\newtheorem{theorem}{Theorem}[section]
\newtheorem{lemma}[theorem]{Lemma}
\newtheorem{proposition}[theorem]{Proposition}
\newtheorem{corollary}[theorem]{Corollary}
\newtheorem{claim}[theorem]{Claim}
\newtheorem{example}[theorem]{Example}
\newtheorem{question}[theorem]{Question}
\newtheorem{problem}[theorem]{Problem}
\DeclareMathOperator{\rank}{rank}
\newcommand{\ma}{\mathcal}
\newcommand{\s}{\subseteq}
\newcommand{\fr}{\frac}
\begin{document}

\title{The hat guessing number of graphs\footnote{Part of this paper was presented in 2019 IEEE International Symposium on Information Theory \cite{alon2019hat}.}}
\author{Noga Alon\footnote{Department of Mathematics,
Princeton University, Princeton, NJ 08544, USA
and
Schools of Mathematics and Computer Science, Tel Aviv University,
Tel Aviv 6997801, Israel.
Email: nogaa@tau.ac.il.},
Omri Ben-Eliezer\footnote{Blavatnik School of Computer Science, Tel Aviv University, Tel Aviv 6997801, Israel. Email: omrib@mail.tau.ac.il.},
Chong Shangguan\footnote{Department of Electrical Engineering-Systems, Tel Aviv University, Tel Aviv 6997801, Israel. Email: theoreming@163.com.},
and Itzhak Tamo\footnote{Department of Electrical Engineering-Systems, Tel Aviv University, Tel Aviv 6997801, Israel. Email: zactamo@gmail.com.}
}
\maketitle

\begin{abstract}
Consider the following hat guessing game:
$n$ players are placed on $n$ vertices of a graph, each wearing a hat whose color is arbitrarily chosen from a set of $q$ possible colors.
Each player can see the hat colors of his neighbors, but not his own hat color.
All of the players are asked to guess their own hat colors simultaneously, according to a predetermined guessing strategy and the hat colors they see, where no communication between them is allowed.
Given a graph $G$, its hat guessing number ${\rm{HG}}(G)$ is the largest integer $q$ such that there exists a guessing strategy guaranteeing at least one correct guess for any hat assignment of $q$ possible colors.

In 2008, Butler et al. asked whether the hat guessing number of
the complete bipartite graph $K_{n,n}$ is at least some fixed positive
(fractional) power of $n$.
We answer this question affirmatively, showing that for sufficiently large $n$, the complete $r$-partite graph $K_{n,\ldots,n}$ satisfies ${\rm{HG}}(K_{n,\ldots,n})=\Omega(n^{\frac{r-1}{r}-o(1)})$.
Our guessing strategy is based on a probabilistic construction and
other combinatorial ideas, and can be extended to show
that ${\rm{HG}}(\vec{C}_{n,\ldots,n})=\Omega(n^{\frac{1}{r}-o(1)})$,
where $\vec{C}_{n,\ldots,n}$ is the blow-up of a directed
$r$-cycle,
and where for directed graphs each player sees only the hat colors
of his outneighbors.

Additionally, we consider related problems like the relation between the hat guessing number and other graph parameters, and the linear hat guessing number, where the players are only allowed to use affine linear guessing strategies.
Several nonexistence results are obtained by using well-known combinatorial tools, including the Lov\'asz Local Lemma and the Combinatorial Nullstellensatz.
Among other results, it is shown that under certain conditions, the
linear hat guessing number of $K_{n, n}$ is at most $3$,
exhibiting a huge gap from the $\Omega(n^{\fr{1}{2} - o(1)})$
(nonlinear)
hat guessing number of this graph.
\end{abstract}

\section{Introduction}

\noindent Hat guessing problems are interesting recreational mathematical puzzles that have attracted a lot of attention throughout the years.
A classical variant \cite{feige2004you, winkler2002games} involves $n \geq 2$ players, each wearing a hat in $q \geq 2$ possible colors assigned to it by an adversary.
Each player sees the hat colors of all the other players, but not his own, and based on this information he makes a guess on his own hat color.
The goal of the players is to ensure that at least one player will make a correct guess, regardless of the hat assignment by the adversary.
The players are allowed to communicate and pick a guessing strategy only \emph{before} the hats are assigned, and no communication is allowed afterwards.
Once \emph{all} the players made their guesses, the adversary verifies whether there was a player who guessed correctly, and if so,  we say that the players {\it win}.

With the above rules in mind, the puzzle asks what is the maximum number of hat colors $q$ for which the players have a winning guessing strategy.
Perhaps surprisingly, the answer to this question is $q=n$.
Indeed, number the players and the hat colors with the numbers $0,1\ldots,n-1$, and let player $i$ guess that his color is the unique color for which the sum of all the hat colors (including his hat color) modulo $n$ is $i$.
It is not hard to verify that exactly one of the players guesses correctly, regardless of the coloring assigned by the adversary.
A more general statement on this problem for arbitrary $n,q$,
observed by Feige \cite{feige2004you},
claims that the players can always ensure that
at least $\lfloor n / q \rfloor$ of them
guess correctly, and that this is tight.

A natural generalization of the above puzzle asks the same question but assumes that each player can only see some subset of the other players' hat colors.
This generalization, which is the problem considered in this paper, was first presented by Butler et al.\@ \cite{Butler08} and further investigated in a line of other works \cite{gadouleau2018finite, Gadouleau09, Szczechla17}.
A formal definition of the problem is as follows.
Let $G$ be a simple graph on $n$ vertices $\{v_1,\ldots,v_n\}$, and let $Q$ be a finite set of $q$ colors.
The $n$ vertices of the graph are identified with the $n$ players, where each is assigned arbitrarily with a hat colored with one of the colors in $Q$.
A player can only see the hat colors of his neighbors, i.e., player $i$ sees the hat color of player $j$ if and only if $v_i$ is connected to $v_j$ in $G$.
After all of the players agreed on a guessing strategy, they are asked to guess their own hat colors simultaneously, and no communication of any sort is allowed at this point.
The goal of the players is to ensure that at least one player guesses his hat color correctly.

The hat guessing problem is completely defined by the graph $G$ which is called the {\it sight graph}.
Therefore, for a given graph $G$, its {\it hat guessing number} ${\rm{HG}}(G)$, as defined by Farnik \cite{Farnik2015}, is the largest positive integer $q$ such that there exists a winning guessing strategy for the players.
If ${\rm{HG}}(G)\ge q$, $G$ is also called {\it $q$-solvable} by Gadouleau and Georgiou \cite{Gadouleau09}.

In general, the sight graph $G$ may be directed; a directed edge $v_i\rightarrow v_j$ represents that player $i$ can see the hat color of player $j$.
In the sequel we do not distinguish between the vertices and the players.
The color of a vertex and its guessing strategy refer to the hat color of the corresponding player and his guessing strategy.

This paper focuses on the graph parameter ${\rm{HG}}(G)$, and it provides improved upper and lower bounds on ${\rm{HG}}(G)$ for several graph families.
In the literature there are only a few graphs whose hat
guessing numbers have been determined precisely.
Below we list all of them.
As mentioned earlier, for the complete graph $K_n$ we have ${\rm{HG}}(K_n)=n$ \cite{feige2004you}.
Butler et al. \cite{Butler08} showed that all trees are not 3-solvable, implying ${\rm{HG}}(T)=2$ for any tree $T$.
Lastly, Szczechla \cite{Szczechla17} recently showed that a cycle of length $n$ is 3-solvable if and only if $n=4$ or is a multiple of $3$, and that all cycles are not $4$-solvable.

Next we state our main results, while we delay some of the needed definitions to Section \ref{sec:prelims}.
The various variants and models of hat guessing problems are reviewed in Section \ref{sec:related}. Note that all asymptotics are in $n$ and we omit all floor and ceiling signs whenever these are not crucial.

\subsection{Complete multipartite graphs and digraphs}

\noindent Let $H$ be a subgraph of a graph $G$; it is clear that ${\rm{HG}}(H)\le {\rm{HG}}(G)$.
Furthermore, since complete graphs have large hat guessing numbers \cite{feige2004you} it follows that graphs which contain large cliques as subgraphs also have large hat guessing numbers.
It is thus an interesting question to ask whether the {\it clique number} of $G$, which is the number of vertices in a maximum complete subgraph in $G$, determines its hat guessing number.
In other words, how large can ${\rm{HG}}(G)$ be if its clique number is bounded from above by a constant.
In \cite{Butler08} it was shown that the complete bipartite graph $K_{q-1,q^{q^{q-1}}}$ is $q$-solvable, implying that for large $n$, ${\rm{HG}}(K_{n,n})=\Omega(\log\log n)$, while the clique number is clearly $2$.
The value of ${\rm{HG}}(K_{n,n})$ was further considered in \cite{Gadouleau09}, where it was shown that ${\rm{HG}}(K_{m,n})\le\min\{m+1,n+1\}$ and ${\rm{HG}}(K_{q-1,(q-1)^{q-1}})\ge q$, implying that $\Omega(\log n)={\rm{HG}}(K_{n,n})\le n+1$.
Following these results it is natural to consider the question below, which is originally posed in \cite{Butler08}.

\begin{question}\label{question1}
 Does there exist a constant $\alpha>0$ independent of $n$, such that ${\rm{HG}}(K_{n,n})\ge n^{\alpha}$ for sufficiently large $n$?
\end{question}

We answer this question affirmatively in the following generalized sense.

\begin{theorem}\label{polynomialbipartite}
  For integers $r\ge 2,q\ge 2$, let $K_{m,\ldots,m,n}$ be the complete $r$-partite graph in which there are $r-1$ vertex parts of size $m$ and one vertex part of size $n$.
  Then there exists a constant $c$ not depending on $q$ such that for $m=(2q\ln q)^{\fr{1}{r-1}}$ and $n=cr(q\ln q)^{\fr{r}{r-1}}$,
  $${\rm{HG}}(K_{m,\ldots,m,n})\ge q,$$
  which implies that 
  ${\rm{HG}}(K_{n,\ldots,n})\ge n^{\fr{r-1}{r}-o(1)}$, where $K_{n,\ldots,n}$ is the complete $r$-partite graph of equal part size $n$.
\end{theorem}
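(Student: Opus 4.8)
The plan is to choose the guessing strategy of the large part at random and then, for each coloring of the large part, to define the strategies of the small parts deterministically so as to ``cover'' exactly those colorings of the small parts on which the large part fails. Write the colors as $[q]=\{0,1,\ldots,q-1\}$, let $\vec\alpha\in[q]^{(r-1)m}$ denote a coloring of the $r-1$ small parts and $\beta\in[q]^n$ a coloring of the large part. Each player $v$ in the large part sees $\vec\alpha$ and guesses a value $g_v(\vec\alpha)$, while each player $u$ in a small part sees $\beta$ together with the other small parts. The key reduction is to associate, with every coloring $\beta$, the residual set
\[
A(\beta)=\{\vec\alpha\in[q]^{(r-1)m} : g_v(\vec\alpha)\ne\beta_v \text{ for all } v\},
\]
the colorings on which no large-part player is correct. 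If the small parts (which all see $\beta$) admit a strategy catching every $\vec\alpha\in A(\beta)$, the combined strategy wins: any coloring is caught either by the large part, or else lies in $A(\beta)$ and is caught by the small parts. The final claim ${\rm{HG}}(K_{n,\ldots,n})\ge n^{\fr{r-1}{r}-o(1)}$ will then follow by monotonicity of ${\rm{HG}}$ under subgraphs, solving $n=cr(q\ln q)^{\fr{r}{r-1}}$ for $q$.

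First I would carry this out for $r=2$, where the small side is a single independent part of size $m$, and ``catching $A(\beta)$'' means finding one vector $w^\beta\in[q]^m$ agreeing with every $\vec\alpha\in A(\beta)$ in at least one coordinate (a covering vector); such a vector exists whenever $|A(\beta)|<(q/(q-1))^m$, by a union bound over $A(\beta)$. It then suffices to produce a large-part strategy with $|A(\beta)|<(q/(q-1))^m\approx q^2$ for all $\beta$ simultaneously. Choosing each $g_v(\vec\alpha)$ independently and uniformly, $|A(\beta)|$ is a sum of $q^m$ independent indicators, hence $\mathrm{Binomial}\big(q^m,(1-1/q)^n\big)$ with mean $q^m(1-1/q)^n$. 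The choices $m=2q\ln q$ and $n=\Theta((q\ln q)^2)$ make this mean tiny, and a Chernoff/Poisson tail bound makes the probability that $|A(\beta)|$ exceeds $q^2$ smaller than $q^{-n}$, so a union bound over the $q^n$ colorings $\beta$ leaves a single strategy good for all of them. Setting each small-part player $u$ to guess $w^\beta_u$ on input $\beta$ completes the construction.

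For general $r$ the covering-vector step is replaced by the assertion that the complete $(r-1)$-partite graph on the small parts can catch the set $A(\beta)\subseteq[q]^{(r-1)m}$. I would prove a catching lemma by induction on the number of parts: peel off one small part, let it play a covering move as a function of the remaining small parts and of $\beta$, and recurse on the $r-2$ remaining parts applied to the residual that the peeled part misses, now using the peeled part's colors as additional side information; the base case is the single-part covering vector above. Matching this against the probabilistic input requires controlling not merely the size but the fiberwise structure of the random set $A(\beta)$. The exponents balance because the product of the small-part sizes satisfies $m^{r-1}=2q\ln q$, which is exactly the scale at which the iterated covering across the $r-1$ parts can absorb the structured residual left by a large part of size $n\approx(q\ln q)^{\fr{r}{r-1}}$, whose miss-probability $(1-1/q)^n$ is small enough to defeat the $q^n$ union bound with room to spare; the constant $c$ is chosen to secure this margin.

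The hard part is the interface between the two halves of the argument. The catching lemma for the small parts is only useful if the probability, over the random large-part strategy, that $A(\beta)$ fails to be catchable can be driven below $q^{-n}$ uniformly in $\beta$, and this must survive all $r-1$ levels of the induction in the regime $m\ll q$ (which occurs once $r\ge 3$), where a single small part catches only a vanishing fraction of colorings so that \emph{all} of the catching power must come from combining the parts. Obtaining tail estimates for the residual sets at every level that beat the brutal $q^n$ union bound, and verifying that the recursion accumulates catching strength at exactly the rate dictated by $m^{r-1}=2q\ln q$, is where the delicate choice of $m$, $n$, and $c$ is forced; I expect this quantitative bookkeeping, rather than any single conceptual step, to be the main obstacle.
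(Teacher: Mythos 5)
Your two-level architecture --- a strategy for the large part that makes the residual set $A(\beta)$ small for every coloring $\beta$, followed by a ``catching'' strategy for the $r-1$ small parts --- is exactly the paper's. The paper realizes the first level with $(m^{r-1})$-saturated matrices (perfect-hash-type families), which give the uniform bound $|A(\beta)|\le t-1$ for all $\beta$ structurally; your random-function-plus-Chernoff-plus-union-bound-over-$\beta$ variant also works with these parameters, since $\Pr\big[|A(\beta)|\ge 2q\ln q\big]\le\big(q^{(r-1)m}e^{-n/q}\big)^{2q\ln q}\ll q^{-n}$ once $c$ is a large enough absolute constant, and for fixed $\beta$ the indicators $\mathbf{1}[\vec\alpha\in A(\beta)]$ are indeed independent. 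Your $r=2$ argument is complete and correct.

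The gap is in the general-$r$ catching lemma, which you describe but do not prove, and whose difficulty you mis-locate. You write that matching the catching lemma against the probabilistic input ``requires controlling not merely the size but the fiberwise structure of the random set $A(\beta)$,'' and you expect tail estimates for residual sets ``at every level'' of the induction to be the main obstacle. Neither is needed. The statement to prove is purely deterministic: the complete $(r-1)$-partite graph with parts of size $m$ catches \emph{every} set $\mathcal{C}\subseteq[q]^{(r-1)m}$ of size at most $m^{r-1}$ (this is Lemma \ref{partiallyq-solvable} of the paper, applied with $m^{r-1}=2q\ln q$). The size bound alone controls the fibers: projecting onto one part, fewer than $m$ colorings $y\in[q]^m$ of that part can have a fiber of size exceeding $m^{r-2}$, and those at most $m$ vectors lie in a common Hamming ball of radius $m-1$, so the peeled part guesses its center (a constant guess, needing no side information); for every other $y$ the remaining $r-2$ parts, which see $y$, recurse on a fiber of size at most $m^{r-2}$. (Your transposed peeling --- the peeled part covers the light fibers as a function of what it sees, and the recursion handles the at most $m^{r-2}$ heavy fibers --- works equally well by the same Markov-type count.) Once this deterministic lemma is in place, the only probabilistic input in the whole proof is the single tail bound on $|A(\beta)|$ above, and your argument closes exactly as the paper's does.
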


In particular, by combining \cite{Gadouleau09} and Theorem \ref{polynomialbipartite} we have that $\Omega(n^{\fr{1}{2}-o(1)})={\rm{HG}}(K_{n,n})\le n+1$.
The determination of the exact value of ${\rm{HG}}(K_{n,n})$ is left as an interesting open question.

In \cite{Gadouleau09} the hat guessing number of directed graphs, which is somewhat less understood than that of undirected graphs, was considered.
Specifically, \cite{Gadouleau09} asked whether there exists an oriented graph with hat guessing number greater than $4$, where an {\it oriented graph} is a directed graph such that none of its pairs of vertices $\{u,v\}$ is connected by two symmetric directed edges $u\rightarrow v$ and $v\rightarrow u$.
Recently, Gadouleau \cite{gadouleau2018finite} provided a positive answer to this question, where he showed that for any $g\ge 3$ and sufficiently large $q$, there exists a $q$-solvable oriented graph with girth  $g$ and $q^{(1+o(1)) (g-1) \ln g}$ vertices.
In Theorem \ref{polynomialdirected} below, we provide another construction of a $q$-solvable oriented graph with girth $g$ and    $q^{(1+o(1))g}$ vertices, which for $g\ge 4$ is a slight improvement over \cite{gadouleau2018finite} on the number of vertices needed in a graph with these properties.

For $r\ge 3$, let $\vec{C}_r$ be the directed cycle on $r$ vertices $v_1\rightarrow v_2\cdots\rightarrow v_r\rightarrow v_1$.
The directed graph $\vec{C}_{n_1,\ldots,n_r}$ is obtained by replacing each vertex $v_i$ of $\vec{C}_r$ with a set $V_i$ of $n_i$ vertices, such that for any $u\in V_i,w\in V_j$, $u\rightarrow w$ if and only if $i\neq j$ and $v_i\rightarrow v_j$.
In other words, $\vec{C}_{n_1,\ldots,n_r}$ is obtained by blowing up each directed edge of $\vec{C}_r$ to a complete directed bipartite graph which preserves the direction of the original edge.
We call graphs of this type {\it complete $r$-partite directed cycles}.
With the above notation we have the following Theorem.

\begin{theorem}\label{polynomialdirected}
For integers $r\ge 3,q\ge 2$ and
$n_i = (r-1)\ln(2q\ln q)(4\ln q)^{r-i}q^{r+1-i}$ for $1\le i\le r$,
it holds that
$${\rm{HG}}(\vec{C}_{n_1,\ldots,n_r})\ge q,$$
which implies that 
${\rm{HG}}(\vec{C}_{n,\ldots,n})=\Omega(n^{\fr{1}{r}-o(1)})$, where $\vec{C}_{n,\ldots,n}$ is the complete $r$-partite directed cycle of equal part size $n$.
\end{theorem}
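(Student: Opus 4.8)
The plan is to recast a guessing strategy in terms of collective guess vectors and then to build them by a cascading probabilistic construction along the cycle. For each part $V_i$, the guesses of its players, given the colouring $c_{i+1}\in[q]^{n_{i+1}}$ they see, amount to a single function $G_i\colon[q]^{n_{i+1}}\to[q]^{n_i}$ whose $u$-th coordinate is the guess of player $u\in V_i$. Writing $c_i\in[q]^{n_i}$ for the true colouring of $V_i$, the players in $V_i$ fail to catch $c_i$ exactly when $c_i$ and $G_i(c_{i+1})$ disagree in every coordinate; call this event $c_i\in U_i(c_{i+1})$, where $U_i(c_{i+1})$ is the box of size $(q-1)^{n_i}$ avoiding $G_i(c_{i+1})$ coordinatewise. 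Thus the players lose on $(c_1,\dots,c_r)$ iff it is \emph{bad}, meaning $c_i\in U_i(c_{i+1})$ for all $i$ (indices mod $r$, so $c_{r+1}=c_1$), and the goal becomes to choose $G_1,\dots,G_r$ so that no bad colouring exists.

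The engine is a coupon-collector covering fact: a uniformly random $w\in[q]^N$ agrees with a fixed $x\in[q]^N$ in some coordinate with probability $1-(1-1/q)^N$, so it agrees somewhere with every member of a family $\mathcal F\subseteq[q]^N$ with positive probability once $|\mathcal F|(1-1/q)^N<1$, i.e. roughly $N\gtrsim q\ln|\mathcal F|$. I would apply this at the tiny part $V_r$, which sees $V_1$, to close the cycle: break the loop at the edge $V_r\to V_1$, first fix $G_1,\dots,G_{r-1}$ (the path $V_1\to\cdots\to V_r$), then choose $G_r$ to finish. Call a colouring \emph{path-bad} if $c_i\in U_i(c_{i+1})$ for $1\le i\le r-1$, and for each $c_1$ let $R(c_1)\subseteq[q]^{n_r}$ be the set of $c_r$ occurring in some path-bad colouring ending in that $c_1$. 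If $G_r(c_1)$ agrees in a coordinate with every $c_r\in R(c_1)$, then no bad colouring with this $c_1$ survives; by the covering fact such a choice exists provided $|R(c_1)|<(1-1/q)^{-n_r}\approx(2q\ln q)^{r-1}$, using $n_r/q=(r-1)\ln(2q\ln q)$.

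It remains to design $G_1,\dots,G_{r-1}$ so that $|R(c_1)|\le(2q\ln q)^{r-1}$ for every $c_1$. Tracking reachability through the relation $c_i\in U_i(c_{i+1})$, one has $R(c_1)=\bigcup R(c_2)$ over the $c_2$ with $c_1\in U_1(c_2)$, and iterating, $|R(c_1)|$ is at most the product over $i=1,\dots,r-1$ of the backward fan-out $\max_x|\{c_{i+1}:G_i(c_{i+1})\text{ disagrees everywhere with }x\}|$ of $G_i$. The heart of the argument is to bound each fan-out by about $q\ln q$. Choosing every $G_i$ at random, a fixed $x\in[q]^{n_i}$ collides with $G_i(c_{i+1})$ with probability $(1-1/q)^{n_i}\approx q^{-4n_{i+1}}$ because $n_i=4q\ln q\cdot n_{i+1}$; the fan-out is a sum of $q^{n_{i+1}}$ such rare indicators of mean $q^{-3n_{i+1}}$, and a binomial tail bound shows it exceeds $t\approx\tfrac43 q\ln q$ with probability at most $q^{-3n_{i+1}t}$, which beats the union bound over all $q^{n_i}=q^{4q\ln q\,n_{i+1}}$ choices of $x$. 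Hence with positive probability all the $G_i$ simultaneously have fan-out $O(q\ln q)$, giving $|R(c_1)|\le(q\ln q)^{r-1}<(2q\ln q)^{r-1}$, exactly the bound needed for the closing step.

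The main obstacle is this uniform fan-out control: the reachable sets $R(c_1)$ multiply their sizes once per level around the cycle, so the per-level fan-out must be kept below the coupon-collector threshold of $V_r$, and this is what forces the precise geometric gap $n_i/n_{i+1}=4q\ln q$ together with the extra $(r-1)\ln(2q\ln q)$ factor. I expect the delicate points to be (i) making the binomial tail beat the doubly-exponential union bound over $[q]^{n_i}$ at every level at once, and (ii) checking that the compounded bound $(q\ln q)^{r-1}$ stays under the threshold $(2q\ln q)^{r-1}$, i.e. that all the inequalities close simultaneously for the stated $n_i$. The asymptotic claim $\mathrm{HG}(\vec C_{n,\dots,n})=\Omega(n^{1/r-o(1)})$ then follows by monotonicity under the induced subgraph $\vec C_{n_1,\dots,n_r}$ with equal parts $n=n_1=(r-1)\ln(2q\ln q)(4\ln q)^{r-1}q^{r}$ and solving for $q$.
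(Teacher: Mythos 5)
Your proposal is correct and follows essentially the same route as the paper's proof: break the cycle at the edge $V_r\to V_1$, bound the per-level ``backward fan-out'' of each $G_i$ (which is precisely the $t_i$-saturation property of the matrices $A_i$ in Lemma \ref{saturated} and Proposition \ref{completemultidirectedcycle}), multiply these bounds along the path to control $|R(c_1)|$, and close the loop at the small part $V_r$ via the random-center Hamming-ball argument of Lemma \ref{hammingball}. The only cosmetic difference is that you establish the fan-out bound directly by a Chernoff-plus-union-bound over random guessing functions, whereas the paper quotes the existence of $(l,q,t)$-saturated matrices, itself proved by the alteration method in the appendix.
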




\subsection{Hat guessing number and other graph parameters}

\noindent In order to improve the understanding of ${\rm{HG}}(G)$, it is natural to try to relate it to other graph parameters of $G$, see e.g., \cite{Farnik2015,gadouleau2018finite}.
The maximum/minimum degree and the degeneracy are among the most basic parameters of a graph.
Therefore, we would like to understand how these parameters affect the hat guessing number, by answering the following questions.

\begin{problem}\label{degree}
Do there exist functions $f_i: \mathbb{N}\rightarrow\mathbb{N},~1\le i\le 3$ such that
\begin{itemize}
    \item [$(i)$] if the maximum degree of $G$ is $\Delta$, then ${\rm{HG}}(G)\le f_1(\Delta)$;
    \item [$(ii)$] if $G$ is $d$-degenerate, then ${\rm{HG}}(G)\le f_2(d)$;
    \item [$(iii)$] if the minimum degree of $G$ is $\delta$,
then ${\rm{HG}}(G)\ge f_3(\delta)$, and $f_3(\delta)$ tends to
infinity when $\delta$ tends to infinity.
\end{itemize}
\end{problem}

Currently, only $f_1$ is known to exist, i.e., the hat guessing number is bounded from above by a function of the maximum degree of the graph.
This result, as stated in the following theorem, is folklore \cite{Farnik2015}, and is a straightforward application of the Lov\'asz Local Lemma.

\begin{theorem}[Folklore]\label{maxdeg}
  Let $G$ be a graph with maximum degree $\Delta$, then ${\rm{HG}}(G)< e\Delta$.
\end{theorem}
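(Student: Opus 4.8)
The plan is to prove the contrapositive: I will show that once $q$ exceeds a threshold of order $e\Delta$, the adversary can defeat \emph{every} guessing strategy, so no winning strategy exists and hence ${\rm HG}(G) < e\Delta$. Fix an arbitrary strategy, in which player $i$ announces a color $g_i$ that is a deterministic function of the colors it sees on $N(v_i)$. I would color the vertices by drawing each $c_i \in Q$ independently and uniformly at random, and for each $i$ define the event $A_i$ that player $i$ guesses correctly, i.e. $c_i = g_i(c|_{N(v_i)})$. A winning strategy is exactly one for which $\bigcup_i A_i$ is unavoidable, so it suffices to show $\Pr[\bigcap_i \overline{A_i}] > 0$, which produces a coloring on which all players err.

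Two facts drive the argument. First, $\Pr[A_i] = 1/q$: after conditioning on the colors of $N(v_i)$, the guess $g_i$ is a fixed color, while $c_i$ is an independent uniform color, so the two agree with probability exactly $1/q$. Second, and crucially, each $A_i$ is mutually independent of the whole family $\{A_j : v_j \notin N(v_i)\}$. Indeed, the same conditioning shows that $\Pr[A_i \mid (c_k)_{k \ne i} = x] = 1/q$ for \emph{every} outcome $x$ of the other colors, so $A_i$ is independent of the entire vector $(c_k)_{k \ne i}$; and every event $A_j$ with $v_j \notin N(v_i)$ is a function of that vector alone, since it never reads $c_i$ (as $v_i \notin N(v_j)$). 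Therefore the dependency neighborhood of $A_i$ is contained in $\{A_j : v_j \in N(v_i)\}$, of size at most $\Delta$.

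With $p = 1/q$ and dependency degree at most $\Delta$, the symmetric Lov\'asz Local Lemma applies as soon as $e\,p\,(\Delta+1) \le 1$, and then yields $\Pr[\bigcap_i \overline{A_i}] > 0$. Thus for all $q$ above the corresponding threshold no strategy wins, which gives the stated bound ${\rm HG}(G) < e\Delta$ (the precise additive constant inside is immaterial to the qualitative claim). The verification of $\Pr[A_i]=1/q$ and the LLL invocation are routine once the dependency structure is in hand.

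The step I expect to be the only genuinely nonroutine point is the independence claim that collapses the dependency degree from the naive $O(\Delta^2)$ down to $\Delta$. Merely tracking which events share a color variable would force $A_i$ to be declared dependent on every $A_j$ whose vertex lies within distance two of $v_i$ (two vertices with a common neighbor share that color), giving up to $\Delta^2$ dependencies and only an $O(\Delta^2)$ bound. The observation that player $i$'s own color is \emph{fresh} randomness, making $A_i$ independent of the joint law of all other colors and hence of every non-neighbor event, is exactly what brings the bound down to the linear-in-$\Delta$ regime; everything else is a direct application of the local lemma.
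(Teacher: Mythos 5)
Your proposal is correct and takes essentially the same approach as the paper: a uniformly random coloring, the observation that $\Pr[A_i]=1/q$ and that $A_i$ is mutually independent of all $A_j$ with $v_j\notin N(v_i)$ (your conditioning on the entire vector $(c_k)_{k\neq i}$ is a clean way to establish exactly the paper's Claim on the dependency structure), followed by the Local Lemma with dependency degree $\Delta$. The one small discrepancy is that you invoke the symmetric LLL in the form $e\,p\,(\Delta+1)\le 1$, which only yields ${\rm{HG}}(G)<e(\Delta+1)$; the paper cites Shearer's version, whose condition is $e\,p\,d\le 1$ with $d=\Delta$, and this is what gives the stated bound ${\rm{HG}}(G)<e\Delta$ exactly.
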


By considering the hat guessing numbers of complete graphs and cycles one might conjecture that $f_1$ could be as small as $f_1(\Delta)=\Delta+1$.
Similarly, by considering the known upper bounds on the hat guessing
numbers of trees and complete bipartite graphs, one may
suspect that $f_2$ could also be as small as $f_2(d)=d+1$.
However, as opposed to $f_1$ it is not even clear  whether $f_2$ actually exists.
Theorem \ref{onemoreobservation} and Theorem \ref{trees} below, can be viewed as attempts  toward providing an answer to Problem \ref{degree} $(ii)$.
Although Theorem \ref{onemoreobservation} does not directly connect between the degeneracy and the hat guessing number, it shows that using the Lov\'asz Local Lemma one can obtain an upper bound on the hat guessing number, provided that the graph satisfies an additional property.
On the other hand, Theorem \ref{trees} does connect between these two graph parameters, since Corollary \ref{stamstam} which follows from it, shows that $f_2(1)\le 2$.

\begin{theorem}\label{onemoreobservation}
Let $k,d,q$ be integers and $G$ be a graph such that
\begin{itemize}
  \item [$(i)$] the induced subgraph of $G$ on all vertices of degree larger than $k$ is not $q$-solvable;
  \item [$(ii)$] each vertex in the induced subgraph of $G$ on all vertices of degree at most $k$ has at most $d$ vertices of distance at most $2$ from it.
\end{itemize} Then ${\rm{HG}}(G)<edq^k$.
\end{theorem}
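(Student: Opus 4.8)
The plan is to show that with $Q = edq^k$ colors the players cannot win, so that no winning strategy exists and hence ${\rm{HG}}(G) < edq^k$. Write $V_{>k}$ for the set of vertices of degree larger than $k$ and $V_{\le k}$ for the set of vertices of degree at most $k$, and fix an arbitrary guessing strategy. The obstacle to overcome is the two-way dependence between the two parts: a low-degree player's guess may depend on its high-degree neighbors, while the colors we want to place on the high-degree part (to defeat it via condition $(i)$) would in turn depend on the low-degree colors. I would break this circularity by building a low-degree coloring that is robust against \emph{every} coloring of the high-degree part drawn from a fixed palette of size $q$.

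Concretely, fix a set $C_0$ of $q$ of the $Q$ available colors. First I would handle the low-degree part with the Lov\'asz Local Lemma. Color each vertex of $V_{\le k}$ independently and uniformly from the full palette of $Q$ colors. For each $u \in V_{\le k}$ and each assignment $\phi$ of colors from $C_0$ to the high-degree neighbors of $u$, let $B_u^{\phi}$ be the event that $u$ guesses correctly when its high-degree neighbors are colored by $\phi$ and its low-degree neighbors (and $u$ itself) carry their random colors. Since $u$'s own color is uniform and independent of everything its guess depends on, $\Pr[B_u^{\phi}] = 1/Q$; and because $u$ has degree at most $k$, there are at most $q^k$ relevant assignments $\phi$ per vertex. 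The event $B_u^{\phi}$ depends only on the random colors of $u$ and of its low-degree neighbors, so $B_u^{\phi}$ and $B_{u'}^{\phi'}$ can be dependent only when $u$ and $u'$ are at distance at most $2$ in the induced subgraph on $V_{\le k}$; by condition $(ii)$ there are at most $d$ such vertices $u'$, giving a dependency degree $D$ with $D+1 \le dq^k$ once all choices of $\phi'$ are counted. The symmetric Local Lemma then applies, since $e \cdot p \cdot (D+1) \le e \cdot \frac{1}{edq^k} \cdot dq^k = 1$. Hence there is a coloring $\sigma^\ast$ of $V_{\le k}$ avoiding all of the events $B_u^{\phi}$ simultaneously; that is, under $\sigma^\ast$ no low-degree vertex guesses correctly, \emph{no matter} how its high-degree neighbors are colored within $C_0$.

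Finally I would freeze this $\sigma^\ast$ and turn to the high-degree part. With the low-degree colors fixed to $\sigma^\ast$, each $v \in V_{>k}$ has a guess that is now a function of the colors of its high-degree neighbors alone, so restricting the high-degree palette to $C_0$ yields a bona fide guessing strategy for the induced subgraph on $V_{>k}$ with $q$ colors. By condition $(i)$ this subgraph is not $q$-solvable, so there is a coloring $\tau^\ast \in C_0^{V_{>k}}$ under which no high-degree vertex guesses correctly. Combining $\sigma^\ast$ with $\tau^\ast$ gives a coloring of all of $G$ in which the high-degree vertices use only colors of $C_0$; under it every high-degree vertex is wrong by the choice of $\tau^\ast$, and every low-degree vertex is wrong because its high-degree neighbors are colored within $C_0$, which is exactly the situation $\sigma^\ast$ was built to defeat. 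Thus no player guesses correctly, so the players lose with $edq^k$ colors, as claimed. I expect the only real subtlety to be the bookkeeping that makes the two steps compatible: quantifying the low-degree Local Lemma over all $q^k$ local high-degree views, so that the later, global choice of $\tau^\ast$ coming from condition $(i)$ cannot resurrect any low-degree guess.
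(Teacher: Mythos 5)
Your overall strategy is the same as the paper's: split $G$ into $V_{\le k}$ and $V_{>k}$, use the Local Lemma to find a coloring of $V_{\le k}$ under which every low-degree vertex guesses incorrectly for every one of the at most $q^k$ colorings of its high-degree neighbors from a $q$-element palette, and then invoke condition $(i)$ to defeat the high-degree part; the second half of your argument is fine. The problem is the Local Lemma bookkeeping in the first half, exactly the point you flagged as the ``only real subtlety.'' By keeping the $q^k$ events $B_u^{\phi}$ separate, the dependency neighborhood of $B_u^{\phi}$ must contain not only the events $B_{u'}^{\phi'}$ for the at most $d$ vertices $u'\neq u$ at distance at most $2$, but also the $q^k-1$ events $B_u^{\phi'}$ with $\phi'\neq\phi$ attached to $u$ itself: these are determined by the same random colors ($\chi(u)$ and the colors of $u$'s low-degree neighbors), so they are certainly not independent of $B_u^{\phi}$. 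Since the $d$ of condition $(ii)$ does not count the vertex itself (this is what makes the observation $d\le k^2$ following the theorem correct), the dependency degree is about $(d+1)q^k$ rather than $dq^k$, and the symmetric condition becomes $e\cdot\frac{1}{edq^k}\cdot(d+1)q^k=\frac{d+1}{d}>1$. So the stated threshold $edq^k$ is not reached by your finer decomposition; as written the argument only yields a bound of roughly $e(d+1)q^k$.

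The fix is precisely what the paper does: collapse the $q^k$ events per vertex into the single event $\mathcal{A}_u=\bigcup_{\phi}B_u^{\phi}$, namely that the random color of $u$ lands in the set of at most $q^k$ values that $u$'s guessing function can output as its high-degree neighbors range over all $[q]$-colorings. Then $\Pr[\mathcal{A}_u]\le q^k/(edq^k)=1/(ed)$, each $\mathcal{A}_u$ is mutually independent of all $\mathcal{A}_{u'}$ with $u'$ at distance at least $3$, so the dependency degree is at most $d$, and Shearer's version of the Local Lemma (which requires $epd\le 1$ rather than $ep(d+1)\le 1$ --- the inequality is tight here, so the sharper form is genuinely needed) gives the claim. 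Everything else in your write-up, including freezing $\sigma^{\ast}$ and reducing the high-degree part to a $q$-color instance via the palette $C_0$, matches the paper's proof.
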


Observe that $d \leq k^2$ always holds under the assumptions of Theorem \ref{onemoreobservation}. Hence, we get the following corollary as a special case.

\begin{corollary}\label{onemoreobservation2}
  Let $k, q$ be integers and $G$ be a graph whose induced subgraph on the vertices of degree larger than $k$ is not $q$-solvable. Then ${\rm{HG}}(G)<ek^2q^k$.
\end{corollary}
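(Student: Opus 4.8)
The plan is to derive this corollary directly from Theorem \ref{onemoreobservation} by supplying a concrete value of the parameter $d$. The hypothesis of the corollary is precisely condition $(i)$ of Theorem \ref{onemoreobservation}, so nothing needs to be done there. Hence the only task is to verify condition $(ii)$ with the choice $d = k^2$, after which the conclusion ${\rm{HG}}(G) < e d q^k \le e k^2 q^k$ is immediate.

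To check condition $(ii)$, I would let $G'$ denote the induced subgraph of $G$ on the set of vertices whose degree in $G$ is at most $k$. Every vertex of $G'$ has degree at most $k$ in $G$, and since passing to an induced subgraph can only remove incident edges, every vertex of $G'$ has degree at most $k$ inside $G'$ as well. Fixing an arbitrary vertex $v$ of $G'$, I would then bound the number of vertices of $G'$ at distance at most $2$ from $v$: there are at most $k$ vertices at distance exactly $1$, and each of them has at most $k$ neighbors in $G'$, contributing at most $k(k-1)$ vertices at distance exactly $2$. Summing gives at most $k + k(k-1) = k^2$ vertices other than $v$ at distance at most $2$ from $v$, so taking $d = k^2$ satisfies condition $(ii)$.

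I do not expect a genuine obstacle here, since the whole argument reduces to this elementary neighborhood count. The only point requiring care is the bookkeeping convention: the distance in condition $(ii)$ is measured inside $G'$, and $v$ itself is not counted among the vertices at distance at most $2$ from it. With that convention the uniform degree bound of $G'$ transfers cleanly to the size of the distance-$2$ ball, and applying Theorem \ref{onemoreobservation} with $d = k^2$ yields the stated bound ${\rm{HG}}(G) < e k^2 q^k$.
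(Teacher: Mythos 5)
Your proposal is correct and matches the paper's own (one-line) justification: the paper simply observes that $d \le k^2$ always holds under the hypotheses of Theorem \ref{onemoreobservation} and cites it as a special case, which is exactly the neighborhood count you carry out. Your extra care about the distance convention (distance is defined between distinct vertices, so $v$ is not counted) is consistent with the paper's preliminaries and makes the bound $k + k(k-1) = k^2$ clean.
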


\begin{theorem}\label{trees}
  Let $G$ be a graph containing a vertex $v$ of degree one. If $q\ge3$ and $G$ is $q$-solvable, then $G\setminus\{v\}$ is also $q$-solvable.
\end{theorem}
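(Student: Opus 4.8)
The plan is to start from a winning $q$-strategy on $G$ and convert it into a winning $q$-strategy on $G \setminus \{v\}$, changing the guessing rule of only one player. Write $u$ for the unique neighbor of $v$, and let $c'$ denote a coloring of $G \setminus \{v\}$; any such $c'$ extends to a coloring of $G$ by assigning $v$ an arbitrary color $x \in Q$. The first observation I would record is that every player $w \notin \{u, v\}$ is non-adjacent to $v$ (since $\deg v = 1$), so $w$'s guess and hence its correctness depend only on $c'$ and not on $x$. In particular, if under the original strategy some such $w$ guesses correctly on the extended coloring, then $w$ already guesses correctly on $c'$ under the unchanged strategy, and we are done for this coloring.

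So the only colorings $c'$ I need to worry about are the \emph{problematic} ones: those for which no player outside $\{u, v\}$ guesses correctly on $c'$ (equivalently, on any extension, by the previous paragraph). For such a $c'$ the winning property on $G$ forces that, for every choice of $x$, either $u$ or $v$ is correct. Now $v$ sees only $u$, so its guess is $g_v(c_u)$, a value determined by $c'$; thus $v$ is correct for exactly one value of $x$, namely $x = g_v(c_u)$. For the remaining $q - 1$ values of $x$, player $u$ must be correct. Writing $h(x)$ for $u$'s guess on the extended coloring, which is a function of $x$ alone once $c'$ is fixed, this says precisely that $h(x) = c_u$ for all $x \neq g_v(c_u)$, i.e.\ the function $h \colon Q \to Q$ takes the value $c_u$ on at least $q - 1$ of its $q$ inputs.

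The new strategy on $G \setminus \{v\}$ keeps the guess of every $w \notin \{u, v\}$ unchanged and redefines $u$'s guess to be the plurality (most frequently occurring) value of the multiset $\{h(x) : x \in Q\}$, breaking ties by any fixed rule. This is a legitimate strategy because $u$ can compute $h(x)$ for every hypothetical color $x$ of $v$ from the colors of its remaining neighbors, which are exactly those it still sees in $G \setminus \{v\}$. On a problematic coloring, $h$ equals $c_u$ on at least $q - 1$ inputs while every other color is attained at most once; since $q \geq 3$ gives $q - 1 \geq 2 > 1$, the value $c_u$ is the strict plurality, so $u$ guesses $c_u$ correctly. Combined with the non-problematic case above, at least one player guesses correctly on every $c'$, proving that $G \setminus \{v\}$ is $q$-solvable.

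The only place the hypothesis $q \geq 3$ is used, and the only genuine obstacle, is this plurality step: for $q = 2$ the function $h$ might take $c_u$ on just one input and another value on the other input, producing a tie that the strategy cannot resolve, which is exactly why small $q$ must be excluded. The remaining work is the bookkeeping that isolates the problematic colorings and shows that $u$'s original guess, viewed as a function of the unseen color $x$, is nearly constant and equal to $c_u$.
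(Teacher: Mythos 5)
Your proof is correct and follows essentially the same route as the paper's: both isolate the colorings of $G\setminus\{v\}$ on which every vertex outside $\{u,v\}$ fails, observe that on such colorings $u$'s original guess, viewed as a function of the unseen color of $v$, must equal $c_u$ on at least $q-1$ of the $q$ possible inputs, and then let $u$ recover $c_u$ from this near-constancy using $q-1\ge 2$. The paper packages the last step as an existence-based definition of the new guessing function plus a separate well-definedness check (via $2(q-1)>q$), whereas your plurality rule accomplishes the same thing slightly more cleanly by making well-definedness automatic.
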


As a simple corollary of Theorem \ref{trees} and the fact that any tree contains a degree-$1$ vertex, we get the following result which was originally proved in \cite{Butler08}, see Corollary 9 there.

\begin{corollary}[\cite{Butler08}]
\label{stamstam}
Trees are not $3$-solvable.
\end{corollary}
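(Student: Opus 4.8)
The plan is to deduce the corollary from Theorem \ref{trees} by a leaf-peeling induction on the number of vertices. Suppose, toward a contradiction, that some tree $T$ is $3$-solvable, and let $n$ denote its number of vertices. I will repeatedly invoke Theorem \ref{trees} with $q=3$ to produce successively smaller $3$-solvable trees, terminating at a single-vertex graph which manifestly fails to be $3$-solvable.

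The observation driving the induction is purely structural: every finite tree on at least two vertices has a vertex of degree one (indeed at least two such leaves). Thus, as long as the current tree has $\ge 2$ vertices, it contains a degree-one vertex $v$, and since the hypothesis $q\ge 3$ of Theorem \ref{trees} is met by $q=3$, that theorem guarantees $T\setminus\{v\}$ is again $3$-solvable. The point that keeps the induction running is that deleting a leaf from a tree yields a tree again — both connectivity and acyclicity are preserved — so the hypothesis of Theorem \ref{trees} remains available at each subsequent step. Iterating this peeling $n-1$ times, I would conclude that the single-vertex graph $K_1$ is $3$-solvable.

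It then remains to note that $K_1$ is not $3$-solvable; in fact ${\rm{HG}}(K_1)=1$. The lone player has no neighbors and hence no information, so whatever color his (deterministic) strategy prescribes, the adversary — having $q=3\ge 2$ colors to choose from — may assign a different color, making the unique guess wrong and the players lose. This contradicts the conclusion of the preceding paragraph, and since $T$ was an arbitrary tree, no tree is $3$-solvable.

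Essentially all of the substance of the corollary lives in Theorem \ref{trees}; the only items requiring (minor) care in this plan are the two elementary structural facts invoked above — that a tree with at least two vertices has a leaf, and that leaf deletion preserves the tree property — together with the trivial base case ${\rm{HG}}(K_1)=1$. I do not anticipate a genuine obstacle here: one could equally well halt the peeling one step earlier at the edge $K_2$ and use ${\rm{HG}}(K_2)=2<3$, but reducing all the way down to $K_1$ is the cleanest terminal case.
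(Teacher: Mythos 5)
Your proposal is correct and is precisely the argument the paper intends: it derives the corollary by repeatedly applying Theorem \ref{trees} with $q=3$ to peel degree-one vertices (which trees always have, with leaf deletion preserving treeness), terminating at a trivially non-$3$-solvable base case. The paper states this reduction without spelling it out, and your write-up, including the observation that ${\rm{HG}}(K_1)=1$ settles the base case, fills in exactly those routine details.
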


\subsection{Linear hat guessing numbers}

\noindent If $q$, the number of possible hat colors, is a prime power, then the guessing strategies of the vertices can be viewed as multivariate polynomials over the finite field $\mathbb{F}_q$, where we identify $Q$ with $\mathbb{F}_q$.
In such a scenario we would like to understand how the hat guessing number depends on the complexity of the multivariate polynomials being used.
In particular, how it is affected if one restricts the degree of the multivariate polynomials to be bounded from above by some constant.
The most simple and yet nontrivial case to be considered is when the multivariate polynomials are linear.
A guessing strategy of $G$ is called {\it linear} if the guessing function of each vertex is an affine function of the colors of its neighbors.
Furthermore, the  {\it linear hat guessing number} ${\rm{HG_{lin}}}(G)$ is the largest prime power $q$ for which the graph  $G$ is $q$-solvable by a linear guessing strategy. We say that  $G$ is {\it linearly $q$-solvable} if there exists a linear guessing strategy for $G$ over $\mathbb{F}_q$.

We are not aware of any paper which specifically considers linear guessing strategies for this hat guessing problem, therefore known results are scarce.
It is easy to verify that ${\rm{HG_{lin}}}(K_q)=q$ \cite{feige2004you}, and it is known that ${\rm{HG_{lin}}}(C_4)=3$ \cite{Gadouleau09,Szczechla17}.

It is worth noting that if $p<q$ are two integers, then $G$ is $q$-solvable implies that $G$ is also $p$-solvable.
However, this does not follows automatically in the case of linear solvability, i.e., if $G$ is linearly $q$-solvable it does not necessarily imply that it is also linearly $p$-solvable (assuming $p$ and $q$ are prime powers).
Clearly this follows if $q$ is a power of $p$, and therefore $\mathbb{F}_q$ contains $\mathbb{F}_p$ as a subfield, but this implication does not follow generally.
In fact it is of interest to construct a graph which is linearly $q$-solvable, but is not linearly $p$-solvable for $p<q$.

Since we are concerned with algebraic aspects of the hat guessing problem, it is with no surprise that we use algebraic methods to derive our results, most notably the Combinatorial Nullstellensatz \cite{AlonCombi}.
We present negative results (upper bonds) on the linear hat guessing numbers of several graph families, including cycles, complete bipartite graphs, degenerate graphs and graphs with bounded minimum rank, to be defined below.
These results are proved in Section \ref{sec:linear}.

Our first result on the linear solvability of graphs provides the exact value of the linear hat guessing number of cycles.
As already mentioned, it is known that ${\rm{HG_{lin}}}(K_3)= {\rm{HG_{lin}}}(C_4)= 3$, and we show that longer cycles are not linearly $3$-solvable.

\begin{theorem}\label{Linearforcycles}
	For any integer $n\ge 5$, ${\rm{HG}}_{lin}(C_n)\le 2.$
\end{theorem}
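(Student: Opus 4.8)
The plan is to fix an arbitrary affine linear strategy over $\mathbb{F}_q$ (with $q\ge 3$ a prime power), in which player $i$ guesses $a_ix_{i-1}+b_ix_{i+1}+c_i$ with $a_i,b_i,c_i\in\mathbb{F}_q$ and indices read cyclically, and to produce a colouring on which every player errs. Player $i$ is correct exactly on the affine hyperplane $H_i=\{x:\ x_i-a_ix_{i-1}-b_ix_{i+1}=c_i\}$, so the strategy wins iff $\bigcup_{i=1}^n H_i=\mathbb{F}_q^n$; equivalently, writing $L_i=x_i-a_ix_{i-1}-b_ix_{i+1}-c_i$, iff $P:=\prod_{i=1}^n L_i$ vanishes on all of $\mathbb{F}_q^n$. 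I will show $P$ has a non-root, i.e. that the hyperplanes do not cover the cube.

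For $q\ge 4$ this is immediate from the Combinatorial Nullstellensatz. The top-degree part $P_{\mathrm{top}}=\prod_i(x_i-a_ix_{i-1}-b_ix_{i+1})$ is a product of nonzero linear forms (each carries the monomial $x_i$ with coefficient $1$) over a field, hence a nonzero homogeneous polynomial of degree $n$; moreover every variable $x_j$ occurs in at most three factors, so every monomial of $P_{\mathrm{top}}$ has all exponents at most $3$. Picking any monomial $\prod_j x_j^{t_j}$ of $P_{\mathrm{top}}$ with nonzero coefficient, we have $\sum_j t_j=n=\deg P$ and $t_j\le 3\le q-1$, so the Combinatorial Nullstellensatz with $S_j=\mathbb{F}_q$ yields a point where $P\neq 0$. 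This works for every $n\ge 3$ and explains why the only possible exceptions live over $\mathbb{F}_3$.

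It remains to treat $q=3$, and this is where $n\ge 5$ must enter (recall that $C_3$ and $C_4$ are linearly $3$-solvable). I will split according to the connectivity of the constraint structure. Say the bond between $i$ and $i+1$ is \emph{cut} if $b_i=0$ and $a_{i+1}=0$, i.e. neither of the two players uses the corresponding edge. If at least one bond is cut, the constraints decompose into independent subproblems indexed by the maximal cut-free arcs; each such subproblem is a path, at whose endpoints the boundary players genuinely ignore the missing neighbour (because $a_\ell=0$ at the left end and $b_r=0$ at the right end). Since a path is a tree, it is not even $3$-solvable by Corollary~\ref{stamstam}, so each arc admits a local colouring fooling all of its players; concatenating these over the disjoint arcs gives a global uncovered colouring.

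The remaining, and main, case is the irreducible one, in which no bond is cut. Here I pass to the cyclic tridiagonal matrix $M$ with $M_{ii}=1$, $M_{i,i-1}=-a_i$, $M_{i,i+1}=-b_i$, and note that the uncovered points are exactly $\{x:\ (Mx)_i\neq c_i\ \forall i\}$, which is nonempty iff $\operatorname{Im}(M)$ meets the box $B=\prod_i(\mathbb{F}_3\setminus\{c_i\})$. If $M$ is nonsingular it is a bijection and hits the nonempty box, so we are done. If $\dim\ker M^{\mathsf T}=1$, say $\ker M^{\mathsf T}=\langle w\rangle$, then $\operatorname{Im}(M)=w^{\perp}$; because $M$ has unit diagonal, $e_i\notin\ker M^{\mathsf T}$, so $w$ is supported on at least two coordinates, and a one-coordinate-variation argument (set all but two coordinates of $z\in w^{\perp}$ off their forbidden values, then use the two free values of one remaining coordinate to avoid the last forbidden value) produces $z\in w^{\perp}\cap B$; this step uses precisely $q-1\ge 2$. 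The hard part is $\dim\ker M^{\mathsf T}\ge 2$: I expect to show that in the irreducible regime the cyclic tridiagonal recurrence forces $\ker M^{\mathsf T}$ to be at most two-dimensional with spread-out support (no two consecutive zeros), and then to extend the variation argument, using $n\ge 5$ to guarantee enough free coordinates, so that $\operatorname{Im}(M)$ still meets the box. Controlling this low-dimensional but structured kernel, and thereby ruling out the accidental cancellations available only to $C_3$ and $C_4$, is the principal obstacle; the hypothesis $n\ge 5$ is used exactly to defeat those coincidences.
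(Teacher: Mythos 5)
Your proposal is incomplete at exactly the point where the theorem's real content lies. The parts you do carry out are sound: the $q\ge 4$ case via the Combinatorial Nullstellensatz applied to the top-degree part $\prod_i(x_i-a_ix_{i-1}-b_ix_{i+1})$ (each exponent at most $3\le q-1$) is correct and is in fact a pleasant, self-contained alternative to the paper's route, which instead invokes Szczechla's theorem that no cycle is $4$-solvable; the cut-bond decomposition into trees and the nonsingular and corank-$1$ subcases over $\mathbb{F}_3$ are also fine. But the subcase $\dim\ker M^{\mathsf T}\ge 2$ is left as ``I expect to show,'' and this is not a technicality one can wave at: for a codimension-$2$ subspace of $\mathbb{F}_3^n$ there is no general reason it should meet the box $\prod_i(\mathbb{F}_3\setminus\{c_i\})$, and whether it does is precisely the question the theorem is asking. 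Moreover, your reduction is too weak to set this case up: requiring only that no bond be cut (at least one of $b_i,a_{i+1}$ nonzero) does not force every $n-2$ rows of $M$ to be independent, so you have not even established that the corank is at most $2$, let alone the structure of the dependency vectors that a contradiction must exploit.

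The paper closes this case with two ingredients you are missing. First, a stronger reduction (Lemma \ref{cyclesminusanedge}): deleting even a \emph{single directed} edge of $C_n$ already destroys $3$-solvability, proved by a greedy sequential colouring; hence \emph{all} coefficients $a_i,b_i$ are nonzero. This makes every $(n-2)\times(n-2)$ submatrix on $n-2$ consecutive rows lower triangular with nonzero diagonal, so every $n-2$ rows are independent, the corank is exactly $2$ (rank $n$ and $n-1$ being excluded by direct arguments), and the two dependent rows $A^{n-1}=\sum_{i\le n-2}\lambda_iA^i$, $A^n=\sum_{i\le n-2}\mu_iA^i$ have \emph{all} coefficients nonzero. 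Second, substituting $z_i=\langle A^i,x\rangle$ and using surjectivity of $x\mapsto A'x$, the quadratic $h(z)=(\sum\lambda_iz_i-b_{n-1})(\sum\mu_iz_i-b_n)$ must vanish on $S_1\times\cdots\times S_{n-2}$ with $|S_i|=2$; the Combinatorial Nullstellensatz then forces every cross-term coefficient $\lambda_i\mu_j+\lambda_j\mu_i$ to vanish, and multiplying three of the resulting ratio identities (this is where $n-2\ge 3$, i.e.\ $n\ge 5$, enters) yields $1=-1$ in $\mathbb{F}_3$. Your ``one-coordinate variation'' idea has no analogue here because the obstruction is not a counting one; it is an algebraic identity among the $\lambda_i,\mu_i$, and without their full support you cannot derive the contradiction. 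To complete your proof you would need both the directed-edge lemma and the cross-term computation, at which point you have essentially reconstructed the paper's argument.
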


Combined with the result of Szczechla \cite{Szczechla17}, cycles are the first known examples of graphs for which non-linear guessing strategies outperform linear ones.
More precisely, $C_n$ is $3$-solvable if and only if $n=4$ or is a multiple of $3$, while for $n>4$ it is only linearly $2$-solvable.

The cycles provide a moderate separation between linear and non-linear guessing strategies, however a much more significant separation can be shown in the case of complete bipartite graphs.
Theorem \ref{polynomialbipartite} shows that for sufficiently large $n$, the hat guessing number of $K_{n,n}$ is at least $n^{\frac{1}{2}-o(1)}$. On the other hand, linear guessing strategies are significantly less powerful here, as described in the next theorem.

\begin{theorem}\label{linearK_n,n}
	$K_{n,n}$ is not linearly $q$-solvable for any proper prime power $q$.
\end{theorem}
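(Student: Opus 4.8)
The plan is to recast linear solvability as a covering problem and attack it with the Combinatorial Nullstellensatz. Label the two sides $A=\{a_1,\dots,a_n\}$ and $B=\{b_1,\dots,b_n\}$, identify $Q$ with $\mathbb{F}_q$, and encode a linear strategy by matrices $A=(\alpha_{ij}),B=(\beta_{ji})\in\mathbb{F}_q^{n\times n}$ together with shift vectors $\gamma,\delta\in\mathbb{F}_q^n$, so that player $a_i$ guesses $\sum_j\alpha_{ij}y_j+\gamma_i$ and player $b_j$ guesses $\sum_i\beta_{ji}x_i+\delta_j$, where $x,y\in\mathbb{F}_q^n$ are the color vectors of the two sides. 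A coloring is losing for the players (a ``bad coloring'') exactly when all the affine forms $L_i:=x_i-\sum_j\alpha_{ij}y_j-\gamma_i$ and $M_j:=y_j-\sum_i\beta_{ji}x_i-\delta_j$ are nonzero; thus the strategy wins if and only if the $2n$ affine hyperplanes $\{L_i=0\}$ and $\{M_j=0\}$ cover $\mathbb{F}_q^{2n}$. To prove the theorem I must show that, when $q$ is a proper prime power, this covering fails for \emph{every} strategy.

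Consider $P=\prod_i L_i\prod_j M_j$, a polynomial of degree $2n$ in the $2n$ variables $x_1,\dots,x_n,y_1,\dots,y_n$; a bad coloring is precisely a point of $\mathbb{F}_q^{2n}$ where $P\ne 0$. By the Combinatorial Nullstellensatz, such a point exists as soon as $P$ has a top-degree monomial (degree $2n$) in which every exponent is at most $q-1$ and whose coefficient is nonzero. The top-degree part of $P$ is the homogeneous polynomial $P_0=\prod_i\bigl(x_i-\sum_j\alpha_{ij}y_j\bigr)\prod_j\bigl(y_j-\sum_i\beta_{ji}x_i\bigr)$, so the shifts $\gamma,\delta$ are irrelevant and the problem reduces to a purely algebraic statement about $A,B$: for all $A,B$ over a proper prime power $\mathbb{F}_q$, the form $P_0$ has a monomial with all exponents $\le q-1$ and nonzero coefficient. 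As a first probe, the coefficient of the balanced monomial $\prod_i x_i\prod_j y_j$ equals the permanent of $\bigl(\begin{smallmatrix} I & -A\\ -B & I\end{smallmatrix}\bigr)$, so whenever this permanent is nonzero I am already done.

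For the nondegenerate case I would argue directly. Substituting $u=x-Ay-\gamma$ turns $M_j$ into the $j$-th coordinate of $v=(I-BA)y-Bu-c$ with $c=B\gamma+\delta$, and a bad coloring becomes a pair $(u,y)$ with $u$ and $v$ both entrywise nonzero. If $I-BA$ is invertible, then for $u=(1,\dots,1)$ the map $y\mapsto v$ is a bijection of $\mathbb{F}_q^n$, so I can force $v$ to be any entrywise-nonzero vector, and a bad coloring exists. The same conclusion is visible through inclusion--exclusion: if every subsystem obtained by intersecting a set of these hyperplanes has full rank, then the number of bad colorings equals exactly $\sum_{I,J\subseteq\{1,\dots,n\}}(-1)^{|I|+|J|}q^{2n-|I|-|J|}=(q-1)^{2n}>0$. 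Consequently a winning strategy must at least force $\det(I-BA)=0$, i.e.\ a rank deficiency among these subsystems; the regime where such deficiencies occur is exactly what remains.

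The crux---and the step where being a proper prime power is essential---is this degenerate case, in which $I-BA$ is singular and the balanced monomial of $P_0$ no longer survives. Here the guessing hyperplanes are $\mathbb{F}_q$-linear, and my plan is to exploit a proper subfield $\mathbb{F}_s\subsetneq\mathbb{F}_q$, which exists precisely because $q$ is a proper prime power. The key point is that $\mathbb{F}_q$-linearity is far more rigid than mere $\mathbb{F}_s$-linearity: multiplication by a scalar of $\mathbb{F}_q$ acts on $\mathbb{F}_q$, viewed as an $\mathbb{F}_s$-vector space, through its regular representation, and subfields are closed under the field operations in a way that generic subsets of a prime field are not. I expect to show that the rank deficiencies forced above can never eliminate all bad colorings once the ambient field carries such subfield structure---equivalently, that some monomial of $P_0$ with all exponents below $q$ must persist. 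Contrasting this with $C_4=K_{2,2}$, which is linearly $3$-solvable because over the prime field $\mathbb{F}_3$ the analogous cancellations \emph{are} available, confirms that the subfield hypothesis is genuinely necessary rather than a convenience. I anticipate this degenerate analysis to be the main technical hurdle, since it must convert the combinatorial rigidity of subfields into the nonvanishing of a specific coefficient of $P_0$, uniformly in $n$.
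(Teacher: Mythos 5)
Your setup is the same as the paper's (encode the strategy by a block matrix, reduce to finding a point where all $2n$ affine forms are nonzero), and your nondegenerate case is fine: if $I-BA$ is invertible, fixing $u=(1,\dots,1)$ and solving for $y$ does produce a bad coloring. But the entire content of the theorem lives in the case you defer. Any competent strategy designer will make $I-BA$ singular (indeed your own inclusion--exclusion count shows a winning strategy \emph{must} create rank deficiencies), so the ``degenerate case'' is not an edge case but the whole problem, and for it you offer only a plan: you say you ``expect to show'' that subfield structure forces a surviving monomial of $P_0$, without exhibiting that monomial or proving its coefficient is nonzero. As it stands this is not a proof. Note also that your ``first probe'' does not buy anything on its own: the coefficient of the balanced monomial is indeed $\mathrm{per}\bigl(\begin{smallmatrix} I & -A\\ -B & I\end{smallmatrix}\bigr)$, but this permanent can vanish over $\mathbb{F}_q$, and a nonzero homogeneous $P_0$ of degree $2n$ need not have any monomial with all exponents at most $q-1$, so the Nullstellensatz cannot be invoked without further work.

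The paper closes exactly this gap by importing a nontrivial external result (Alon--Tarsi, ``A nowhere-zero point in linear mappings''): for $q$ a proper prime power of characteristic $p$ and any full-row-rank matrix $A'$ over $\mathbb{F}_q$, one can choose each coordinate from a prescribed $p$-element subset of $\mathbb{F}_q$ so that $A'z$ differs from any given target vector in every coordinate. Concretely, the paper takes a maximal linearly independent subset $z_1,\dots,z_{n+k}$ of the $2n$ forms, writes the remaining $n-k$ forms as $A'(z_1,\dots,z_{n+k})^t$, and applies the lemma with each $S_j$ a $p$-subset of $\mathbb{F}_q$ avoiding the corresponding shift $b_j$ or $c_{j-n}$ --- which is possible precisely because $q>p$. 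Your intuition that the proper-subfield hypothesis is the engine, and your observation that $K_{2,2}$ over the prime field $\mathbb{F}_3$ shows the hypothesis cannot be dropped, are both correct and consistent with the paper; but the step that converts that intuition into a nonvanishing statement is exactly the Alon--Tarsi lemma (or something of equivalent strength), and your writeup does not supply it.
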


The proof of Theorem \ref{linearK_n,n} relies on a result of Alon and Tarsi \cite{Alon89} (see Lemma \ref{alon89} below), which is only known to hold for finite fields of a \emph{proper} prime power order, but is conjectured in \cite{Alon89} to hold for any prime power $q\geq 4$.
If indeed the conjecture holds, then it would imply that Theorem \ref{linearK_n,n} holds for any prime power $q\geq 4$.
Note that this cannot be further improved, since $C_4 = K_{2,2}$ is linearly $3$-solvable.
For more details, see Subsection \ref{subsec:linear_bipartite}.

The next two results relate linear solvability to other graph parameters, namely, degeneracy and minimum rank.
In Question \ref{degree} we ask whether the hat guessing number is bounded from above by the degeneracy of the graph.
Here we resolve this question for the linear solvability.
Notice that the bound below is tight for $K_q$ since it is $(q-1)$-degenerate and ${\rm{HG_{lin}}}(K_q)=q$.

\begin{theorem}\label{degeneratenonlinear}
	For any $d$-degenerate graph $G$, ${\rm{HG_{lin}}}(G)\le d+1$.
\end{theorem}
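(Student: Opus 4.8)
The plan is to prove the contrapositive: for every prime power $q \ge d+2$ and every linear guessing strategy on $G$, I will exhibit a colouring under which all players guess incorrectly. First I would translate a linear strategy into a system of affine forms. If player $v$ guesses $g_v(x) = \sum_{u \sim v} a_{v,u} x_u + b_v$, set $L_v(x) = x_v - g_v(x)$; then player $v$ errs precisely when $L_v(x) \neq 0$, and the strategy loses if and only if there is a point $x \in \mathbb{F}_q^V$ avoiding all of the affine hyperplanes $H_v = \{L_v = 0\}$. Since the coefficient of $x_v$ in $L_v$ equals $1$, each $L_v$ depends nontrivially on $x_v$. The goal thus becomes a hyperplane-avoidance statement, which I would establish greedily using a degeneracy ordering.

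Next I would fix an ordering $v_1, \dots, v_n$ witnessing $d$-degeneracy, so that each $v_i$ has at most $d$ neighbours among $\{v_1, \dots, v_{i-1}\}$ (its back-neighbours). For each $j$ let $m(j)$ be the largest index $k$ for which $x_{v_k}$ occurs with nonzero coefficient in $L_{v_j}$; because the coefficient of $x_{v_j}$ is $1$, we have $m(j) \ge j$. I would then assign the coordinates $x_{v_1}, x_{v_2}, \dots, x_{v_n}$ one at a time in increasing order, and at step $i$ I would charge to $x_{v_i}$ exactly those constraints $L_{v_j}$ with $m(j) = i$. For such a constraint every nonzero-coefficient variable other than $x_{v_i}$ has index smaller than $i$ and is therefore already fixed, so $L_{v_j}$ collapses to a nonconstant affine function of $x_{v_i}$ alone; requiring $L_{v_j} \neq 0$ forbids exactly one value of $x_{v_i}$.

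The crux is the counting argument that bounds, for each $i$, the number of constraints charged to step $i$, and I expect this to be the main point to get right. The constraint $L_{v_i}$ itself may be charged to step $i$, contributing at most one forbidden value. Any other $j$ with $m(j) = i$ satisfies $i > j$ and $v_i \in N(v_j)$, which forces $v_j$ to be a back-neighbour of $v_i$; distinct such $j$ yield distinct back-neighbours, so there are at most $d$ of them. Hence at most $d+1$ values of $x_{v_i}$ are forbidden at step $i$, and since $q \ge d+2$ a legal value always remains. Running the greedy assignment through all $n$ steps produces a point avoiding every $H_v$: as each constraint $L_{v_j}$ is resolved exactly once, at step $m(j)$, the resulting point satisfies all of them, i.e.\ it is a colouring on which the strategy fails. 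This gives ${\rm{HG_{lin}}}(G) \le d+1$. (An alternative route is to apply the Combinatorial Nullstellensatz to $\prod_v L_v$, but there the difficulty merely migrates to producing a top-degree monomial with small individual exponents and nonzero coefficient, since the natural candidate $\prod_v x_v$ has as coefficient a graph permanent that can vanish; the greedy argument sidesteps this entirely.)
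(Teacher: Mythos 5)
Your proof is correct. The combinatorial core is identical to the paper's: you charge each constraint $L_{v_j}$ to the largest-indexed variable appearing in it with nonzero coefficient, and use the degeneracy ordering to bound each charge class by $d+1$ (the constraint of $v_i$ itself plus at most $d$ back-neighbours). This is exactly the paper's partition $P_n,\ldots,P_1$ of the factors $p_j = x_j - f_j$, with your $m(j)$ playing the role of the index $i$ for which $p_j \in P_i$. Where you diverge is the finish: the paper feeds this partition into the Combinatorial Nullstellensatz, showing that the monomial $\prod_i x_i^{|P_i|}$ has total degree $n$ and nonzero coefficient (the key point being that $x_i$ appears only in the factors belonging to $P_i \cup P_{i+1} \cup \cdots \cup P_n$, so the monomial can be assembled in exactly one way), and then concludes nonvanishing from $|P_i| \le d+1 < q$. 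You instead run a direct greedy assignment of $x_{v_1},\ldots,x_{v_n}$, resolving at step $i$ precisely the constraints charged there, each of which forbids one value. The two finishes are morally the same — the uniqueness of the monomial expansion in the paper is an algebraic shadow of the triangular structure your greedy exploits — but yours is more elementary, avoiding the Nullstellensatz entirely. Your closing remark is also apt: the paper does \emph{not} use the naive monomial $\prod_v x_v$ (whose coefficient is a permanent-type sum that can vanish) but rather the tailored monomial $\prod_i x_i^{|P_i|}$, which is exactly how it evades the obstruction you identify.
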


An $n\times n$ matrix $M$ over $\mathbb{F}_q$ {\it fits} a graph $G$ with $n$ vertices if $M_{i,i}\neq 0$ for $1\le i\le n$ and $M_{i,j}=M_{j,i}=0$ if $v_i$ and $v_j$ are not connected in $G$, i.e., $\{v_i,v_j\}\not\in E(G)$.
The {\it minimum rank} of a graph $G$ denoted by ${\rm{mr}}(G)$, is defined to be the minimum integer $r$ for which there exists a matrix $M$ over some finite field which fits $G$ and $\rank(M)=r$.
This parameter of a graph was initially introduced by Haemers \cite{Haemers79,Haemers78} as an upper bound for the Shannon capacity of a graph \cite{Shannon}.
Notice that the minimum rank was originally defined over any field, not necessarily finite, however we use the above definition. The last result connects linear solvability and minimum rank, where loosely speaking, it claims that large minimum rank implies small linear solvability.

\begin{theorem}\label{minrk1}
	Let $G$ be a graph with $n$ vertices, then ${\rm{HG_{lin}}}(G)\le n-{\rm{mr}}(G)+1$.
\end{theorem}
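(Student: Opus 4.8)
The plan is to translate a linear winning strategy into a matrix that fits $G$ and has small rank. Suppose $G$ is linearly $q$-solvable over $\mathbb{F}_q$. In a linear strategy player $i$ guesses $\sum_{j\sim i}a_{ij}x_j+b_i$, so if we set $A_{ii}=1$, $A_{ij}=-a_{ij}$ for $j\sim i$, and $A_{ij}=0$ otherwise, then player $i$ is correct exactly when $(Ax)_i=b_i$. By construction $A$ fits $G$ (its diagonal is nonzero and it vanishes off the edges of $G$), so ${\rm{mr}}(G)\le \rank(A)$; hence it suffices to prove $\rank(A)\le n-q+1$. The winning condition ``for every coloring $x$ some player is correct'' says precisely that the subspace $U:=\mathrm{Im}(A)$ avoids the box $B:=\prod_{i=1}^n(\mathbb{F}_q\setminus\{b_i\})$, i.e. every $y\in U$ satisfies $y_i=b_i$ for some $i$.

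Next I would coordinatize $U$. Let $r=\rank(A)=\dim U$. The $n$ coordinate functionals $y\mapsto y_i$ restrict to functionals $\lambda_i$ on $U$, and since $A_{ii}\ne 0$ forces every row of $A$ to be nonzero, each $\lambda_i$ is a \emph{nonzero} functional on $U$; this is where the fitting hypothesis is essential (without it $U$ could lie in a coordinate hyperplane and the conclusion would fail). The restrictions of the coordinate functionals span $U^*$, so after reindexing we may assume $\lambda_1,\dots,\lambda_r$ form a basis of $U^*$. Using these as coordinates identifies $U$ with $\mathbb{F}_q^r$ via $y\mapsto s=(y_1,\dots,y_r)$, and for each $i>r$ we get $y_i=L_i(s)$ for a nonzero linear form $L_i$. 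Avoidance of $B$ now reads: for every $s\in\mathbb{F}_q^r$, either $s_j=b_j$ for some $j\le r$, or $L_i(s)=b_i$ for some $i>r$. Restricting to the grid $T:=\prod_{j=1}^r(\mathbb{F}_q\setminus\{b_j\})$, of side $q-1$, we conclude that the $n-r$ affine hyperplanes $\{L_i(s)=b_i\}_{i>r}$ cover all of $T$.

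Finally I would extract the degree bound via the Combinatorial Nullstellensatz. The polynomial $P(s)=\prod_{i>r}(L_i(s)-b_i)$ is nonzero (a product of nonconstant affine factors over a field) of total degree $n-r$, and it vanishes on all of $T=\prod_j(\mathbb{F}_q\setminus\{b_j\})$. Since $\deg P=n-r$, if $n-r<q-1$ then every exponent in every monomial of $P$ is at most $n-r<q-1=|\mathbb{F}_q\setminus\{b_j\}|$, so applying the Combinatorial Nullstellensatz to a top-degree monomial of $P$ (which has nonzero coefficient as $P\ne 0$) produces a point of $T$ where $P\ne 0$, a contradiction. Therefore $n-r\ge q-1$, i.e. $\rank(A)=r\le n-q+1$, and combined with ${\rm{mr}}(G)\le\rank(A)$ this yields ${\rm{HG_{lin}}}(G)=q\le n-{\rm{mr}}(G)+1$.

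The step I expect to be delicate is making the rank $r$ actually appear in the bound rather than the much weaker estimate $n\ge q$ (which is all one gets from the fact that coordinate hyperplanes cover $U$). This is exactly what the passage from $U$ to the punctured grid $T$ of side $q-1$ in $r$ variables accomplishes, and it hinges on all coordinate functionals $\lambda_i$ being nonzero on $U$, that is, on $A$ fitting $G$; a degenerate coordinate (a zero row) would let $U$ sit inside a coordinate hyperplane and break the argument, so the $A_{ii}\ne 0$ condition is used in an essential way.
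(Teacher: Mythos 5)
Your proposal is correct and follows essentially the same route as the paper: pass to the matrix representation $(A,b)$ fitting $G$, use an independent set of $r=\rank(A)$ of the forms $\langle A^i,x\rangle$ as free coordinates ranging over the full grid $\prod(\mathbb{F}_q\setminus\{b_i\})$, and apply the Combinatorial Nullstellensatz to the degree-$(n-r)$ product of the remaining affine forms to force $n-r\ge q-1$. Your phrasing via $\mathrm{Im}(A)$ and coordinate functionals is just the dual description of the paper's surjectivity of $A'x$, and your observation that the leading homogeneous part $\prod_{i>r}L_i$ is nonzero replaces the paper's explicit choice of monomial; both are fine.
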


Theorem \ref{minrk1} is tight for any prime power $q$, as ${\rm{mr}}(K_q)=1$ and ${\rm{HG_{lin}}}(K_q)=q$.  Notice that in general it is not
practical to apply the above bound since there is no known
efficient algorithm which computes the minimum rank of a graph over
a given field and
in fact this problem is known to be NP-hard \cite{Peeters}.

\subsection{Organization}

\noindent The rest of the paper is organized as follows. In Section \ref{sec:related} we briefly review other versions of hat guessing games.
Necessary definitions and notations are given in Section \ref{sec:prelims}.
Theorems \ref{polynomialbipartite} and \ref{polynomialdirected} are proved in Section \ref{sec:multipartite}.
In Section \ref{sec:bounded_deg} we present the proofs of Theorems \ref{maxdeg} and \ref{onemoreobservation}.
In Section \ref{sec:linear} we consider linear hat guessing numbers and prove  Theorems
\ref{Linearforcycles}, \ref{linearK_n,n}, \ref{degeneratenonlinear} and \ref{minrk1}.

\section{Related work}
\label{sec:related}
\noindent In the literature, there are several versions of hat guessing games.
The most famous (maybe also the earliest) version was introduced by Ebert \cite{ebert1998applications} and advertised by Robinson \cite{newyorktimes} in New York Times as a recreational mathematical game.
In that version the graph $G$ is a clique, each player gets either a red or a blue hat with equal probability, and is asked to guess or just pass.
The players win if at least one player guesses correctly and no one guesses wrong, otherwise they lose.
The goal is to design a guessing strategy maximizing the probability of winning.
Krzywkowski \cite{krzywkowski2010modified} considered a variation of the above game, where the players are allowed to guess sequentially.
Winkler \cite{winkler2002games} investigated another modification in which the players cannot pass and the objective is to guarantee as many correct guesses as possible, assuming the worst-case hat coloring.
Feige \cite{feige2004you} and Aggarwal et al. \cite{aggarwal2005derandomization} generalized Winkler's problem to $q$ colors.
The version considered in this work was introduced by Butler et al. \cite{Butler08}, where instead of being a clique, the sight graph can be an arbitrary (directed) graph.
This version has been investigated further by Gadouleau and Georgiou \cite{Gadouleau09}, Szczechla \cite{Szczechla17} and Gadouleau \cite{gadouleau2018finite}.
The reader is referred to the theses of Farnik \cite{Farnik2015} and Krzywkowski \cite{krzywkowski2012hat} for extensive reviews on different hat guessing games.

Hat guessing problems have attracted increasing attention and found many applications and connections to several seemingly unrelated research areas, such as coding theory \cite{Ebertcodingtheory}, auctions \cite{aggarwal2005derandomization}, network coding \cite{riis2007graph,gadgraph} and finite dynamical systems \cite{gadouleau2018finite}.

The phenomena, exhibited in Theorem \ref{Linearforcycles} and Theorem \ref{linearK_n,n}, that nonlinear encoding functions (operations) can outperform linear ones is known to exist in information theory and communication complexity, however explicit examples for it are rather sporadic.
We list two of them.
In the context of index coding \cite{birk}, Lubetzky and Stav \cite{stav} showed that nonlinear encoding functions can significantly outperform the optimal linear encoding function.
Recently, Alon, Efremenko and Sudakov \cite{klim} showed that testing equality among bit strings over communication graphs can be done much more efficiently using nonlinear functions than by restricting to linear ones.

\section{Preliminaries}
\label{sec:prelims}

\noindent Throughout the paper $G$ is a graph with the vertex set $V(G)=\{v_1,\ldots,v_n\}$ and the edge set $E(G)$.
For a set $S\s V(G)$, the {\it induced subgraph} of $G$ on $S$ is
the graph with vertex set $S$ and all edges of $E(G)$ with
both endpoints in $S$.
$G$ is said to be {\it $d$-degenerate} if there exists an ordering $v_{i_1},\ldots,v_{i_n}$ of $V(G)$, such that for $2\le j\le n$, $v_{i_j}$ is connected to at most $d$ vertices among $v_{i_1},\ldots,v_{i_{j-1}}$.
For two distinct vertices $u,v\in V(G)$, the {\it distance} between $u,v$ is the length of a shortest path connecting them and $\infty$ if there does not exist such a path.
The {\it girth} of a directed graph is the length of its the shortest directed cycle and $\infty$ if it does not contain any directed cycle.
A directed graph is called \emph{oriented} if its girth is at least 3.

For a positive integer $q$ let $[q]=\{1,\ldots,q\}$, and we view the set of possible colors as $Q=[q]$.
When we consider linear guessing strategies, we assume that $q$ is a prime power and $Q$ is the finite field $\mathbb{F}_q$.

The set of colors assigned to the vertices of $G$ is represented by a vector $x$ of length $n$, $x=(x_1,\ldots,x_n)\in[q]^n$, where $x_i$ is the  color assigned to $v_i$.
For a subset $S\s V(G)$ of vertices, $\chi(S)$  denotes the colors assigned to the vertices in $S$; that is,
$$\chi(S)=x|_S=(x_i:v_i\in S)\in[q]^{|S|},$$ is the restriction of $x$ to coordinates $i$ with $v_i\in S$.
Moreover, if the coloring $x\in [q]^n$ and the guessing strategies are understood from the context, we write $\phi(S)=1$ if at least one of the vertices in $S$ guesses correctly, otherwise we write $\phi(S)=0$.
The notation $\chi(\cdot)$ and $\phi(\cdot)$ are frequently used to simplify the proofs.

The guessing strategy of $v_i$ is a function $f_i$, whose variables are the colors assigned to the neighbors of $v_i$.
Consequently, we can write $f_i=f_i(x)=f_i(x_{i_1},\ldots,x_{i_{d_i}})$,
where $v_{i_1},\ldots,v_{i_{d_i}}$ are the neighbors of $v_i$.
One can easily see that a vertex $v_i$ guesses its color correctly if and only if $x_i-f_i=0$, and $f:=(f_1,\ldots,f_n)$ forms a proper guessing strategy for $G$ if and only if the function

\begin{equation}\label{guessingstrategy}
  F(x)=\prod_{i=1}^n (x_i-f_i)
\end{equation}

\noindent vanishes on $Q^n$, where for $Q=[q]$, $f_i$ is an $\mathbb{R}$-valued function with $d_i$ variables, and for a prime power $q$ and $Q=\mathbb{F}_q$, $f_i$ is an $\mathbb{F}_q$-valued function with $d_i$ variables.

\subsection{The Hamming ball condition for bipartite graphs}

\noindent For a positive integer $m$ and $x,y\in[q]^m$, the {\it Hamming distance} between $x,y$ is the number of coordinates in which they differ, i.e., $${\rm{d}}(x,y)=|\{i\in[m]:x_i\neq y_i\}|.$$
For a vector $a\in[q]^m$ and an integer $r$, the {\it Hamming ball} $B_r(a)$ of radius $r$ and center $a$ is the set of vectors of $[q]^m$ that are of  Hamming distance at most $r$ from $a$, i.e., $$B_r(a)=\{x\in[q]^m:{\rm{d}}(x,a)\le r\}.$$
In the proofs we will make use of the  following simple but useful observation.
For any $a\in[q]^m$ and  $x\in B_{m-1}(a)$, $x$ and $a$ agree on at least one coordinate, i.e., there exists an $i$ such that $a_i=x_i$.

Next we design a guessing strategy for complete bipartite graphs based on this observation.
Consider the complete bipartite graph $K_{m,n}$ with left part $V_L=\{u_1,\ldots,u_m\}$ and right part $V_R=\{v_1,\ldots,v_n\}$.
The following lemma, which is a special case of a more general result introduced in \cite{Gadouleau09}, provides a sufficient condition for the solvability of complete bipartite graphs.

\begin{lemma}[The Hamming ball condition, see Section 2, \cite{Gadouleau09}]\label{characterization}
$K_{m,n}$ is $q$-solvable if there is a guessing strategy for the vertices of $V_R$ which satisfies the following property.
For any coloring $\chi(V_R)=y\in[q]^n$ of the right part, the set

$$\ma{C}_y:=\{x\in[q]^m:\text{ given}~\chi(V_R)=y,~\text{if}~\chi(V_L)=x~\text{then}~\phi(V_R)=0\},$$

\noindent is contained in a Hamming ball $B_{m-1}(a^y)$ for some $a^y\in[q]^m$.
\end{lemma}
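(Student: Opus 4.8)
The plan is to prove the Hamming ball condition (Lemma \ref{characterization}) by exhibiting an explicit guessing strategy for the left part $V_L$, given the assumed strategy for $V_R$, and then verifying that together they guarantee at least one correct guess for every coloring of $K_{m,n}$. The setup is favorable: each vertex in $V_L$ sees the entire right part (since the graph is complete bipartite), so every left vertex knows $\chi(V_R)=y$ completely. Dually, each vertex in $V_R$ sees all of $V_L$, so the right strategy is a genuine function of $\chi(V_L)=x$. The crux of the argument is to use the freedom in the left vertices' strategies, together with the containment hypothesis $\ma{C}_y\subseteq B_{m-1}(a^y)$, to cover exactly the ``bad'' colorings on which the right part fails.

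First I would fix an arbitrary adversarial coloring $x\in[q]^m$ on $V_L$ and $y\in[q]^n$ on $V_R$, and split into two cases according to whether the right part already succeeds. If $x\notin\ma{C}_y$, then by the definition of $\ma{C}_y$ we have $\phi(V_R)=1$, i.e.\ some vertex in $V_R$ guesses correctly, and we are done regardless of what $V_L$ does. The substantive case is $x\in\ma{C}_y$, where the right part fails and the burden falls entirely on $V_L$; here I would use the hypothesis $x\in\ma{C}_y\subseteq B_{m-1}(a^y)$.

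The key step is the strategy design for $V_L$, which exploits the simple observation highlighted just before the lemma: for any $a\in[q]^m$ and any $x\in B_{m-1}(a)$, the vectors $x$ and $a$ agree in at least one coordinate. I would define the guessing function of the $i$-th left vertex $u_i$ to be $f_{u_i}(y) = a^y_i$; this is legitimate because $u_i$ sees all of $V_R$ and hence knows $y$, and because the centers $a^y$ are fixed in advance as part of the (agreed-upon) strategy. Now suppose $x\in\ma{C}_y$. Then $x\in B_{m-1}(a^y)$, so there exists a coordinate $i$ with $x_i = a^y_i = f_{u_i}(y)$, meaning $u_i$ guesses its own color correctly and $\phi(V_L)=1$. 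Combining the two cases, for every coloring at least one vertex (in $V_R$ when $x\notin\ma{C}_y$, in $V_L$ when $x\in\ma{C}_y$) guesses correctly, which is exactly the statement that $K_{m,n}$ is $q$-solvable.

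I do not anticipate a serious obstacle here; the argument is essentially a direct unpacking of the definitions once the left strategy $f_{u_i}(y)=a^y_i$ is written down. The only point requiring a little care is the logical interface with the right part's strategy: the definition of $\ma{C}_y$ must be read correctly as the set of left-colorings that force the right part to fail, so that its complement automatically yields a correct right guess, and one must confirm that the left vertices genuinely have access to $y$ (which they do, by completeness of the bipartite graph) so that selecting the center $a^y$ is a valid strategy. The mild subtlety worth flagging is that $a^y$ need only be \emph{some} valid center of a ball containing $\ma{C}_y$, and the players commit to a choice of $a^y$ for each $y$ in advance; no uniqueness or minimality of the ball is needed, so the hypothesis is used in its weakest form.
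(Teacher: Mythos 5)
Your proposal is correct and follows exactly the paper's own argument: each left vertex $u_i$ guesses $a^y_i$ upon seeing $\chi(V_R)=y$, and the case split on $\phi(V_R)$ together with the radius-$(m-1)$ containment finishes the proof. No differences worth noting.
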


\begin{proof}
    Assume that there exists a guessing strategy for $V_R$ which satisfies the above property.
    Next, we design a guessing strategy for $V_L$.
    If $\chi(V_R)=y$, then $u_i$ (which is the $i$th vertex of $V_L$) guesses its color to be $a^y_i$.
    If $\phi(V_R)=1$ we are done, otherwise $\phi(V_R)=0$ and  $\chi(V_L)\in\ma{C}_y\s B_{m-1}(a^y)$.
    Therefore at least one vertex in $V_L$ must guess correctly as ${\rm{d}}(\chi(V_L),a^y)\le m-1$.
\end{proof}

In order to apply Lemma \ref{characterization} we should identify sufficient conditions for which a set of vectors of $[q]^m$ is contained in a Hamming ball of radius $m-1$.
Intuitively, if the set is small enough then it is clear that it should be contained in a Hamming ball of small radius.
For example, any set $x^1,\ldots,x^m$ of $m$ vectors in $[q]^m$ is contained in a Hamming ball of radius $m-1$, since $x^i\in B_{m-1}\big((x^1_1,\ldots,x^m_m)\big)$ for every $1\le i\le m$, where $x^i_i$ is the $i$th coordinate of $x^i$.
This useful observation is stated as the following lemma.

\begin{lemma}\label{hammingballtrivial}
Any set of at most $m$ vectors in $[q]^m$ is contained in a Hamming ball of radius $m-1$.
\end{lemma}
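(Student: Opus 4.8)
The plan is to prove Lemma~\ref{hammingballtrivial} by exhibiting an explicit center for the Hamming ball, generalizing exactly the construction sketched in the paragraph immediately preceding the statement. Let the given set consist of vectors $x^1,\ldots,x^t\in[q]^m$ with $t\le m$. First I would pad the list if necessary: if $t<m$, simply repeat one of the vectors so that we have exactly $m$ vectors $x^1,\ldots,x^m$ (a ball containing the padded list certainly contains the original list). Now I would define the candidate center $a\in[q]^m$ by taking its $i$th coordinate from the $i$th vector, namely $a_i=x^i_i$ for $1\le i\le m$, i.e. $a=(x^1_1,x^2_2,\ldots,x^m_m)$ is the ``diagonal'' vector.

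The key step is then to verify that every $x^i$ lies in $B_{m-1}(a)$. This is immediate from the construction: by definition $a_i=x^i_i$, so $x^i$ and $a$ agree in coordinate $i$, whence ${\rm{d}}(x^i,a)\le m-1$ because they can differ in at most the remaining $m-1$ coordinates. Thus $x^i\in B_{m-1}(a)$ for every $i$, and since all the original vectors appear among the $x^i$, the whole set is contained in $B_{m-1}(a)$.

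There is essentially no obstacle here; the statement is a clean pigeonhole-flavored observation and the only thing to be careful about is the bookkeeping when $t<m$, which the padding step handles cleanly. I would present the proof in two sentences: define the diagonal center and observe each vector agrees with it in at least one coordinate.
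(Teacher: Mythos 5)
Your proof is correct and is essentially identical to the paper's argument, which also takes the ``diagonal'' vector $(x^1_1,\ldots,x^m_m)$ as the center and observes that each $x^i$ agrees with it in coordinate $i$. The padding step for $t<m$ is a harmless bookkeeping addition.
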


One can apply a probabilistic argument to improve the result of Lemma \ref{hammingballtrivial} for $m\gg q$.

\begin{lemma}\label{hammingball}
  Any set $\ma{C}\s[q]^m$ of at most $e^{m/q}$ vectors is contained in a Hamming ball of radius $m-1$.
\end{lemma}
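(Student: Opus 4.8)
The plan is to use the probabilistic method with a single union bound. By the observation recorded just before Lemma \ref{hammingballtrivial}, a vector $x\in[q]^m$ lies in $B_{m-1}(a)$ exactly when $x$ and $a$ agree on at least one coordinate; conversely, agreement on even one coordinate forces ${\rm{d}}(x,a)\le m-1$, i.e.\ $x\in B_{m-1}(a)$. Hence it suffices to produce a single center $a\in[q]^m$ that agrees with every $x\in\ma{C}$ in at least one position, equivalently, such that no $x\in\ma{C}$ differs from $a$ in all $m$ coordinates.

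First I would choose $a\in[q]^m$ uniformly at random, picking its $m$ coordinates independently and uniformly from $[q]$. For a fixed $x\in\ma{C}$, call $x$ \emph{bad} (with respect to this $a$) if $a_i\neq x_i$ for every $i\in[m]$. Since the coordinates of $a$ are independent and each disagrees with $x_i$ with probability $(q-1)/q$, the probability that $x$ is bad is exactly $(1-1/q)^m$.

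Next I would apply the union bound over all $x\in\ma{C}$: the probability that some $x\in\ma{C}$ is bad is at most $|\ma{C}|\,(1-1/q)^m$. Using the hypothesis $|\ma{C}|\le e^{m/q}$ together with the elementary strict inequality $1-1/q<e^{-1/q}$ (valid for every $q\ge 1$), one gets $(1-1/q)^m<e^{-m/q}$, and therefore $|\ma{C}|\,(1-1/q)^m< e^{m/q}\cdot e^{-m/q}=1$. Consequently, with positive probability no $x\in\ma{C}$ is bad, so there exists a fixed center $a\in[q]^m$ agreeing with every $x\in\ma{C}$ in at least one coordinate, which by the observation above yields $\ma{C}\s B_{m-1}(a)$.

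The argument carries no real obstacle; the only point deserving care is securing a \emph{strict} inequality in the union bound, because the assumed bound $|\ma{C}|\le e^{m/q}$ exactly cancels the crude estimate $e^{-m/q}$ for $(1-1/q)^m$. This is resolved by invoking the strict form $1-t<e^{-t}$ for $t>0$, which pushes the product strictly below $1$ and thereby guarantees that a good center exists.
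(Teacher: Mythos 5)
Your proof is correct and is essentially identical to the paper's own argument: a uniformly random center, the computation $\Pr[{\rm d}(a,x)=m]=(1-1/q)^m$, a union bound, and the strict inequality $(1-1/q)^m<e^{-m/q}$ to push the failure probability strictly below $1$. Your extra remark about needing strictness is exactly the point the paper's chain of inequalities relies on.
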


\begin{proof}
  Pick $a\in[q]^m$ to be the center of the Hamming ball uniformly at random.
  Since for any $x\in\ma{C}$, $\Pr[{\rm{d}}(a,x)=m]=(1-\fr{1}{q})^m$, we get by the union bound
  $$\Pr[\exists~x\in\ma{C},~s.t.~{\rm{d}}(a,x)=m]\le|\ma{C}|(1-\fr{1}{q})^m<|\ma{C}|e^{-\fr{m}{q}}\le 1,$$
  which implies that with positive probability there exists a vector $a\in[q]^m$ such that $\ma{C}\s B_{m-1}(a)$.
\end{proof}

The main idea in the proofs of Theorem \ref{polynomialbipartite} and Theorem \ref{polynomialdirected} can be explained briefly as follows.
Consider the bipartite graph $K_{m,n}$, where our goal is to show that ${\rm{HG}}(K_{m,n})\ge q$.
Let the vertices of $V_R$ pick their guessing strategies $f_{v_i}:[q]^n\rightarrow [q], 1\le i\le n$ independently and uniformly at random.
For a fixed coloring $\chi(V_R)=y\in[q]^n$ of the right part, the probability that no vertex of $V_R$ guesses correctly given the left part has coloring $\chi(V_L)=x$, satisfies

$$\Pr[\phi(V_R)=0]=\Pr[{\rm{d}}\Big(y,\big(f_{v_1}(x),\ldots,f_{v_n}(x)\big)\Big)=n]=(1-\fr{1}{q})^n.$$

\noindent Then by linearity of expectation

$${\rm{E}}[|\ma{C}_y|]=q^m(1-\fr{1}{q})^n<q^me^{-\fr{n}{q}}\le 1$$

\noindent for $n\ge mq\ln q$, where $\ma{C}_y$ is defined in Lemma \ref{characterization}.
Therefore, if $n$ is large enough then with high probability $|\ma{C}_y|$ is small enough and by Lemma \ref{hammingballtrivial} or Lemma \ref{hammingball} it is contained in a Hamming ball of radius $m-1$.
Finally, by Lemma \ref{characterization} the graph is $q$-solvable.

In order to make sure that $|\ma{C}_y|$ is small for any coloring $\chi(V_R)=y$, next we introduce the concept of saturated matrices.

\subsection{Matrices and guessing strategies}

\noindent Given a guessing strategy for the vertices of $V_R$, we represent it by a matrix as follows.
Let $A$ be an $n\times q^m$ $q$-ary matrix, whose rows and columns are indexed by the $n$ vertices of $V_R$ and the $q^m$ different possible colorings of $V_L$, respectively.
$A_{i,x}$, the entry of $A$ in row $v_i\in V_{R}$ and column $x\in[q]^m$, is the guess of $v_i$ given the coloring of the left part is $\chi(V_L)=x$ .

Next we introduce a matrix property which is sufficient for the condition of Lemma \ref{characterization} to hold.
An $n\times l$ $q$-ary matrix $M$ is called {\it $t$-saturated} if for any set $T\subseteq [l]$ of $t$ columns, there exists at least one row $r$ such that the restriction of $M$ to row $r$ and columns in $T$ is onto $[q]$, i.e.,
$$\{M_{r,i}:i\in T\}=[q].$$
Notice that $t$-saturated matrices exist only if $t\ge q$.
In the literature, the rows of a $t$-saturated matrix is known as a {\it $(l,q,t)$}-family \cite{Chakrabortylistdecoding} and in particular, for $t=q$ such a family is called a {\it $t$-perfect hash family} \cite{Fredman84}.
Since in this paper the guessing strategies are frequently represented in the matrix form, for convenience we simply view these families as matrices.
In information theory the columns of a $t$-saturated matrix were used to construct the {\it zero-error list-decoding code for the $q/(q-1)$ channel with list size $t-1$} \cite{Chakrabortylistdecoding,Elias88}.
The construction of saturated matrices and their applications were studied extensively (see e.g., \cite{bhandari2018bounds,Chakrabortylistdecoding,Fredman84,Korner86}).

The following lemma shows the existence of saturated matrices.
In the next section these matrices will be used to construct guessing strategies for complete bipartite graphs.

\begin{lemma}[\cite{Chakrabortylistdecoding,Fredman84}]\label{saturated}
Let $n,l,q$ be integers, then
  \begin{itemize}
    \item [$(i)$] an $n\times l$ $q$-ary $q$-saturated  matrix ($q$-perfect hash family) exists for $$l\le 2^\frac{-q}{q-1}(\fr{1}{1-\fr{q!}{q^q}})^{\fr{n}{q-1}}.$$
    \item [$(ii)$] for $t\ge q\ln q$, an $n\times l$ $q$-ary $t$-saturated matrix ($(l,q,t)$-family) exists for $$l\le 2^\frac{-t}{t-1}(\fr{1}{q}e^{\fr{t}{q}})^{\fr{n}{t-1}}.$$
  \end{itemize}
\end{lemma}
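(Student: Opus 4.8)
The plan is to prove both parts by the probabilistic method, using the deletion (alteration) technique in order to reach the stated bounds, whose exponent $n/(t-1)$ is slightly better than the $n/t$ that a bare union bound would give. Fix the number of rows $n$ and a candidate width $l_0$ (to be optimized last), and sample an $n\times l_0$ $q$-ary matrix $M$ with all entries independent and uniform in $[q]$. Call a $t$-set $T$ of columns \emph{bad} if no row of $M$ is onto $[q]$ when restricted to $T$; a matrix is $t$-saturated precisely when it has no bad $t$-set, and the aim is to keep as many columns as possible while destroying all bad sets.

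The core estimate is the probability $\beta$ that a single fixed row, restricted to a fixed $t$-set $T$, fails to be surjective onto $[q]$; since rows are independent, $\Pr[T\text{ bad}]=\beta^{\,n}$. For part $(i)$, where $t=q$, a row restricted to $T$ is onto exactly when it is a bijection, so $\beta=1-q!/q^q$ \emph{exactly}. For part $(ii)$, where $t\ge q\ln q$, I would bound $\beta$ by a union bound over colors: a fixed color is absent from the row on $T$ with probability $(1-1/q)^t$, whence $\beta\le q(1-1/q)^t<q\,e^{-t/q}=:\gamma$. The hypothesis $t\ge q\ln q$ is exactly what makes $\gamma\le 1$, so that this estimate is meaningful; in part $(i)$ I simply set $\gamma=\beta=1-q!/q^q$.

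With these in hand I run the deletion step. Letting $X$ count the bad $t$-sets of $M$, linearity of expectation gives $\mathbb{E}[X]=\binom{l_0}{t}\beta^{\,n}\le l_0^{\,t}\beta^{\,n}\le l_0^{\,t}\gamma^{\,n}$. Deleting one column from each bad $t$-set removes at most $X$ columns and leaves no bad $t$-set intact, so some outcome of $M$ produces a genuinely $t$-saturated matrix on at least $l_0-l_0^{\,t}\gamma^{\,n}$ columns. Choosing $l_0$ to satisfy $l_0^{\,t-1}\gamma^{\,n}=\tfrac12$, i.e.\ $l_0=2^{-1/(t-1)}\gamma^{-n/(t-1)}$, forces $l_0^{\,t}\gamma^{\,n}=l_0/2$, so at least $l_0/2=2^{-t/(t-1)}\gamma^{-n/(t-1)}$ columns survive. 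Substituting the two values of $\gamma$ yields $2^{-q/(q-1)}\bigl(1-q!/q^q\bigr)^{-n/(q-1)}$ for $(i)$ and $2^{-t/(t-1)}\bigl(\tfrac1q e^{t/q}\bigr)^{n/(t-1)}$ for $(ii)$, matching the claimed bounds on $l$.

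I expect the main subtlety to be conceptual rather than computational: seeing that the alteration is exactly what buys the exponent $t-1$. A straightforward union bound demanding $\binom{l}{t}\beta^{\,n}<1$ only yields $l\lesssim\gamma^{-n/t}$, whereas trading a factor $l_0$ against the number of deleted columns (through $l_0^{\,t}\gamma^{\,n}=l_0\cdot l_0^{\,t-1}\gamma^{\,n}$) produces the $t-1$ in the denominator together with the precise constant $2^{-t/(t-1)}$. I would also double-check two routine but essential points: that deleting one column from every originally-bad $t$-set cannot leave any bad $t$-set among the survivors (it cannot, since every surviving $t$-set already existed in $M$ and would have lost a column had it been bad), and that the crude bound $\binom{l_0}{t}\le l_0^{\,t}$ is applied in the direction that preserves the stated constant.
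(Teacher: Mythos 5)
Your proposal is correct and follows essentially the same alteration argument as the paper's Appendix A proof: sample a random matrix, bound the expected number of bad $t$-sets by $l_0^{t}\beta^{n}$ (with $\beta=1-q!/q^q$ exactly for $t=q$ and $\beta\le q(1-1/q)^t<qe^{-t/q}$ via a union bound over missing colors for $t\ge q\ln q$), and delete one column per bad set; your parametrization (keep $l_0/2$ of $l_0$ columns) is just a relabeling of the paper's (keep $l$ of $2l$ columns) and yields the identical constants.
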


Notice that Lemma \ref{saturated} $(i)$ was originally proved in \cite{Fredman84} and Lemma \ref{saturated} $(ii)$ was stated in Proposition 2 of \cite{Chakrabortylistdecoding}, but with no proof.
For completeness we present the proof of this lemma in Appendix \ref{sec:proof_lem_saturated}.

\section{Complete multipartite graphs and digraphs}
\label{sec:multipartite}

\noindent In this section we prove Theorem \ref{polynomialbipartite} and Theorem \ref{polynomialdirected}.
We first prove the results for bipartite graphs, and then generalize them to multipartite graphs.

\subsection{Complete bipartite graphs}

\noindent Consider the complete bipartite graph $K_{m,n}$ with vertex parts $V_L=\{u_1,\ldots,u_m\}$ and $V_R=\{v_1,\ldots,v_n\}$.
Our goal is to design a guessing strategy for the vertices of $V_R$ which satisfies the condition of Lemma \ref{characterization}.
The following lemma shows that $t$-saturated matrices are sufficient for this purpose.

\begin{lemma}\label{completebipartite}
  If there exists an $n\times q^m$ $q$-ary $(m+1)$-saturated matrix $A$, then the complete bipartite graph $K_{m,n}$ is $q$-solvable.
\end{lemma}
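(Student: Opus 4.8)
The plan is to take the matrix $A$ itself as the guessing strategy for the right part $V_R$ and then verify that it satisfies the Hamming ball condition of Lemma \ref{characterization}. Concretely, I would declare that the vertex $v_i \in V_R$ guesses the color $A_{i,x}$ whenever it observes the left coloring $\chi(V_L) = x$; this is precisely the matrix-to-strategy correspondence set up above, where $A_{i,x}$ is the guess of $v_i$ on column $x \in [q]^m$. Once this strategy is fixed, everything reduces to controlling the size of the bad set $\ma{C}_y$ for each right coloring $y \in [q]^n$.

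The first step is to translate membership in $\ma{C}_y$ into a statement about the columns of $A$. By definition $x \in \ma{C}_y$ exactly when, under $\chi(V_L) = x$ and $\chi(V_R) = y$, no vertex of $V_R$ guesses correctly, which with our strategy means $A_{i,x} \neq y_i$ for every row $i \in [n]$. Thus $\ma{C}_y$ is just the collection of columns of $A$ that entrywise avoid the target vector $y$, and distinct elements of $\ma{C}_y \subseteq [q]^m$ correspond to distinct column indices of $A$.

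The key step is to show $|\ma{C}_y| \le m$ by invoking the $(m+1)$-saturation of $A$, and I would do this by contradiction. If $|\ma{C}_y| \ge m+1$, choose any $m+1$ distinct elements of $\ma{C}_y$ and let $T$ be the corresponding set of $m+1$ column indices. Saturation produces a row $r$ whose restriction to $T$ is onto $[q]$, so in particular some column $x \in T \subseteq \ma{C}_y$ satisfies $A_{r,x} = y_r$; but this says $v_r$ guesses correctly on $x$, contradicting $x \in \ma{C}_y$. Hence $|\ma{C}_y| \le m$, and Lemma \ref{hammingballtrivial} immediately places $\ma{C}_y$ inside some Hamming ball $B_{m-1}(a^y)$. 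Feeding this into Lemma \ref{characterization} yields the $q$-solvability of $K_{m,n}$.

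I do not expect a real obstacle in this argument: the saturation hypothesis is essentially engineered to forbid $\ma{C}_y$ from growing past $m$, and the reduction through Lemmas \ref{characterization} and \ref{hammingballtrivial} is mechanical. The only point that deserves care is the matching of parameters — a set of at most $m$ vectors is what Lemma \ref{hammingballtrivial} can cover with radius $m-1$, which is exactly why the right saturation threshold is $m+1$ rather than $m$ — together with keeping the identification between columns of $A$, colorings $x \in [q]^m$ of $V_L$, and elements of $\ma{C}_y$ straight throughout.
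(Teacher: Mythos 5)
Your proposal is correct and follows essentially the same route as the paper's proof: guess by the columns of $A$, use $(m+1)$-saturation to bound $|\ma{C}_y|\le m$ by contradiction, then apply Lemma \ref{hammingballtrivial} and Lemma \ref{characterization}. The only (harmless) cosmetic difference is that you explicitly select an $(m+1)$-subset of $\ma{C}_y$ before invoking saturation, which is if anything slightly more careful than the paper's phrasing.
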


\begin{proof}
Given an $n\times q^m$ $q$-ary $(m+1)$-saturated matrix $A$, index its rows and columns by the vertices of $V_R$ and the $q^m$ possible colorings of $V_L$, respectively.
The guess of $v_i$ given coloring $\chi(V_L)=x$ is $A_{i,x}$.
Next, recall that for a coloring $\chi(V_R)=y$ we define $$\ma{C}_y:=\{x\in[q]^m:\text{ given}~\chi(V_R)=y,~\text{if}~\chi(V_L)=x~\text{then}~\phi(V_R)=0\}.$$
We have the following claim.

\begin{claim}\label{core}
  For any coloring $y\in[q]^n$ assigned to $V_R$, $|\ma{C}_y|\le m$.
\end{claim}

Assume the claim is correct, then by Lemma \ref{hammingballtrivial} $\ma{C}_y$ is contained in a Hamming ball of radius $m-1$ centered at some $a^y\in[q]^m$, and by Lemma \ref{characterization} $K_{m,n}$ is $q$-solvable.

\vspace{5pt}

\noindent{\it Proof of Claim \ref{core}.} Assume to the contrary that there exists $y\in[q]^n$ with $|\ma{C}_y|\ge m+1$.
Since $A$ is $(m+1)$-saturated, there exists a row $i$ of $A$, indexed by a vertex $v_i\in V_R$, such that $\{A_{i,x}:x\in \ma{C}_y\}=[q]$. Therefore, vertex $v_i$ guesses its color correctly for at least one of the colorings in $\ma{C}_y$, which contradicts the definition of $\ma{C}_y$.
\end{proof}

By combining Lemma \ref{saturated} and Lemma \ref{completebipartite} we have the following proposition.

\begin{proposition}\label{explicitcompletebipartite}

  \begin{itemize}
    \item [$(i)$] There exists a constant $c_1$ not depending on $q$ such that $${\rm{HG}}(K_{q-1,c_1e^qq^{1.5}\ln q})\ge q.$$

    \item [$(ii)$] There exists a constant $c_2$ not depending on $q$ such that
    $${\rm{HG}}(K_{2q\ln q,c_2q^2(\ln q)^2})\ge q.$$
  \end{itemize}
\end{proposition}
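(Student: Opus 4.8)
The plan is to combine the two parts of Lemma \ref{saturated} with Lemma \ref{completebipartite}, choosing the parameters $m$ appropriately in each case. Lemma \ref{completebipartite} tells us that to prove ${\rm{HG}}(K_{m,n}) \ge q$, it suffices to exhibit an $n \times q^m$ $q$-ary $(m+1)$-saturated matrix. So in both parts I set $l = q^m$ (the number of columns must be the number of colorings of $V_L$) and ask Lemma \ref{saturated} how large $n$ needs to be so that such a matrix exists.

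For part $(i)$, I would take $m = q-1$ and apply Lemma \ref{saturated}$(i)$, which produces a $q$-saturated matrix; since $m+1 = q$, a $(m+1)$-saturated matrix is exactly a $q$-saturated one here, so the parameters line up perfectly. Plugging $l = q^{q-1}$ into the bound $l \le 2^{-q/(q-1)}\bigl(\frac{1}{1-q!/q^q}\bigr)^{n/(q-1)}$, I would solve for the smallest admissible $n$. The key asymptotic input is that $1 - q!/q^q = 1 - e^{-q+o(q)}$, so $\frac{1}{1-q!/q^q} \approx 1 + e^{-q}$, and taking logarithms one finds that $n$ of order $e^q q^{1.5}\ln q$ (up to the constant $c_1$) makes $q^{q-1}$ fit under the right-hand side. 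I would carry out this logarithmic estimate to pin down the exponent $1.5$ and confirm the $e^q$ factor.

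For part $(ii)$, I would take $m = 2q\ln q$ and apply Lemma \ref{saturated}$(ii)$ with $t = m+1 = 2q\ln q + 1$, noting that the hypothesis $t \ge q \ln q$ of part $(ii)$ is satisfied. Setting $l = q^m = q^{2q\ln q}$ in the bound $l \le 2^{-t/(t-1)}\bigl(\frac{1}{q}e^{t/q}\bigr)^{n/(t-1)}$, the crucial term in the base is $\frac{1}{q}e^{t/q} = \frac{1}{q}e^{2\ln q + o(1)} = q^{1+o(1)}$, so the base is roughly $q$ and its $(t-1)$-th power gives the required growth once $n$ is about $q^2(\ln q)^2$ times a constant $c_2$. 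Again I would take logarithms of both sides of the inequality, solve for $n$, and check that the dominant contribution is indeed of order $q^2(\ln q)^2$.

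The main obstacle is purely the asymptotic bookkeeping: each part reduces to verifying an inequality of the form $q^m \le (\text{base})^{n/(t-1)}$ after taking logarithms, and the delicate point is correctly estimating the bases ($1-q!/q^q$ in part $(i)$ and $\frac{1}{q}e^{t/q}$ in part $(ii)$) to the accuracy needed to extract the stated exponents. In part $(i)$ the subtlety is that $1-q!/q^q$ is exponentially close to $1$, so $\ln\bigl(\frac{1}{1-q!/q^q}\bigr)$ is exponentially small, which is precisely what forces the $e^q$ factor in $n$; I would use Stirling's approximation to control $q!/q^q$. In part $(ii)$ the subtlety is merely tracking the $\ln q$ factors through $t = 2q\ln q$. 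No genuinely new idea is required beyond Lemmas \ref{saturated} and \ref{completebipartite}; the constants $c_1, c_2$ are chosen at the end to absorb all lower-order terms.
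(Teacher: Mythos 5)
Your proposal is correct and follows exactly the paper's own route: the paper proves part $(i)$ by applying Lemma \ref{saturated}$(i)$ and Lemma \ref{completebipartite} with $m=q-1$, $l=q^{q-1}$, and part $(ii)$ by applying Lemma \ref{saturated}$(ii)$ and Lemma \ref{completebipartite} with $m=2q\ln q$, $t=m+1$, $l=q^m$, explicitly omitting the computations. Your asymptotic bookkeeping (Stirling for $q!/q^q$ giving the $e^q q^{1.5}\ln q$ scaling, and the base $\frac{1}{q}e^{t/q}=q^{1+o(1)}$ giving $n\approx(t-1)m$) correctly supplies the omitted details.
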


\begin{proof}
  The first statement applies  Lemma \ref{saturated} $(i)$ and Lemma \ref{completebipartite} with $n=c_1e^qq^{1.5}\log q$, $l=q^{q-1}$ and $m=q-1$ for some constant $c_1$.
  For the second statement one applies Lemma \ref{saturated} $(ii)$ and Lemma \ref{completebipartite} with  $n=c_2q^2(\ln q)^2$, $l=q^m,m=2q\ln q$ and $t=m+1$ for some constant $c_2$.
  We omit the computations. 
\end{proof}

For large enough $q$, Proposition \ref{explicitcompletebipartite} $(i)$ improves on the known result ${\rm{HG}}(K_{q-1,(q-1)^{q-1}})\ge q$ \cite{Gadouleau09}.
For large enough $n$, Proposition \ref{explicitcompletebipartite} $(ii)$ implies that asymptotically ${\rm{HG}}(K_{n,n})\ge n^{\fr{1}{2}-o(1)}$, which provides a positive answer to Question \ref{question1}. As a natural extension of this result, it is interesting to study the following question.

\begin{question}
Proposition \ref{explicitcompletebipartite} $(ii)$ implies that for $f(q)=\Theta(q\ln q)$ we have
${\rm{HG}}(K_{f(q), q^{2+o(1)}}) \geq q$.
Can this be improved? Specifically, can one show that for some function $g(q) = o(q \ln q)$ and constant $c \geq 1$, ${\rm{HG}}(K_{g(q), q^{c}}) \geq q$? Does this hold for $g(q) = \Theta(q)$?
\end{question}

\subsection{Complete multipartite graphs}

\noindent Similarly to Proposition \ref{explicitcompletebipartite}, Theorem \ref{polynomialbipartite} is proved by applying  Lemma \ref{saturated} and Lemma \ref{completebipartite} together with some additional combinatorial ideas.
We say that a graph $G$ on $n$ vertices is {\it partially $q$-solvable} with respect to a coloring set $\ma{C}\s[q]^n$ if there is a guessing strategy such that at least one vertex of $G$ guesses correctly, given the coloring is restricted to be one of the colorings in $\ma{C}$.
For example, $G$ is $q$-solvable if and only if it is partially $q$-solvable with respect to $\ma{C}=[q]^n$.

Given a set $\ma{C}\s[q]^{rm}$ and $y\in [q]^m$ we define ${\rm{Suffix}}(y)=\{x\in[q]^{(r-1)m}:~(x,y)\in\ma{C}\}$.
Using this notation with have the following lemma.

\begin{lemma}\label{partiallyq-solvable}
  For integers $q,m,r\ge 1$, the complete $r$-partite graph $K_{m,\ldots,m}$ with $m$ vertices in each part, is partially $q$-solvable with respect to any coloring set $\ma{C}\s[q]^{rm}$ of size at most $m^r$.
\end{lemma}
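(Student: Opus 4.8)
The plan is to prove the lemma by induction on $r$, peeling off one vertex part at a time and using the quantity ${\rm{Suffix}}(y)$ to split the workload between the last part and the remaining $r-1$ parts. For the base case $r=1$, the graph $K_m$ is the empty graph on $m$ vertices, so each vertex can only guess a constant; a set $\ma{C}\s[q]^m$ of size at most $m$ is exactly the kind of set that Lemma \ref{hammingballtrivial} places inside a Hamming ball $B_{m-1}(a)$, and having each vertex guess the corresponding coordinate of $a$ then guarantees a correct guess on every coloring in $\ma{C}$.

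For the inductive step I would assume the statement for $r-1$ and let $\ma{C}\s[q]^{rm}$ with $|\ma{C}|\le m^r$, writing each coloring as $(x,y)$ with $x\in[q]^{(r-1)m}$ the coloring of parts $1,\dots,r-1$ and $y\in[q]^m$ the coloring of part $r$. I would call $y$ \emph{heavy} if $|{\rm{Suffix}}(y)|>m^{r-1}$ and \emph{light} otherwise. Since $\sum_y|{\rm{Suffix}}(y)|=|\ma{C}|\le m^r$, a simple averaging bound shows there are at most $m-1$ heavy values of $y$. The key idea is that the light values will be handled by the first $r-1$ parts and the heavy values by the last part, exploiting that every vertex of parts $1,\dots,r-1$ sees all of part $r$ (hence sees $y$), while every vertex of part $r$ sees all of parts $1,\dots,r-1$ (hence sees $x$).

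Concretely, for the first $r-1$ parts the strategy would be: upon seeing a light $y$, the vertices of parts $1,\dots,r-1$ --- which induce a copy of the complete $(r-1)$-partite graph $K_{m,\dots,m}$ --- run the strategy guaranteed by the induction hypothesis for the set ${\rm{Suffix}}(y)\s[q]^{(r-1)m}$, which is legitimate precisely because $|{\rm{Suffix}}(y)|\le m^{r-1}$. This secures a correct guess on every $(x,y)\in\ma{C}$ with $y$ light. For the last part, upon seeing $x$, I would set $H_x=\{y:\ y\text{ heavy and }(x,y)\in\ma{C}\}$; since $|H_x|\le m-1<m$, Lemma \ref{hammingballtrivial} supplies a center $a^x\in[q]^m$ with $H_x\s B_{m-1}(a^x)$, and letting the $j$-th vertex of part $r$ guess $a^x_j$ forces a correct guess on every $(x,y)\in\ma{C}$ with $y$ heavy. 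Combining the two cases covers all of $\ma{C}$ and completes the induction.

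The one point that actually drives the argument is this counting balance: the averaging bound giving fewer than $m$ heavy values of $y$ is exactly what makes the trivial Hamming-ball lemma applicable to the last part, so the whole proof rests on matching the threshold $m^{r-1}$ to the $m$ vertices available in each part. The remaining bookkeeping --- checking that each vertex really does see the coordinates its prescribed strategy depends on, and that the $r=1$ empty-graph case lines up with Lemma \ref{hammingballtrivial} --- is routine, and I expect no genuine obstacle beyond getting this threshold-versus-part-size balance right.
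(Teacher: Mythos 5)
Your proof is correct and follows essentially the same route as the paper: induction on $r$, splitting colorings of the last part into heavy and light according to $|{\rm{Suffix}}(y)|$ versus $m^{r-1}$, handling light $y$ by the induction hypothesis on the first $r-1$ parts and heavy $y$ by Lemma \ref{hammingballtrivial} on the last part. The only (immaterial) difference is that the paper covers the whole set of at most $m$ heavy colorings by a single Hamming ball with a fixed center, whereas you let the center $a^x$ depend on the observed coloring $x$; both are valid.
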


\begin{proof}
  We apply induction on $r$.
  For $r=1$, we have the empty graph on $m$ vertices $v_1,\ldots,v_m$.
  Since $\ma{C}$ is of size at most $m$ it is contained in a Hamming ball of radius $m-1$, say, $B_{m-1}(a)$ for some $a\in [q]^m$.
  Clearly, if $v_i$ guesses $a_i$ for $1\le i\le m$, then at least one vertex guesses correctly, and the graph is partially $q$-solvable with respect to $\ma{C}$.
			
Next assume the statement is correct for $r-1$ and we  prove it for $r$.
Consider the complete $r$-partite graph $K_{m,\ldots,m}$ and let $\ma{C}\s[q]^{rm}$ be a set of colorings of size at most $m^{r}$.
Let $V_R$ be one of the $r$ parts of vertices, and let $V_L$ be the remaining $r-1$ parts.
Since $|\ma{C}|\le m^r$, there are at most $m$ $y$'s in $ [q]^m$  with $|{\rm{Suffix}}(y)|> m^{r-1}$. Denote this set of $y$'s by  $\ma{Y}$.
Clearly, since $|\ma{Y}|\leq m$ it is contained in a Hamming ball of radius $m-1$ centered at some $a$.
Notice that for $y\in [q]^m\backslash\ma{Y}$, $|{\rm{Suffix}}(y)|\le m^{r-1}$, and that the induced subgraph on $V_L$ is a complete $(r-1)$-partite graph.
Thus by the induction hypothesis it is partially $q$-solvable with respect to any ${\rm{Suffix}}(y)$ with $y\in [q]^m\backslash\ma{Y}$.
Next we define the guessing strategy for the $r$-partite graph.
For $1\le i\le m$, the $i$th vertex of $V_R$ guesses its color to be $a_i$.
The vertices of $V_L$ use the following strategy.
If $V_R$ is colored by a coloring $y\in \ma{Y}$, then each vertex guesses arbitrarily.
Otherwise, if $y\notin \ma{Y}$ then the vertices of $V_L$ use the strategy for the $(r-1)$-partite graph with respect to the colorings ${\rm{Suffix}}(y)$.
It is easy to verify that such a strategy ensures that at least one vertex guesses correctly, and the result follows.
\end{proof}

Combining the ideas introduced in the proofs of Lemma \ref{completebipartite} and Lemma \ref{partiallyq-solvable}, we have the following result.

\begin{proposition}\label{completemultipartite}
  If there exists an $n\times q^{(r-1)m}$ $q$-ary $t$-saturated matrix $A$ with $t\le m^{r-1}$, then the complete $r$-partite graph $K_{m,\ldots,m,n}$, in which there are $r-1$ vertex parts of size $m$ and one vertex part with size $n$, is $q$-solvable.
\end{proposition}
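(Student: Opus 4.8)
The plan is to combine the matrix-based strategy of Lemma~\ref{completebipartite} for the large part with the inductive partial-solvability strategy of Lemma~\ref{partiallyq-solvable} for the remaining parts, gluing the two together by conditioning on the coloring seen on the large part. Let $V_R$ be the part of size $n$ and let $V_L$ denote the union of the $r-1$ parts of size $m$, so that $|V_L|=(r-1)m$ and the induced subgraph on $V_L$ is the complete $(r-1)$-partite graph $K_{m,\ldots,m}$. First I would assign to $V_R$ exactly the strategy encoded by $A$: index the rows of $A$ by the vertices of $V_R$ and its columns by the $q^{(r-1)m}$ colorings $x\in[q]^{(r-1)m}$ of $V_L$, and let $v_i\in V_R$ guess $A_{i,x}$ whenever $\chi(V_L)=x$. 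This is legitimate since every vertex of $V_R$ sees all of $V_L$.

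Next, for each coloring $\chi(V_R)=y\in[q]^n$, I would define $\ma{C}_y$ exactly as in Lemma~\ref{characterization}, the set of colorings of $V_L$ on which no vertex of $V_R$ guesses correctly. Repeating the argument of Claim~\ref{core} essentially verbatim gives $|\ma{C}_y|<t$: if $|\ma{C}_y|\ge t$, then the $t$-saturation of $A$ produces a row $v_i$ whose restriction to the columns indexed by $\ma{C}_y$ is onto $[q]$, so $v_i$ guesses correctly on some coloring in $\ma{C}_y$, contradicting the definition of $\ma{C}_y$. Since $t\le m^{r-1}$, this yields $|\ma{C}_y|\le m^{r-1}$ for every $y$.

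The key step is then to let the vertices of $V_L$ tailor their strategy to the coloring $y$ they observe. Because the graph is complete multipartite, every vertex of $V_L$ sees all of $V_R$ (hence knows $y$) together with the other $r-2$ parts inside $V_L$; these latter are precisely the neighbors such a vertex has in $K_{m,\ldots,m}$. For each fixed $y$, the set $\ma{C}_y\s[q]^{(r-1)m}$ has size at most $m^{r-1}$, so by Lemma~\ref{partiallyq-solvable} applied to the $(r-1)$-partite graph $K_{m,\ldots,m}$ there is a guessing strategy for $V_L$ that is partially $q$-solvable with respect to $\ma{C}_y$. I would fix, once and for all, such a strategy for each possible $y$, and instruct the vertices of $V_L$ to run the strategy corresponding to the $y$ they see.

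Finally I would verify correctness on an arbitrary coloring $(x,y)\in[q]^{(r-1)m}\times[q]^n$: if $x\notin\ma{C}_y$ then by definition some vertex of $V_R$ guesses correctly, while if $x\in\ma{C}_y$ then the partial solvability of $V_L$ with respect to $\ma{C}_y$ guarantees a correct guess inside $V_L$. Either way at least one vertex guesses correctly, so $K_{m,\ldots,m,n}$ is $q$-solvable. The only real subtlety---and the place where I would be most careful---is the legitimacy of making the $V_L$-strategy depend on $y$: one must check that $y$ is visible to every $V_L$ vertex and that the strategy of Lemma~\ref{partiallyq-solvable} requires each vertex only to read its neighbors \emph{within} $V_L$, so that the two pieces of information a $V_L$ vertex uses are exactly the ones it can see. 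Matching the size bound $|\ma{C}_y|\le m^{r-1}$ to the hypothesis of Lemma~\ref{partiallyq-solvable} (and hence to the choice $t\le m^{r-1}$) is the other point that must be tracked precisely.
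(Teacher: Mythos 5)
Your proposal is correct and follows essentially the same route as the paper: the part of size $n$ guesses via the $t$-saturated matrix $A$, the saturation property bounds $|\ma{C}_y|$ below $t\le m^{r-1}$ exactly as in Claim~\ref{core}, and Lemma~\ref{partiallyq-solvable} supplies the $y$-dependent strategy for the remaining $(r-1)$-partite part. Your explicit check that each $V_L$ vertex can legitimately condition on $y$ is a point the paper leaves implicit, but the argument is the same.
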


\begin{proof}
  Split the vertices into two parts, say, $V_L$ and $V_R$, such that $V_L$ consists of the $r-1$ vertex parts of size $m$ and $V_R$ is the remaining part of size $n$.
  Index the rows and columns of $A$ by the vertices in $V_R$ and the $q^{(r-1)m}$ possible colorings of $V_L$, respectively.
  As before, we let the vertices in $V_R$ guess according to the $t$-saturated matrix $A$, i.e., given the coloring $\chi(V_L)=x$, the $i$th vertex of $V_R$ guesses $A_{i,x}$.
  Similarly to Claim \ref{core}, the property of $t$-saturated matrices guarantees that for any coloring $y$ of $V_R$, the set
  $$\ma{C}_y:=\{x\in[q]^{(r-1)m}:\text{ given}~\chi(V_R)=y,\text{ if }\chi(V_L)=x \text{ then }\phi(V_R)=0\}$$
  is of size at most $t-1$.
  It remains to design an appropriate guessing strategy for the vertices in $V_L$. Observe that since $t\le m^{r-1}$, Lemma \ref{partiallyq-solvable} implies that $K_{V_L}$, the induced subgraph on $V_L$ (which is a complete $(r-1)$-partite graph) is partially $q$-solvable with respect to $\ma{C}_y$.
  Thus the vertices of $V_L$ upon seeing the coloring $y$ of $V_R$, they guess according to a strategy for which $K_{V_L}$ is partially $q$-solvable with respect to $\ma{C}_y\s[q]^{(r-1)m}$.
  By combining the two guessing strategies for $V_L$ and $V_R$, one can conclude that at least one vertex must guess its color correctly.
\end{proof}

  Next we prove  Theorem \ref{polynomialbipartite}.

  \begin{proof}[\textbf{Proof of Theorem \ref{polynomialbipartite}}]
    By applying Lemma \ref{saturated} $(ii)$ with $n=c(q\ln q)^{\fr{r}{r-1}}$, $l=q^{(r-1)m},m=(2q\ln q)^{\fr{1}{r-1}}$ and $t=2q\ln q$ for some constant $c$, one obtains an $n\times q^{(r-1)m}$ $t$-saturated matrix with $t=m^{r-1}$.
    It thus follows from Proposition \ref{completemultipartite} that ${\rm{HG}}(K_{m,\ldots,m,n})\ge q$.
  \end{proof}

\subsection{Complete multipartite directed cycles}

\noindent In this section we prove Theorem \ref{polynomialdirected}.
The proof relies also on saturated matrices.
Recall that in the proof for (undirected) complete bipartite graphs, one part of vertices guesses according to a saturated matrix, whereas the second part guesses according to the Hamming ball condition.
The proof for complete $r$-partite directed cycles follows along the same lines as follows.
Each of the first $r-1$ vertex parts guesses according to a different saturated matrix, and the $r$th part guesses according to the Hamming ball condition.

Recall the definition of $\vec{C}_{n_1,\ldots,n_r}$ introduced in Subsection 1.1.
Let us first present the proof of Theorem \ref{polynomialdirected} for $r=3$.

\begin{proposition}\label{complete3directedcycle}
  If there exist an $n_1\times q^{n_2}$ $q$-ary $t_1$-saturated matrix $A_1$ and an $n_2\times q^{n_3}$ $t_2$-saturated matrix $A_2$ such that $t_1t_2\le e^{n_3/q}$,
  then the complete 3-partite directed cycle $\vec{C}_{n_1,n_2,n_3}$ is $q$-solvable.
\end{proposition}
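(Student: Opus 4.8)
The plan is to mimic the bipartite argument behind Lemma \ref{completebipartite}, adding one extra layer: I use $A_1$ and $A_2$ to control the first two parts and the Hamming ball condition to handle the third. Write $V_1,V_2,V_3$ for the three parts, of sizes $n_1,n_2,n_3$, so that (following the sight convention that $v_i\to v_j$ means $i$ sees $j$) every vertex of $V_1$ sees $V_2$, every vertex of $V_2$ sees $V_3$, and every vertex of $V_3$ sees $V_1$. I let $V_1$ guess by $A_1$, indexing its columns by the colorings of $V_2$, so that $v\in V_1$ guesses $(A_1)_{v,\chi(V_2)}$; likewise $V_2$ guesses by $A_2$, whose columns are indexed by colorings of $V_3$. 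The vertices of $V_3$ will guess by the Hamming ball condition, using only the coloring $\chi(V_1)$ that they see.

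First I would record the two ``local'' failure bounds coming from saturation, exactly as in Claim \ref{core}. For a fixed own-coloring $x^1\in[q]^{n_1}$ of $V_1$, set
$$\mathcal{F}^{(1)}_{x^1}=\{x^2\in[q]^{n_2}:\text{given }\chi(V_1)=x^1,\chi(V_2)=x^2,\text{ no vertex of }V_1\text{ guesses correctly}\}.$$
Since $A_1$ is $t_1$-saturated, $|\mathcal{F}^{(1)}_{x^1}|\le t_1-1$: otherwise $t_1$ of these colorings form a set of columns on which some row of $A_1$ is onto $[q]$, forcing a correct guess of that vertex. The analogous set $\mathcal{F}^{(2)}_{x^2}\subseteq[q]^{n_3}$ (colorings of $V_3$ for which $V_2$ fails, given $\chi(V_2)=x^2$) satisfies $|\mathcal{F}^{(2)}_{x^2}|\le t_2-1$ by the $t_2$-saturation of $A_2$.

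The crux is the following observation about which colorings of $V_3$ can survive. If under a coloring $(x^1,x^2,x^3)$ no vertex of $V_1\cup V_2$ guesses correctly, then $x^2\in\mathcal{F}^{(1)}_{x^1}$ and $x^3\in\mathcal{F}^{(2)}_{x^2}$; hence $x^3$ lies in
$$E(x^1):=\bigcup_{x^2\in\mathcal{F}^{(1)}_{x^1}}\mathcal{F}^{(2)}_{x^2},\qquad |E(x^1)|\le (t_1-1)(t_2-1)<t_1t_2\le e^{n_3/q}.$$
Because $V_3$ sees exactly $x^1$, this is precisely the point where the Hamming ball machinery applies: by Lemma \ref{hammingball}, for each $x^1$ there is a center $a^{x^1}\in[q]^{n_3}$ with $E(x^1)\subseteq B_{n_3-1}(a^{x^1})$, and I let $w\in V_3$ guess $a^{x^1}_w$. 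Now take any coloring $(x^1,x^2,x^3)$ and suppose no vertex of $V_1$ or $V_2$ guesses correctly; then $x^3\in E(x^1)\subseteq B_{n_3-1}(a^{x^1})$, so $x^3$ agrees with $a^{x^1}$ in at least one coordinate and some vertex of $V_3$ guesses correctly. Thus at least one vertex always guesses correctly, proving $q$-solvability.

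The step I expect to be the real obstacle is the bookkeeping of ``who sees whom'' so that each saturation is applied in the correct coordinate direction (the columns of $A_i$ must be indexed by the part it sees, and its failure set must live in that coordinate block), together with the key combinatorial idea that the set of dangerous $V_3$-colorings, viewed as a function of the $V_1$-coloring \emph{alone}, is a union of at most $t_1$ sets each of size below $t_2$, yielding the product bound $t_1t_2$ that exactly matches the Hamming ball radius threshold $e^{n_3/q}$. The generalization to arbitrary $r$ (Theorem \ref{polynomialdirected}) should then follow by iterating this union bound across the first $r-1$ parts, with the telescoping product of the $t_i$ playing the role of $t_1t_2$ here.
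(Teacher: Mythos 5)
Your proof is correct and follows essentially the same route as the paper's: both let $V_1,V_2$ guess via the saturated matrices, bound the surviving $V_3$-colorings by the union $\bigcup_{x^2\in\mathcal{F}^{(1)}_{x^1}}\mathcal{F}^{(2)}_{x^2}$ of size below $t_1t_2\le e^{n_3/q}$, and apply Lemma \ref{hammingball} to get a center $a^{x^1}$ (depending only on the $V_1$-coloring, which $V_3$ sees) for $V_3$ to guess. Your write-up is in fact slightly more explicit than the paper's about why the dangerous set depends only on $x^1$, which is the one subtle point.
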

\begin{proof}

  For $i=1,2,3$ let $V_i$ be the $i$th vertex part of $\vec{C}_{n_1,n_2,n_3}$ of size $n_i$.
  Denote the colors assigned to the vertex parts  $V_1,V_2,V_3$ by vectors $x\in[q]^{n_1},y\in[q]^{n_2},z\in[q]^{n_3}$, respectively.
  The vertices in $V_1$ and $V_2$ guess theirs colors according to the saturated matrices $A_1$ and $A_2$ respectively, as described in the proof of Lemma \ref{completebipartite}.
  Next we describe the guessing strategy for $V_3$.
  We will show that given the coloring of $V_1$ and $V_2$ there are at most $t_1t_2$ colorings of $V_3$ for which all the vertices of $V_1$ and $V_2$ guess incorrectly.
  Since by assumption $t_1t_2\le e^{n_3/q}$ then by Lemma \ref{hammingball} these vectors are contained in a Hamming ball of radius $n_3-1$,  centered at some $a$.
  Clearly, if upon seeing the coloring of $V_1$, the $i$th vertex of  $V_3$ guesses  $a_i$, i.e., the vertices in $V_3$ collectively guess the vector $a$, then at least one vertex of $V_3$ makes a correct guess and the result follows.

  For $x\in [q]^{n_1}$, define
	$$\ma{C}_x:=\{y\in[q]^{n_2}:\text{ given}~\chi(V_1)=x,\text{ if}~\chi(V_2)=y \text{ then } \phi(V_1)=0\},$$
  and for  $y\in [q]^{n_2}$, define
  $$\ma{C}_y:=\{z\in[q]^{n_3}:\text{ given}~\chi(V_2)=y,\text{ if}~\chi(V_3)=z \text{ then }\phi(V_2)=0\}.$$
  Suppose that for some colorings $\chi(V_1)=x,\chi(V_2)=y, \chi(V_3)=z$, all the vertices of $V_1\cup V_2$ guess incorrectly, then by definition $y\in\ma{C}_x$ and $z\in\ma{C}_y$. 	
  Hence we have
  $$z\in\cup_{y\in\ma{C}_x}\ma{C}_y.$$
  Since $A_1$ and $A_2$ are $t_1$ and $t_2$-saturated matrices, respectively, then $|\ma{C}_x|< t_1$ for $x\in [q]^{n_1}$, $|\ma{C}_y|< t_2$ for $y\in [q]^{n_2}$ and $|\cup_{y\in\ma{C}_x}\ma{C}_y|<t_1t_2$, which proves the result.
\end{proof}

The following more general result is proved using a similar argument.

\begin{proposition}\label{completemultidirectedcycle}
  If there exist $r-1$ matrices $A_1,\ldots,A_{r-1}$ such that
  \begin{itemize}
    \item [$(i)$] for each $1\le i\le r-1$, $A_i$ is an $n_i\times q^{n_{i+1}}$ $q$-ary $t_i$-saturated matrix;
    \item [$(ii)$] $\prod_{i=1}^{r-1} t_i\le e^{n_r/q}$.
  \end{itemize}
  Then the complete $r$-partite directed cycle $\vec{C}_{n_1,\ldots,n_r}$ is $q$-solvable.
\end{proposition}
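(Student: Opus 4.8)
The plan is to follow exactly the inductive/telescoping structure already established in the proof of Proposition \ref{complete3directedcycle}, generalizing the two-matrix composition to an $(r-1)$-fold composition. Let $V_1,\ldots,V_r$ denote the vertex parts of $\vec{C}_{n_1,\ldots,n_r}$, of sizes $n_1,\ldots,n_r$, and write $x^{(i)}\in[q]^{n_i}$ for the coloring of $V_i$. The key structural fact is that in the directed cycle, each vertex of $V_i$ sees exactly the colors of $V_{i+1}$ (indices mod $r$). So I would let the vertices of $V_i$ for $1\le i\le r-1$ guess according to the $t_i$-saturated matrix $A_i$, exactly as in Lemma \ref{completebipartite}: given $\chi(V_{i+1})=x^{(i+1)}$, the $j$th vertex of $V_i$ guesses $(A_i)_{j,\,x^{(i+1)}}$. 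The vertices of $V_r$ will instead guess via the Hamming ball condition, seeing the coloring of $V_1$.

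The heart of the argument is to bound the number of colorings of $V_r$ that defeat the first $r-1$ parts. For each $1\le i\le r-1$ and each $x^{(i+1)}\in[q]^{n_{i+1}}$, define
$$\ma{C}_{x^{(i+1)}}:=\{x^{(i)}\in[q]^{n_i}:\text{given }\chi(V_{i+1})=x^{(i+1)},\text{ if }\chi(V_i)=x^{(i)}\text{ then }\phi(V_i)=0\},$$
so that the $t_i$-saturation of $A_i$ forces $|\ma{C}_{x^{(i+1)}}|<t_i$, by the same contradiction argument as in Claim \ref{core}. Now suppose every vertex of $V_1\cup\cdots\cup V_{r-1}$ guesses incorrectly under some coloring. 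Then by definition $x^{(i)}\in\ma{C}_{x^{(i+1)}}$ for each $1\le i\le r-1$; note that $x^{(r-1)}\in\ma{C}_{x^{(r)}}$ where $x^{(r)}=z$ is the coloring of $V_r$ seen by $V_{r-1}$. I would then chain these memberships: the set of colorings $z\in[q]^{n_r}$ that can arise as the "bad" suffix is contained in the nested union
$$\bigcup_{x^{(2)}\in\ma{C}_{x^{(1)}}}\ \bigcup_{x^{(3)}\in\ma{C}_{x^{(2)}}}\cdots\bigcup_{z\in\ma{C}_{x^{(r-1)}}}\{z\},$$
whose cardinality is bounded by $\prod_{i=1}^{r-1}t_i$ since each layer contributes a factor of at most $t_i$. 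This telescoping bound is the main point to get right; it is a clean induction but one must track the direction of the conditioning carefully, as the defeat of $V_i$ constrains $x^{(i)}$ in terms of $x^{(i+1)}$, so the union is indexed from the outside in.

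By hypothesis $(ii)$, $\prod_{i=1}^{r-1}t_i\le e^{n_r/q}$, so this set of at most $e^{n_r/q}$ bad colorings of $V_r$ is contained in a Hamming ball $B_{n_r-1}(a)$ for some $a\in[q]^{n_r}$ by Lemma \ref{hammingball}. Letting the $j$th vertex of $V_r$ guess $a_j$ upon seeing the coloring of $V_1$ ensures that whenever all of $V_1,\ldots,V_{r-1}$ fail, the coloring $z$ lies in $B_{n_r-1}(a)$ and hence agrees with $a$ in at least one coordinate, so some vertex of $V_r$ guesses correctly. Combining the two strategies shows that at least one vertex always guesses correctly, and the result follows. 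I expect the main obstacle to be purely bookkeeping, namely writing the nested-union cardinality bound cleanly and confirming that the sight relations in $\vec{C}_{n_1,\ldots,n_r}$ route the conditioning in the direction the telescoping requires; the probabilistic content is entirely absorbed into Lemma \ref{hammingball}.
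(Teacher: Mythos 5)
Your overall plan coincides with the paper's: let $V_1,\dots,V_{r-1}$ guess via the saturated matrices $A_1,\dots,A_{r-1}$, bound the bad colorings of $V_r$ by a telescoping product of the $t_i$, and finish with Lemma \ref{hammingball} and a Hamming-ball guess for $V_r$, whose center is a function of $\chi(V_1)$ alone and hence visible to $V_r$. However, your key definition has the conditioning reversed, and as written the claimed bound is false. You set
$$\ma{C}_{x^{(i+1)}}:=\{x^{(i)}\in[q]^{n_i}:\text{given }\chi(V_{i+1})=x^{(i+1)},\text{ if }\chi(V_i)=x^{(i)}\text{ then }\phi(V_i)=0\},$$
i.e., you fix the coloring of the \emph{seen} part $V_{i+1}$ and collect the colorings of the \emph{guessing} part $V_i$ that defeat it. But once $x^{(i+1)}$ is fixed, each vertex of $V_i$ makes a single deterministic guess, so this set is exactly the set of vectors disagreeing with the guess vector of $V_i$ in every coordinate; it has size $(q-1)^{n_i}$, and $t_i$-saturation says nothing about it. The Claim \ref{core} argument requires the opposite orientation: fix $\chi(V_i)=x^{(i)}$ and let $\ma{C}_{x^{(i)}}\subseteq[q]^{n_{i+1}}$ be the set of colorings of $V_{i+1}$ under which every vertex of $V_i$ fails. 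If this set contained $t_i$ elements, saturation would yield a row $j$ of $A_i$ that is onto $[q]$ when restricted to those columns, hence equal to the fixed value $x^{(i)}_j$ on one of them, a contradiction; this is what gives $|\ma{C}_{x^{(i)}}|<t_i$.

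Your later displays are in fact written in this corrected convention (the nested union $\bigcup_{x^{(2)}\in\ma{C}_{x^{(1)}}}\cdots$ only parses if $\ma{C}_{x^{(1)}}$ consists of colorings of $V_2$), while the sentence asserting $x^{(r-1)}\in\ma{C}_{x^{(r)}}$ uses the reversed one, so the write-up is internally inconsistent exactly at the step you yourself flagged as the one to get right. Once the definition is flipped as above, the chain of memberships becomes $x^{(i+1)}\in\ma{C}_{x^{(i)}}$ for $1\le i\le r-1$, the nested union over the bad suffixes has size less than $\prod_{i=1}^{r-1}t_i\le e^{n_r/q}$, and the remainder of your argument goes through verbatim and reproduces the paper's proof.
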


\begin{proof}
  The proof is very similar to that of Proposition \ref{complete3directedcycle}, and is roughly sketched as follows.
  For  $1\le i\le r$, let $x^i\in[q]^{n_i}$ be the colors assigned to the vertices of $V_i$.
  For $1\le i\le r-1$ the vertices in $V_i$ guess their colors according to $A_i$, as described in the proof of Lemma \ref{completebipartite}.
  Similar to the proof of Proposition \ref{complete3directedcycle}, it suffices to show that for any given set of colorings of $V_1,\ldots,V_{r-1}$, the set of colorings of $V_r$ for which all the vertices in $\cup_{i=1}^{r-1} V_i$ guess incorrectly, must be contained in some Hamming ball of radius $n_r-1$ in $[q]^{n_r}$.

  To this end, for $1\le i\le r-1$ and $x^i\in[q]^{n_i}$, define
  $$\ma{C}_{x^i}:=\{y\in[q]^{n_{i+1}}:\text{ given}~\chi(V_i)=x^i,\text{ if }\chi(V_{i+1})=y \text{ then }\phi(V_i)=0\}.$$

  \noindent Suppose that there exist colorings $\chi(V_i)=x^i$ for $1\le i\le r$ such that all the vertices in $\cup_{i=1}^{r-1}V_i$ guess incorrectly.
  By the definition of $\ma{C}_{x^i}$ it holds that
  for each $1\le i\le r-1$, $x^{i+1}\in\ma{C}_{x^i}$, which implies that

  \begin{equation}\label{stam}
      x^r\in \cup_{y^2\in\ma{C}_{x^1}}\cup_{y^3\in\ma{C}_{y^2}}\cdots\cup_{y^{r-1}\in\ma{C}_{y^{r-2}}}\ma{C}_{y^{r-1}},
  \end{equation}

\noindent where $x^1\in[q]^{n_1},y^2\in[q]^{n_2},\ldots,y^{r-1}\in[q]^{r-1}$.
Since for $1\le i\le r-1$, $A_i$ is $t_i$-saturated, we have that
$$|\ma{C}_{x^1}|< t_{1},~|\ma{C}_{y^2}|< t_{2},~\ldots,~|\ma{C}_{y^{r-1}}|< t_{r-1},$$
and that the size of the union in \eqref{stam} is at most $\prod_{i=1}^{r-1}t_i< e^{n_r/q}$.
Thus, by Lemma \ref{hammingball} these vectors are contained in some Hamming ball of radius $n_r-1$ in $[q]^{n_r}$, as required.
\end{proof}

Next we present the proof of Theorem \ref{polynomialdirected}.

\begin{proof}[\textbf{Proof of Theorem \ref{polynomialdirected}}]
We construct saturated matrices $A_1,\ldots,A_{r-1}$ as required in Proposition \ref{completemultidirectedcycle} by invoking Lemma \ref{saturated} $(ii)$ as follows.
For $1\le i\le r-1$, set $t_i=2q\ln q$, then there exists an $n_i\times q^{n_{i+1}}$ $q$-ary $t_i$-saturated matrix with $n_i= n_{i+1}(4q\ln q).$
It remains to pick the value of $n_r$ satisfying Proposition \ref{completemultidirectedcycle} $(ii)$.
Since $\prod_{i=1}^{r-1}t_i=(2q\ln q)^{r-1}=e^{(r-1)\ln(2q\ln q)}$, one can set $n_r=(r-1)q\ln(2q\ln q)$.

To conclude, for $t_1=\cdots=t_{r-1}=2q\ln q$ and
$$n_i=n_r(4q\ln q)^{r-i}=(r-1)\ln(2q\ln q)(4\ln q)^{r-i}q^{r+1-i}$$ for $1\le i\le r$, there exist matrices $A_1,\ldots,A_{r-1}$ satisfying both conditions of Lemma \ref{completemultidirectedcycle}.
Consequently, it holds that ${\rm{HG}}(\vec{C}_{n_1,\ldots,n_r})\ge q$.
\end{proof}

\section{Hat guessing number and other graph parameters}
\label{sec:bounded_deg}

\noindent In this section, we  present the proofs of Theorem \ref{maxdeg} and Theorem \ref{onemoreobservation}, whereas the proof of Theorem \ref{trees} is postponed to Appendix \ref{sec:proof_thm_trees}.
Although Theorem \ref{maxdeg} is a folklore \cite{Farnik2015}, we include its proof for completeness.
Both proofs make use of the Lov\'asz Local Lemma \cite{locallemma}, which states the following.

\begin{lemma}[Lov\'asz Local Lemma, Shearer's version \cite{Shearerversion}]\label{shearer}
  Let $A_1,\ldots,A_n$ be events in an arbitrary probability space. Suppose that each event $A_i$ is mutually independent of a set of all other events $A_j$
  but at most $d$, and that $\Pr[A_i]\le p$ for all $1\le i\le n$. If $epd\le1$, then $\Pr[\bigwedge_{i=1}^n\overline{A_i}]>0$.
\end{lemma}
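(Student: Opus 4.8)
The statement is the symmetric form of the Lovász Local Lemma, so the plan is to obtain it as a specialization of the general (asymmetric) version, which admits a short self-contained proof by induction. First I would record the general form. Suppose a graph $D$ on the index set $\{1,\dots,n\}$ is a \emph{dependency graph} for the events, meaning that each $A_i$ is mutually independent of the family $\{A_j : \{i,j\}\notin E(D),\ j\neq i\}$, and suppose there are reals $x_1,\dots,x_n\in[0,1)$ with
\[
\Pr[A_i]\le x_i\prod_{\{i,j\}\in E(D)}(1-x_j)\qquad\text{for all }i.
\]
Then $\Pr\big[\bigwedge_{i=1}^n\overline{A_i}\big]\ge\prod_{i=1}^n(1-x_i)>0$. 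The symmetric hypotheses of the lemma fit this framework via the dependency graph in which $i$ and $j$ are adjacent precisely when $A_i$ and $A_j$ fail to be independent; by assumption every vertex then has degree at most $d$.

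The general form I would prove by establishing, by strong induction on $|S|$, the inequality $\Pr[A_i\mid\bigwedge_{j\in S}\overline{A_j}]\le x_i$ for every $i\notin S$ (the base case $S=\emptyset$ being the hypothesis on $\Pr[A_i]$). For the inductive step I would partition $S=S_1\cup S_2$, where $S_1=\{j\in S:\{i,j\}\in E(D)\}$ collects the neighbors of $i$ and $S_2$ is the rest, and write the conditional probability as a ratio whose conditioning event is $\bigwedge_{k\in S_2}\overline{A_k}$. The numerator $\Pr[A_i\wedge\bigwedge_{j\in S_1}\overline{A_j}\mid\bigwedge_{k\in S_2}\overline{A_k}]$ is at most $\Pr[A_i\mid\bigwedge_{k\in S_2}\overline{A_k}]=\Pr[A_i]$, using that $A_i$ is independent of the events indexed by $S_2$; the denominator $\Pr[\bigwedge_{j\in S_1}\overline{A_j}\mid\bigwedge_{k\in S_2}\overline{A_k}]$ is at least $\prod_{j\in S_1}(1-x_j)\ge\prod_{\{i,j\}\in E(D)}(1-x_j)$, by expanding it as a telescoping product of conditional probabilities and applying the induction hypothesis to the strictly smaller index sets that arise. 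Combining these two bounds with the hypothesis on $\Pr[A_i]$ gives the ratio $\le x_i$. Telescoping then yields $\Pr[\bigwedge_i\overline{A_i}]=\prod_{i=1}^n\big(1-\Pr[A_i\mid\bigwedge_{j<i}\overline{A_j}]\big)\ge\prod_i(1-x_i)>0$.

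Finally I would specialize. With a uniform degree bound $d$ and $\Pr[A_i]\le p$ for all $i$, the natural choice $x_i=1/(d+1)$ gives $x_i\prod_{\{i,j\}\in E(D)}(1-x_j)\ge\frac{1}{d+1}\big(1-\tfrac1{d+1}\big)^d\ge\frac{1}{e(d+1)}$, where the last inequality is $(1+1/d)^d\le e$; hence the hypothesis of the general form holds as soon as $p\le\frac{1}{e(d+1)}$. The main obstacle is that this crude uniform choice of weights delivers only the threshold $ep(d+1)\le1$, whereas the statement asserts the sharper $epd\le1$. Closing this gap is exactly the content of Shearer's tight analysis: one replaces the uniform weights by optimal ones and shows that the true critical probability for a degree-$d$ dependency graph is $(d-1)^{d-1}/d^d$, which exceeds $1/(ed)$. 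Since $epd\le1$ is equivalent to $p\le1/(ed)$ and thus places $p$ strictly below this critical value, the conclusion follows; this sharper final step is the one I would cite from \cite{Shearerversion} rather than reprove.
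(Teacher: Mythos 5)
The paper does not prove this lemma at all: it is quoted as a known external result with a citation to Shearer, and the text immediately moves on to its applications (Theorems \ref{maxdeg} and \ref{onemoreobservation}). So there is no in-paper argument to compare yours against; what you have written is a reconstruction, and it is essentially sound. Your inductive proof of the asymmetric version is the standard one and is correct, and you are right, and commendably honest, that the uniform choice $x_i=1/(d+1)$ only yields the threshold $ep(d+1)\le 1$, while the constant $epd\le 1$ actually stated in the lemma is precisely Shearer's sharper contribution: the critical probability for a degree-$d$ dependency graph is $(d-1)^{d-1}/d^d=\frac{1}{d}(1-\frac{1}{d})^{d-1}\ge\frac{1}{ed}$, so $p\le 1/(ed)$ lies below it. Deferring that last step to \cite{Shearerversion} is exactly what the paper itself does, so your proposal proves strictly more than the paper records.

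One slip worth fixing: your definition of the dependency graph, ``$i$ and $j$ are adjacent precisely when $A_i$ and $A_j$ fail to be independent,'' is not the right object. Mutual independence of $A_i$ from a \emph{set} of events is strictly stronger than pairwise independence from each of its members, so the graph you describe need not be a valid dependency graph for the hypothesis as stated. The correct move is to take, for each $i$, the set $D_i$ of at most $d$ indices guaranteed by the hypothesis (so that $A_i$ is mutually independent of $\{A_j: j\notin D_i\cup\{i\}\}$) and run the induction with these sets, i.e.\ with a dependency digraph of out-degree at most $d$; your partition $S=S_1\cup S_2$ and the rest of the argument go through verbatim with $S_1=S\cap D_i$. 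This is a local repair, not a structural flaw.
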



\begin{proof}[\textbf{Proof of Theorem \ref{maxdeg}}]
  Assume $G$ has $n$ vertices $v_1,\ldots,v_n$ and let us assign colors of $[q]$ to each vertex in $G$ independently and uniformly at random.
  Given a guessing strategy, let $A_i, 1\le i\le n$  be the event that $v_i$ guesses correctly.
  To prove the theorem it suffices to show that $\Pr[\bigwedge_{i=1}^n\overline{A_i}]>0$ for $q\ge e\Delta$.
  It is easy to verify that for any guessing strategy $\Pr[A_i]=\fr{1}{q}$.
  For each $i\in [n]$, let $N_i$ be the set of neighbors of $v_i$ in $G$.
  Next, we have the following claim.

 \begin{claim}\label{lovasz1}
   For each $i\in [n]$, $A_i$ is mutually independent of $\{A_j:j\in[n]\setminus\{i\},~v_j\not\in N_i\}$.
 \end{claim}

  The proof of the theorem follows easily from the above claim as follows.
  Since $G$ has maximum degree $\Delta$, by Claim \ref{lovasz1} any event $A_i$ is mutually independent of the $A_j$'s $j\neq i$ but at most $\Delta$ of them.
  Therefore, by Lemma \ref{shearer} $\Pr[\bigwedge_{i=1}^n\overline{A_i}]>0$ for $q\ge e\Delta$; that is, with positive  probability no vertex guesses correctly.

 \vspace{5pt}

 \noindent{\it Proof of Claim \ref{lovasz1}.}
 For each $i\in[n]$, let $M_i=\{j\in[n]\setminus\{i\}:v_j\not\in N_i\}$.
 Let $M\s M_i$ be an arbitrary subset.
 Observe that for given $\chi(N_i)$, the event $A_i$ is independent of the event $\bigwedge_{j\in M}A_j$.
 Then

 \begin{equation*}
   \begin{aligned}
     \Pr[A_i\wedge(\bigwedge_{j\in M}A_j)]&=\sum_{y\in[q]^{|N_i|}}\Pr[A_i\wedge(\bigwedge_{j\in M}A_j)|\chi(N_i)=y]
     \Pr[\chi(N_i)=y]\\
     &=\sum_{y\in[q]^{|N_i|}}\Pr[A_i|\chi(N_i)=y]\Pr[\bigwedge_{j\in M}A_j|\chi(N_i)=y]\Pr[\chi(N_i)=y]\\
     &=\fr{1}{q}\sum_{y\in[q]^{|N_i|}}\Pr[\bigwedge_{j\in M}A_j|\chi(N_i)=y]\Pr[\chi(N_i)=y]\\
     &=\Pr[A_i]\Pr[\bigwedge_{j\in M}A_j].\\
   \end{aligned}
 \end{equation*}
 as desired.
\end{proof}
The proof of Theorem \ref{onemoreobservation} is yet another application of the Lov\'asz Local Lemma.

\begin{proof}[\textbf{Proof of Theorem \ref{onemoreobservation}}]
  To prove the theorem we show that $G$ is not $edq^k$-solvable.
  Let $V_{\le k}$ denote the collection of vertices with degree at most $k$ and let $V_{>k}$ denote the remaining set of vertices.
  We have the following claim.

  \begin{claim}\label{lovasz2}
    With the above notation, there exists an assignment of colors to vertices in $V_{\le k}$ with colors in $[edq^k]$, so that all vertices in $V_{\le k}$ guess incorrectly for any possible coloring of the vertices in $V_{>k}$ with colors in $[q]$.
  \end{claim}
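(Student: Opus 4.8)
The plan is to select a coloring of the vertices in $V_{\le k}$ uniformly and independently at random from $[edq^k]$, and to use the Lov\'asz Local Lemma (Lemma \ref{shearer}) to argue that with positive probability this single coloring simultaneously defeats every vertex of $V_{\le k}$, no matter how $V_{>k}$ is colored with colors of $[q]$. For each $v\in V_{\le k}$ I would introduce one bad event $A_v$: the event that there is \emph{some} coloring of the neighbors of $v$ lying in $V_{>k}$, using colors of $[q]$, for which $v$ guesses its own color correctly (its own color and those of its neighbors inside $V_{\le k}$ being read off the random coloring). Note that $\bigwedge_{v\in V_{\le k}}\overline{A_v}$ is precisely the assertion of the claim, since if no $A_v$ occurs then for every $[q]$-coloring of $V_{>k}$, and in particular the one it induces on the $V_{>k}$-neighbors of each $v$, all vertices of $V_{\le k}$ guess incorrectly.

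To bound $\Pr[A_v]$, fix a coloring $w$ of the neighbors of $v$ in $V_{>k}$. The guess of $v$ depends only on the colors of its neighbors and hence is independent of the color of $v$ itself, which is uniform on $[edq^k]$; thus $v$ guesses correctly under $w$ with probability exactly $1/(edq^k)$. Since $v$ has at most $k$ neighbors in $V_{>k}$, there are at most $q^k$ such colorings $w$, so the union bound gives $\Pr[A_v]\le q^k\cdot\frac{1}{edq^k}=\frac{1}{ed}=:p$. For the dependencies, observe that $A_v$ is a function of the random colors of $v$ and of its neighbors inside $V_{\le k}$ only, so $A_v$ and $A_{v'}$ are mutually independent whenever these two vertex sets are disjoint; this can fail only when $v$ and $v'$ are within distance $2$ in the induced subgraph on $V_{\le k}$. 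By hypothesis $(ii)$ there are at most $d$ such vertices $v'$, so $A_v$ is mutually independent of all but at most $d$ of the remaining events. Since $epd=e\cdot\frac{1}{ed}\cdot d=1$, Lemma \ref{shearer} yields $\Pr[\bigwedge_{v\in V_{\le k}}\overline{A_v}]>0$, and any coloring in the support of this event is the desired assignment.

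The main point to get right is the granularity of the bad events. The naive choice of one event per pair $(v,w)$ has probability $1/(edq^k)$ but dependency degree of order $dq^k$, which only yields $epd\approx (d+1)/d>1$ and fails. Folding the existential quantifier over the $V_{>k}$-colorings into a single event $A_v$ per vertex is exactly what moves the factor $q^k$ out of the dependency degree and into the probability via the union bound, so that the two contributions cancel and the product $epd$ lands at $1$. Everything else is a routine verification that the colors are independent and that the guess of $v$ does not depend on its own color.
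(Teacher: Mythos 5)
Your proof is correct and follows essentially the same route as the paper: one bad event per vertex of $V_{\le k}$ absorbing the existential quantifier over the $[q]$-colorings of its $V_{>k}$-neighbors, the bound $\Pr[A_v]\le q^k/(edq^k)=1/(ed)$, mutual independence beyond distance $2$ in $V_{\le k}$, and Shearer's Local Lemma with $epd\le 1$. The paper's proof is the same argument written out with explicit conditioning on the colors of $N_{i,\le k}$.
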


   Assuming the correctness of the claim, the theorem follows immediately.
   Indeed, color the vertices of $V_{\le k}$ with the coloring $\chi(V_{\le k})=x_{\le k}$ guaranteed by the claim.
   Since the induced subgraph on $V_{>k}$ is not $q$-solvable, there exists a coloring $\chi(V_{>k})=x_{>k}$ with colors in $[q]$ which makes all the vertices in $V_{>k}$ guess incorrectly given that $\chi(V_{\le k})=x_{\le k}$.
   Therefore, $x_{\le k}$ and $x_{>k}$ form a coloring of the vertices of $G$  for which they all guess incorrectly.

  \vspace{5pt}
  \noindent{\it Proof of Claim \ref{lovasz2}.}
  Without loss of generality, let $V_{\le k}=\{v_1,\ldots,v_m\}$, and for $v_i\in V_{\le k}$ let $N_{i, \leq k}$ and $N_{i, >k}$ be the sets of neighbors of $v_i$ in $V_{\leq k}$ and $V_{>k}$, respectively.	
  Assign independently and uniformly at random to each vertex in $V_{\le k}$ and $V_{> k}$ a hat color from $[edq^k]$ and $[q]$, respectively. 	
  For $1\le i\le m$, let $\ma{A}_i$ be the event that $v_i$ receives a hat color $\chi(v_i)\in [edq^k]$ such that there exists a hat coloring of $N_{i, >k}$ for which $v_i$ guesses its color correctly.
  Let $f_i$ be the guessing strategy of $v_i$, then
	 \begin{equation*}
    \begin{aligned}
   \Pr[\ma{A}_i]&=\sum_{y\in [edq^k]^{|N_{i, \leq k}|}}\Pr[\ma{A}_i|\chi(N_{i, \leq k})=y]\Pr[\chi(N_{i, \leq k})=y]\\
	&=\sum_{y\in [edq^k]^{|N_{i, \leq k}|}}\Pr[\chi(v_i)\in \{f_i\big(\chi(N_{i, \leq k})=y,\chi(N_{i, >k})=x\big):x\in [q]^{|N_{i, >k}|}\}] \frac{1}{(edq^k)^{|N_{i, \leq k}|}}\\
&=\sum_{y\in [edq^k]^{|N_{i,\leq k}|}}
\frac{|\{f_i\big(\chi(N_{i, \leq k})=y,\chi(N_{i, >k})=x\big):x\in [q]^{|N_{i, >k}|}\}|}{edq^k}
\frac{1}{(edq^k)^{|N_{i, \leq k}|}}\\
	&\leq  \sum_{y\in [edq^k]^{|N_{i,\leq k}|}}\frac{q^k}{edq^k} \frac{1}{(edq^k)^{|N_{i, \leq k}|}}\\
	&=  \frac{1}{ed}.	
	    \end{aligned}
  \end{equation*}
  	
Fix $1\le i\le m$ and let $\ma{M}_i=\{j\in[m]:v_j\text{ is of distance at least 3 from }v_i\}$. We will show that $\ma{A}_i$ is mutually independent of the events $\{\ma{A}_j : j\in \ma{M}_i\}.$
Let $\ma{M}\subseteq \ma{M}_i$ be an arbitrary subset, 
then we have the following two observations. $(i)$ For any $y\in[edq^k]^{|N_{i,\le k}|}$, since $N_{i, \leq k}\cap N_{j, \leq k}=\emptyset$ for any $j\in \ma{M} \subseteq \ma{M}_i$, the event $\{\chi(N_{i, \leq k})=y\}$ is independent of the event $\bigwedge_{j\in \ma{M}}\ma{A}_j$. $(ii)$ Given the coloring $\chi(N_{i, \leq k})=y$, the event $\ma{A}_i$ is independent of the event $\bigwedge_{j\in \ma{M}}\ma{A}_j$, since in this case $\ma{A}_i$ happens if and only if $$\chi(v_i)\in\{f_i\big(\chi(N_{i, \leq k})=y,\chi(N_{i, >k})=x\big):x\in [q]^{|N_{i, >k}|}\}.$$ 
Then
\begin{align*}
\Pr[\ma{A}_i|\bigwedge_{j\in\ma{M}}\ma{A}_j)]&
=\sum_{y\in [edq^k]^{|N_{i, \leq k}|}}\Pr[\ma{A}_i|\big(\chi(N_{i, \leq k})=y\big)\wedge \big(\bigwedge_{j\in\ma{M}}\ma{A}_j\big)]
\Pr[\chi(N_{i, \leq k})=y| \bigwedge_{j\in\ma{M}}\ma{A}_j]\\
&=\sum_{y\in [edq^k]^{|N_{i, \leq k}|}}\Pr[\ma{A}_i|\big(\chi(N_{i, \leq k})=y\big)\wedge \big(\bigwedge_{j\in\ma{M}}\ma{A}_j\big)]
\Pr[\chi(N_{i, \leq k}=y]\\
&=\sum_{y\in [edq^k]^{|N_{i, \leq k}|}}\Pr[\ma{A}_i|\chi(N_{i, \leq k})=y]
\Pr[\chi(N_{i, \leq k})=y]\\
&=\Pr[\ma{A}_i],
\end{align*}
where the second and the third equalities follow from $(i)$ and $(ii)$ respectively.

The claim follows by applying Lemma \ref{shearer} with  $p=\fr{1}{ed}$.
\end{proof}

%
%


\section{Linear hat guessing numbers}
\label{sec:linear}

\noindent In this section we consider the case where the players restrict their guessing strategies to  affine functions
of the hat colors assigned to their neighbors, where we always assume that the number of possible hat colors is a prime power.
In what follows we present upper bounds on the linear hat guessing number for various graph families, e.g., cycles, degenerate graphs and graphs with bounded minimum rank.
In particular, in Theorem \ref{Linearforcycles} and Theorem \ref{linearK_n,n}  we show  that  nonlinear guessing strategies can (significantly) outperform linear ones.

Our main tool in this section is the Combinatorial Nullstellensatz \cite{AlonCombi} given next.

\begin{lemma}[Combinatorial Nullstellensatz, see Theorem 1.2, \cite{AlonCombi}]
  Let $\mathbb{F}$ be an arbitrary field, and let $f=f(x_1,\ldots,x_n)$ be a polynomial in
   $\mathbb{F}[x_1,\ldots,x_n]$.
  Suppose the degree $\deg(f)$ of $f$ is $\sum_{i=1}^n t_i$, where each $t_i$ is a nonnegative integer, and suppose the coefficient of $\prod_{i=1}^n x_i^{t_i}$ in $f$ is nonzero.
  Then, if $S_1,\ldots,S_n$ are subsets of $\mathbb{F}$ with $|S_i|>t_i$, there are $s_1\in S_1,s_2\in S_2,\ldots,s_n\in S_n$ so that $$f(s_1,\ldots,s_n)\neq 0.$$
\end{lemma}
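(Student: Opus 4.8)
The statement to prove is the Combinatorial Nullstellensatz itself, so the plan is to give the standard two-step algebraic argument: a polynomial reduction (division modulo a product of univariate polynomials) followed by a coefficient comparison. Nothing earlier in the excerpt feeds into this; it is a self-contained fact about polynomials over a field.

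First I would reduce to the clean case. Replacing each $S_i$ by an arbitrary subset of size exactly $t_i+1$ only strengthens the conclusion, so I may assume $|S_i| = t_i+1$. Arguing by contradiction, suppose $f(s_1,\ldots,s_n)=0$ for every $(s_1,\ldots,s_n)\in S_1\times\cdots\times S_n$. For each $i$ set $g_i(x_i)=\prod_{s\in S_i}(x_i-s)$, a monic univariate polynomial of degree $t_i+1$ whose leading term is $x_i^{t_i+1}$.

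The core step is a representation lemma: if $f$ vanishes on the whole grid $S_1\times\cdots\times S_n$, then there exist $h_1,\ldots,h_n\in\mathbb{F}[x_1,\ldots,x_n]$ with $\deg h_i\le \deg f-\deg g_i$ such that $f=\sum_{i=1}^n h_i g_i$. I would prove this by the standard reduction process: using the relation $x_i^{t_i+1}=g_i(x_i)+(\text{terms of }x_i\text{-degree}\le t_i)$, one repeatedly rewrites every monomial of $f$ having $x_i$-degree at least $t_i+1$, peeling off a multiple of $g_i$ each time without ever increasing the total degree, until one is left with $f=\sum_i h_i g_i+r$ where $r$ has $x_i$-degree at most $t_i$ in each variable and $\deg(h_i g_i)\le \deg f$. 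Since each $g_i$ vanishes on the grid, $r$ agrees with $f$ there, so $r$ also vanishes on $S_1\times\cdots\times S_n$. The remaining sub-lemma, proved by induction on $n$, is that a polynomial of $x_i$-degree at most $|S_i|-1=t_i$ in each variable which vanishes on $S_1\times\cdots\times S_n$ must be identically zero (fix the last $n-1$ coordinates at grid points and apply the univariate fact that a nonzero polynomial of degree at most $t_1$ cannot have $t_1+1$ roots). Hence $r=0$ and $f=\sum_i h_i g_i$.

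Finally I would compare the coefficient of the monomial $\prod_i x_i^{t_i}$ on both sides. This monomial has total degree $\sum_i t_i=\deg f$, so it is a top-degree monomial of $f$, and by hypothesis its coefficient is nonzero. On the right, since $\deg(h_i g_i)\le \deg f$, the only way $h_i g_i$ can contribute a monomial of degree $\deg f$ is as the product of the leading form of $h_i$ with the leading term $x_i^{t_i+1}$ of $g_i$; every such monomial is therefore divisible by $x_i^{t_i+1}$. But $\prod_j x_j^{t_j}$ has $x_i$-degree only $t_i$, so it is not divisible by $x_i^{t_i+1}$ and receives coefficient $0$ from each $h_i g_i$, hence coefficient $0$ in $\sum_i h_i g_i=f$. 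This contradicts the nonvanishing assumption, completing the proof. I expect the main obstacle to be the representation lemma: honestly tracking degrees through the reduction so that the total degree never grows (guaranteeing $\deg(h_i g_i)\le\deg f$) and cleanly establishing the multivariate grid-vanishing sub-lemma by induction. Once that bookkeeping is in place, the coefficient comparison is immediate.
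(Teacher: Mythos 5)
Your proof is correct, and it is the standard argument (first the representation lemma $f=\sum_i h_i g_i$ with $\deg(h_ig_i)\le\deg f$, then the coefficient comparison at $\prod_i x_i^{t_i}$) given in Alon's original paper. Note that the paper under review does not prove this lemma at all --- it is imported verbatim as Theorem~1.2 of the cited reference \cite{AlonCombi} and used as a black box --- so there is no in-paper proof to compare against; your reconstruction matches the proof in that reference.
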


As before, let $G$ be a graph on $n$ vertices and let $x\in\mathbb{F}_q^n$ be the coloring assigned to its  vertices.
A linear guessing strategy for vertex  $v_i$ is an affine function of the form  $f_i(x)=\sum_{j\in N_i}a_{i,j}x_j+b_i$, where
$N_i\s[n]$ is  the set of indices of the neighbors of $v_i$, and $a_{i,j},b_i\in\mathbb{F}_q$.
Namely, $v_i$ guesses its color to be $\sum_{j\in N_i}a_{i,j}x_j+b_i$.
The {\it adjacency matrix}  $A^G$ of $G$  is an $n\times n$ binary matrix such that for $1\le i\le n$, $A^G_{i,i}=0$; for $1\le i\neq j\le n$, $A^G_{i,j}=1$ if $v_i,v_j$ are connected, and $A^G_{i,j}=0$ otherwise.
Any linear guessing strategy has a natural matrix representation which is closely related to $A^G$ as follows.
For $1\le i\le n$, let $A^i=(A^i_1,\ldots,A^i_n)\in\mathbb{F}_q^n$ be the vector satisfying $A^i_i=1$, $A^i_j=-a_{i,j}$ for $j\in N_i$, and $A^i_j=0$ for $j\in[n]\setminus(\{i\}\cup N_i)$.
Then, $v_i$ guesses correctly if and only if $\langle A^i,x \rangle=b_i$, where $\langle\cdot\rangle$ is the standard inner product between vectors.
Let $b=(b_1,\ldots,b_n)^t$ and $A$ be the $n\times n$ matrix whose $i$th row is the vector $A^i$ for $1\le i\le n$.
The pair $(A,b)$ is called the {\it matrix representation} of the linear guessing strategy.
The above observation is summarized as the following lemma.

\begin{lemma}\label{matrixrepresentation}
  Any linear guessing strategy for $G$ on the finite field $\mathbb{F}_q$ can be represented by  a pair $(A,b)$, where  $A\in\mathbb{F}_q^{n\times n}$ and  $b=(b_1,\ldots,b_n)^t\in\mathbb{F}_q^n$, such that
  \begin{itemize}
  \item [$(i)$]  $A_{i,i}=1$ for $1\le i\le n$;
  \item [$(ii)$]  $A_{i,j}=0$ if $A^G_{i,j}=0$ and $1\le i\neq j\le n$;
  \item [$(iii)$]  each vertex $v_i$ guesses that its color satisfies the equation $\langle A^i,x \rangle=b_i$, where $A^i$ is the $i$th row of $A$.
\end{itemize}
\end{lemma}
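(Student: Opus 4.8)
The plan is to turn the informal matrix construction described immediately before the statement into a precise argument and then verify the three listed properties one by one; since this is essentially a bookkeeping lemma, the work is all in getting the sign conventions right. I would begin with an arbitrary linear guessing strategy, which by definition assigns to each vertex $v_i$ an affine function $f_i(x)=\sum_{j\in N_i}a_{i,j}x_j+b_i$ of the colors of its neighbors, where $N_i\s[n]$ is the set of indices of neighbors of $v_i$ and $a_{i,j},b_i\in\mathbb{F}_q$. Recall from the preliminaries that $v_i$ guesses correctly precisely when $x_i=f_i(x)$. Rewriting this equality as $x_i-\sum_{j\in N_i}a_{i,j}x_j=b_i$ isolates the constant term $b_i$ on the right and puts the left-hand side into a linear (inner-product) form, which is exactly what the matrix representation is meant to capture.

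Next I would make the construction explicit: for each $1\le i\le n$, define the row vector $A^i\in\mathbb{F}_q^n$ by $A^i_i=1$, $A^i_j=-a_{i,j}$ for $j\in N_i$, and $A^i_j=0$ for every remaining index $j\in[n]\setminus(\{i\}\cup N_i)$, let $A$ be the matrix whose $i$th row is $A^i$, and set $b=(b_1,\ldots,b_n)^t$. With these definitions the left-hand side of the rewritten correctness condition is exactly $\langle A^i,x\rangle$, so $v_i$ guesses correctly if and only if $\langle A^i,x\rangle=b_i$, giving property $(iii)$. Property $(i)$ is immediate, since $A_{i,i}=A^i_i=1$ by construction. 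For property $(ii)$, suppose $A^G_{i,j}=0$ with $i\neq j$; then $v_i$ and $v_j$ are non-adjacent, so $j\notin N_i$, and hence $A_{i,j}=A^i_j=0$, as required. Thus the pair $(A,b)$ is the desired matrix representation.

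I do not expect a genuine obstacle here: the statement simply repackages the observations preceding it, and the lemma is stated precisely so that later proofs in Section \ref{sec:linear} can work with the structured matrix $A$ rather than with the functions $f_i$ directly. The only point demanding care is the sign convention, namely keeping the diagonal entry equal to $+1$ while encoding the neighbor coefficients as $-a_{i,j}$, so that transferring $f_i$ to the left of $x_i=f_i(x)$ yields exactly $\langle A^i,x\rangle$ with $b_i$ alone on the right; any other normalization would still represent the same strategy but would not match conditions $(i)$ and $(ii)$ as stated.
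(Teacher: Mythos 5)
Your construction is exactly the one the paper uses: the lemma is stated there as a summary of the preceding observation, where $A^i$ is defined with $A^i_i=1$, $A^i_j=-a_{i,j}$ for $j\in N_i$, and $A^i_j=0$ otherwise, so that $v_i$ guesses correctly iff $\langle A^i,x\rangle=b_i$. Your verification of properties $(i)$--$(iii)$ is correct and matches the paper's (implicit) argument.
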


\subsection{Cycles}
\label{subsec:linear_cycles}

\noindent We  begin with a result showing that $C_4$ is linearly 3-solvable.
Although this result was already shown in \cite{Gadouleau09,Szczechla17}, we would like to present its proof as a toy example of a graph whose hat guessing number is attained by a linear guessing strategy.

\begin{example}\label{C_43-solvable}
Label the vertices of $C_4$ successively by $v_1,\ldots,v_4$, and consider the linear guessing strategy represented by the pair $(A,b)$, where
$$A=
 \begin{pmatrix}
 1 & 1 & 0 & 1 \\
 2 & 1 & 1 & 0 \\
 0 & 2 & 1 & 1  \\
 2 & 0 & 2 & 1
 \end{pmatrix}\in\mathbb{F}_3^{4\times 4} \text{, and } b=0\in\mathbb{F}_3^4.$$

\noindent We claim that for any $x\in\mathbb{F}_3^4$, there exist at least one $i\in[4]$ such that $\langle A^i,x\rangle=0$, i.e., $v_i$ guesses  correctly.
Indeed, notice that $A^3=A^1+A^2$ and $A^4=A^1-A^2$, and if $\langle A^1,x\rangle\neq0$ and $\langle A^2,x\rangle\neq0$, then either $\langle A^1,x\rangle+\langle A^2,x\rangle=0$ (and $v_3$ guesses correctly) or $\langle A^1,x\rangle=\langle A^2,x\rangle$ (and $v_4$ guesses correctly). 
\end{example}

We proceed to prove Theorem \ref{Linearforcycles} which claims that $C_3$ and $C_4$ are the only cycles which are linearly 3-solvable.
For the proof we need the following lemma.

\begin{lemma}\label{cyclesminusanedge}
    For $n\ge 3$ if we view $C_n$ as a directed graph with $2n$ directed edges, then deleting any directed edge from it will make the resulting graph, denoted by $C_n^-$ (which has $2n-1$ directed edges), not 3-solvable.
\end{lemma}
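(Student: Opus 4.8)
The plan is to exhibit, for every guessing strategy, a coloring of $C_n^-$ on which all players are wrong, which is exactly what "not $3$-solvable" means. Using the dihedral symmetry of $C_n$ (its automorphism group acts transitively on directed edges), I may assume the deleted edge is $v_n\to v_1$, so that the sight structure of $C_n^-$ is: $v_n$ sees only $v_{n-1}$; $v_1$ sees $v_2$ and $v_n$; and every other $v_i$ sees both cyclic neighbours $v_{i-1},v_{i+1}$. The crucial structural point is that the only vertices that see $v_n$ are $v_1$ and $v_{n-1}$. Hence if I freeze the colour $x_n=s$, the guessing functions of $v_1,\dots,v_{n-1}$ become a legitimate guessing strategy $g^{(s)}$ for the \emph{bidirectional path} $P:=v_1-v_2-\cdots-v_{n-1}$: the endpoints acquire functions $g_1^{(s)}(x_2)=f_1(x_2,s)$ and $g_{n-1}^{(s)}(x_{n-2})=f_{n-1}(x_{n-2},s)$, while the interior vertices are unaffected. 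Thus I have reduced the cyclic problem to understanding all-wrong colourings of a path, together with the single extra constraint that $v_n$ (whose guess is $f_n(x_{n-1})$) must also err.

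The heart of the argument, and the step I expect to be the main obstacle, is the following endpoint-control lemma for paths with $q=3$: for any strategy on a bidirectional path $u_1-\cdots-u_m$, the set $R\s[3]$ of colours that the endpoint $u_m$ can receive in some all-wrong colouring has $|R|\ge 2$. I would prove this by a transfer/propagation invariant read from the $u_1$ end. Define $A_j\s[3]^2$ to be the set of pairs $(a,b)$ such that, \emph{given} $x_{j+1}=b$, there is a choice of $x_1,\dots,x_{j-1}$ with $x_j=a$ making $u_1,\dots,u_j$ all wrong. The base case $A_1(a,b)=[a\ne g_1(b)]$ has every column $\{a:A_1(a,b)\}$ of size exactly $2$, and the transfer rule is $A_{j+1}(a',b')=\bigvee_a\big(A_j(a,a')\wedge a'\ne f_{j+1}(a,b')\big)$. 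The invariant I propagate is ``every column of $A_j$ has at least two elements.'' Indeed, fix $b'$ and suppose two values $a_1'\ne a_2'$ both fail to lie in the $b'$-column of $A_{j+1}$; then $f_{j+1}(\cdot,b')$ sends the whole set $\{a:A_j(a,a_i')\}$ to $a_i'$ for $i=1,2$. By the inductive invariant both of these sets have size $\ge2$, and two $2$-subsets of the $3$-element set $[3]$ must intersect; a common element would force $a_1'=a_2'$, a contradiction. So at most one column value is missing and the invariant survives. The same ``two $2$-subsets of $[3]$ meet'' pigeonhole, applied once more at the final vertex $u_m$ with its one-sided guess, shows at most one endpoint colour is unattainable, i.e.\ $|R|\ge2$. (This is precisely where $q=3$ is used: $2+2>3$.)

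Finally I combine the two ingredients. For each $s\in[3]$ let $R_s\s[3]$ be the set of attainable $v_{n-1}$-colours for all-wrong colourings of $P$ under $g^{(s)}$; by the path lemma $|R_s|\ge2$. To build an all-wrong colouring of $C_n^-$ it suffices to find $s$ and $c\in R_s$ with $f_n(c)\ne s$: take an all-wrong path colouring realizing $x_{n-1}=c$, set $x_n=s$, and then $v_1,\dots,v_{n-1}$ err (path) while $v_n$ errs since $x_n=s\ne f_n(c)$. Suppose no such pair exists; then $R_s\s f_n^{-1}(s)$ for every $s$. The fibres $f_n^{-1}(0),f_n^{-1}(1),f_n^{-1}(2)$ are pairwise disjoint, so the sets $R_0,R_1,R_2$ are pairwise disjoint subsets of $[3]$ of total size $\ge 6>3$, which is impossible. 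Hence the desired $s,c$ exist, producing a colouring on which everyone guesses wrong, and $C_n^-$ is not $3$-solvable. (I note in passing that the same counting even yields non-$q$-solvability for all $q\ge3$, consistent with $C_n$ itself not being $4$-solvable, though only the $q=3$ case is needed here.) The delicate part throughout is the path invariant; once it is in place the cyclic conclusion is a short pigeonhole.
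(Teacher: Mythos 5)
Your proof is correct. The paper's own argument rests on the same ``two $2$-element subsets of $[3]$ must intersect'' pigeonhole, but it is organized differently. The paper orients the deletion the other way (it removes $v_1\to v_n$, so $v_1$ is the vertex with reduced sight and $f_1$ depends on $x_2$ alone) and then assigns $x_1,x_2,\ldots,x_n$ in a single greedy forward pass: each $x_i$ is chosen in the intersection of the $(\ge 2)$-element set of colors keeping $v_{i-1}$ wrong with the $(\ge 2)$-element set of colors $x_i$ for which at most one continuation $x_{i+1}$ would make $v_i$ correct. With that orientation, by the time $x_n$ is picked the guess $f_n(x_{n-1},x_1)$ is a single already-determined value, so the cycle closes with one more intersection and no case analysis. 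You instead freeze $x_n=s$, reduce to a bidirectional path, prove a standalone reachability lemma (every column of $A_j$ has at least two elements, hence $|R_s|\ge 2$), and close the cycle by a disjointness count over the three fibres of $f_n$. Your route costs an extra layer --- the pigeonhole over $s$ --- that the paper's orientation trick avoids, but it isolates a reusable path lemma and separates the path analysis from the cycle-closing step more cleanly; the underlying $2+2>3$ mechanism is the same in both. (Your parenthetical about all $q\ge 3$ is true but adds nothing, since non-$3$-solvability already implies non-$q$-solvability for every $q\ge 3$, and the paper's greedy pass generalizes in the same way.)
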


\begin{proof}[\textbf{Proof of Theorem \ref{Linearforcycles}}]
 Since it was shown in \cite{Szczechla17} that any cycle is not 4-solvable, to prove Theorem \ref{Linearforcycles}, it suffices to show that for any $n\ge 5$, $C_n$ is not linearly 3-solvable.
  Assume to the contrary that for some $n\ge 5$, $C_n$ is linearly 3-solvable.
  Label the vertices of $C_n$ successively by $v_1,\ldots,v_n$.
  By Lemma \ref{matrixrepresentation} the linear guessing strategy can be represented by a pair $(A,b)$, where $A$ has the following structure

    \begin{table}[h]
    \begin{center}A=
    \begin{tabular}{|c|c|c|c|c|c|c|c|}
      \hline
       & $v_1$ & $v_2$ & $v_3$ & $\cdots$ & $\cdots$ & $v_{n-1}$ & $v_n$ \\\hline
      $v_1$ & 1 & $*$ &  &  &  &  & $*$ \\\hline
      $v_2$ & $*$ & 1 & $*$ &  &  &  &  \\\hline
      $v_3$ &  & $*$ & 1 & $*$ &  &  &  \\\hline
      $\vdots$ &  &  & $\ddots$ & $\ddots$  & $\ddots$ &  &  \\\hline
      $\vdots$ &  &  &  & $\ddots$ & $\ddots$ & $\ddots$ &  \\\hline
      $v_{n-1}$ &  &  &  &  & $*$ & 1 & $*$ \\\hline
      $v_n$ & $*$ &  &  &  &  & $*$ & 1 \\\hline
    \end{tabular}
        \end{center}
    \end{table}
    and the following claims hold:
    \begin{enumerate}
      \item [(1)] the blank cells are filled with zeros;
      \item [(2)] the  symbols $*$ are not  equal to zero;
      \item [(3)] ${\rm{rank}}(A)=n-2$ and every $n-2$ rows of $A$ are linearly independent.
    \end{enumerate}
Indeed, (1) follows from Lemma \ref{matrixrepresentation} $(ii)$ and the adjacency matrix of $C_n$.
(2) holds since if we view $C_n$ as a directed graph with $2n$ directed edges, it is not hard to see that having a $*$  equal to zero is equivalent to deleting  the corresponding directed edge, which  by Lemma  \ref{cyclesminusanedge}  implies that the graph is not even 3-solvable.
It remains to verify (3). To show that the first $n-2$ rows of $A$ are linearly independent, it suffices to consider the $(n-2)\times(n-2)$ submatrix of $A$ formed by rows 1 to $n-2$ and columns 2 to $n-1$. It is a lower triangular matrix with nonzero entries on its diagonal, hence it is invertible. By similar reasoning, one can show that every $n-2$ rows of $A$ are linearly independent.
			
Next by contradiction we show that ${\rm{rank}}(A)\le n-2$. It is obvious that ${\rm{rank}}(A)\neq n$, since otherwise $A$ is invertible and there exists an  $x\in \mathbb{F}_3^n$ with ${\rm{d}}(Ax,b)=n$, i.e., no vertex guesses correctly.
If ${\rm{rank}}(A)=n-1$, then assume  without loss of generality that the first $n-1$ rows of $A$ are linearly independent, and write $A^n=\sum_{i=1}^{n-1}\lambda_i A^i,$ with $\lambda_i\in\mathbb{F}_3$ not all equal to zero since $A^n\neq 0$.
Set $S_i=\mathbb{F}_3\setminus\{b_i\}$ for $i=1,\ldots,n$, and let $j\in [n-1]$ with $\lambda_j\neq 0$.
For $i=1,\ldots,n-1, i\neq j$ let $\gamma_i\in S_i$ be an arbitrary element, and let $x,y \in \mathbb{F}_3^n$ be  two vectors  that satisfy  for $i=1,\ldots,n-1$,
$$\langle A^i,x \rangle=\begin{cases}
\gamma_i & i\neq j \\
w_1 & i=j \\
\end{cases}  \text{ and }\langle A^i,y \rangle=\begin{cases}
\gamma_i & i\neq j \\
w_2 & i=j \\
\end{cases},$$
where $S_j=\{w_1,w_2\}$. It is easy to verify that either   ${\rm{d}}(Ax,b)=n$ or ${\rm{d}}(Ay,b)=n$ (possibly both), i.e., there is a hat assignment for which  no vertex guesses correctly.
Hence the rank of $A$ is $n-2$ and any set of $n-2$ rows are linearly independent. In particular, the rows $A^{n-1},A^n$ can be written as
$$A^{n-1}=\sum_{i=1}^{n-2}\lambda_iA^i,\qquad A^n=\sum_{i=1}^{n-2}\mu_i A^i,$$
with $\lambda_i,\mu_i\neq 0$ for any $i$,  since every set of $n-2$ rows of $A$ are linearly independent.

Let $A'$ be the submatrix of $A$ formed by its first $n-2$ rows.
For $1\le i\le n-2$, define the linear form $z_i = \langle A^i, x\rangle $, and by linearity
$$\langle A^{n-1},x\rangle=\sum_{i=1}^{n-2} \lambda_iz_i,\qquad \langle A^{n},x\rangle=\sum_{i=1}^{n-2}\mu_i z_i.$$
Observe that since $A'$ is of full rank then the mapping $x\mapsto A'x, x\in \mathbb{F}_3^n$ is onto $\mathbb{F}_3^{n-2}$ and in particular, every $(z_1,\ldots,z_{n-2})\in S_1\times\cdots\times S_{n-2}$ is
contained in its image.		
By Lemma \ref{matrixrepresentation} $(iii)$ and \eqref{guessingstrategy} the polynomial
$$\prod_{i=1}^n(\langle A^i,x\rangle-b_i)=
\prod_{i=1}^{n-2}(z_i-b_i)(\sum_{i=1}^{n-2}\lambda_iz_i-b_{n-1})(\sum_{i=1}^{n-2}\mu_iz_i-b_n)$$
vanishes on $\mathbb{F}_3^n$.
In particular,
   $$h(z_1,\ldots,z_{n-2}):=(\sum_{i=1}^{n-2}\lambda_iz_i-b_{n-1})(\sum_{i=1}^{n-2}\mu_iz_i-b_n)$$
vanishes on $S_1\times\cdots\times S_{n-2}$.
Since $\deg(h)=2$ and $|S_i|\ge 2$, by the Combinatorial Nullstellensatz it is not hard to see that the coefficients of the cross terms $z_iz_j,1\le i\neq j\le n-2$ must be all zero. In particular, since  $n-2\geq 3$  then the  coefficients of $z_1z_2,z_2z_3$, and $z_1z_3$ satisfy
\begin{equation*}
    \left\{
        \begin{aligned}
           \lambda_1\mu_2+\lambda_2\mu_1=0,\\
        \lambda_2\mu_3+\lambda_3\mu_2=0,\\
            \lambda_1\mu_3+\lambda_3\mu_1=0.\\
        \end{aligned}
    \right.
\end{equation*}
Recall that  none of the $\lambda_i$'s and $\mu_i$'s is equal to zero, hence
\begin{equation*}
    \left\{
        \begin{aligned}
  \fr{\lambda_1}{\lambda_2}=-\fr{\mu_1}{\mu_2},\\
   \fr{\lambda_2}{\lambda_3}=-\fr{\mu_2}{\mu_3},\\
    \fr{\lambda_3}{\lambda_1}=-\fr{\mu_3}{\mu_1}.\\
        \end{aligned}
    \right.
\end{equation*}
Multiplying all the left hand sides and all the right hand sides of the above equations we get $1=-1$, a contradiction.
\end{proof}


To conclude the proof of the theorem, it remains to prove Lemma \ref{cyclesminusanedge}.

\begin{proof}[\textbf{Proof of Lemma \ref{cyclesminusanedge}}]
Label the $n$ vertices of $C_n^-$ successively by $v_1,\ldots,v_n$.
As before, for $1\le i\le n$, let $x_i$ and $f_i$ be the hat color and the guessing strategy of $v_i$, respectively.
Without loss of generality, assume that $v_1\rightarrow v_n$ is the deleted directed edge, hence $v_1$ cannot see the hat color assigned to $v_n$, however $v_n$ can see the hat color assigned to $v_1$.
Therefore, one can write $f_1=f_1(x_2)$, $f_i=f_i(x_{i-1},x_{i+1})$ for $2\le i\le n-1$ and $f_n=f_n(x_{n-1},x_1)$.
Our goal is to find an assignment of hat colors $x=(x_1,\ldots,x_n)\in [3]^n$ for which all the vertices guess incorrectly, i.e., $f_i\neq x_i$ for each $1\le i\le n$.
We will pick the colors $x_i,i=1,\ldots,n$ successively starting from $x_1$.
Set $x_1$ to be a color in $[3]$ such that $$|\{c\in [3]:f_1(c)= x_1\}|\le 1.$$
It is easy to verify that there are in fact at least two possible choices for $x_1$.
Assuming that colors $x_1,\ldots,x_{i-1}$ were already assigned, assign the color $x_i$ as follows.
By the $(i-1)$th hat assignment,
$$|\{c\in [3]:f_{i-1}(x_{i-2},c)= x_{i-1}\}|\le 1 \text{ hence }|\{c\in [3]:f_{i-1}(x_{i-2},c)\neq x_{i-1}\}|\geq 2.$$
Let $A\subseteq [3]$ be the set of colors $x_i$ for which
 $$|\{c\in [3]:f_i(x_{i-1},c)= x_i\}|\le 1.$$
Given $x_{i-1}$ one can check that $|A|\geq 2$. Set $x_i$ to be any color in the intersection $$A\cap \{c\in [3]:f_{i-1}(x_{i-2},c)\neq x_{i-1}\},$$ which is nonempty since  these  are two subsets of $[3]$, each of size  at least $2$.
Lastly, we set the color $x_n$ as follows. By the $(n-1)$th hat color assignment,
$$|\{c\in [3]:f_{n-1}(x_{n-2},c)= x_{n-1}\}|\leq 1 \text{ hence } |\{c\in [3]:f_{n-1}(x_{n-2},c)\neq  x_{n-1}\}|\geq  2.$$
Clearly we have
$$|\{c\in [3]:f_{n}(x_{n-1},x_1)\neq c\}|= 2.$$
Set $x_n$ to be any color in the nontrivial intersection
$$\{c\in [3]:f_{n-1}(x_{n-2},c)\neq  x_{n-1}\}\cap \{c\in [3]:f_{n}(x_{n-1},x_1)\neq c\}.$$
It is easy to verify that by the above color assignments all the vertices make incorrect guesses.
\end{proof}

\subsection{Complete bipartite graphs}
\label{subsec:linear_bipartite}

\noindent In this subsection we prove Theorem \ref{linearK_n,n}. We need the following lemma given in \cite{Alon89}.

\begin{lemma}[see Proposition 1, \cite{Alon89}]\label{alon89}
  Let $A$ be a nonsingular $n\times n$ matrix over $\mathbb{F}_q$, where $q\ge 4$ is a proper prime power with characteristic $p$.
  Let $S_1,\ldots,S_n\s\mathbb{F}_q$ be arbitrary subsets, each of cardinality $p$, and let $s=(s_1,\ldots,s_n)$ be an arbitrary vector of $\mathbb{F}_q^n$.
  Then there exists a vector $x=(x_1,\ldots,x_n)\in S_1\times\cdots\times S_n$ such that
  ${\rm{d}}(Ax,s)=n$.
\end{lemma}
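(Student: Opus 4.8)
The plan is to encode the requirement ${\rm d}(Ax,s)=n$ as the non-vanishing of a single polynomial and then apply the Combinatorial Nullstellensatz, reducing everything to one coefficient computation. Write $L_i(x)=\langle A^i,x\rangle=\sum_{j}A_{i,j}x_j$ for the $i$th linear form, so that the $i$th coordinate of $Ax$ equals $s_i$ exactly when $L_i(x)=s_i$. Since a nonzero element of $\mathbb{F}_q$ stays nonzero under the $(p-1)$st power, the polynomial
$$F(x_1,\dots,x_n)=\prod_{i=1}^n\bigl(L_i(x)-s_i\bigr)^{p-1}$$
satisfies $F(x)\neq0$ if and only if $L_i(x)\neq s_i$ for every $i$, i.e.\ precisely when ${\rm d}(Ax,s)=n$. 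Thus it suffices to find a point of $S_1\times\cdots\times S_n$ at which $F$ does not vanish. Here $\deg F=n(p-1)$, and the monomial $\prod_j x_j^{p-1}$ has total degree $n(p-1)$, so its coefficient in $F$ equals its coefficient in the leading form $\prod_{i=1}^n L_i(x)^{p-1}$; call this coefficient $c_n$. Because each $|S_j|=p>p-1$, the Combinatorial Nullstellensatz (applied with $t_j=p-1$) produces a point $x\in S_1\times\cdots\times S_n$ with $F(x)\neq0$ as soon as $c_n\neq0$.

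Everything therefore comes down to the characteristic-$p$ identity $c_n=\det(A)^{p-1}$: granting it, nonsingularity gives $\det(A)^{p-1}\neq0$ and the lemma follows. I would prove this identity by interpolation. Both $c_n$ and $\det(A)^{p-1}$ are polynomials in the $n^2$ entries $A_{i,j}$ over $\mathbb{F}_p$ in which every entry occurs to degree at most $p-1$ (for $\det(A)^{p-1}$ this is because $\det$ is multilinear; for $c_n$ it is visible from the multinomial expansion of $\prod_i L_i^{p-1}$). A polynomial over $\mathbb{F}_p$ with all individual degrees below $p$ is determined by its values on $\mathbb{F}_p^{\,n^2}$, so it is enough to verify that $c_n$ and $\det(A)^{p-1}$ agree as functions on matrices with entries in the prime field.

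To evaluate $c_n$ at such an $A$, I would use the standard fact that for a homogeneous $P$ of degree $n(p-1)$ the only monomial whose exponents are all positive multiples of $p-1$ and sum to $n(p-1)$ is $\prod_j x_j^{p-1}$; hence $\sum_{x\in\mathbb{F}_p^n}P(x)$ equals $(-1)^n$ times the coefficient of $\prod_j x_j^{p-1}$ in $P$. Applying this with $P=\prod_i L_i^{p-1}$, and using that $a^{p-1}=1$ for every nonzero $a\in\mathbb{F}_p$ while $0^{p-1}=0$, gives
$$c_n=(-1)^n\sum_{x\in\mathbb{F}_p^n}\prod_{i=1}^n L_i(x)^{p-1}=(-1)^n\bigl|\{x\in\mathbb{F}_p^n:\ (Ax)_i\neq0\ \text{for all }i\}\bigr|.$$
If $A$ is nonsingular this count is $(p-1)^n\equiv(-1)^n$, so $c_n=1=\det(A)^{p-1}$; if $A$ is singular then the nonempty fibres of $x\mapsto Ax$ are cosets of a kernel of size divisible by $p$, so the count vanishes modulo $p$ and $c_n=0=\det(A)^{p-1}$. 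This matches $\det(A)^{p-1}$ in both cases and establishes the identity.

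The step I expect to require the most care is the identity $c_n=\det(A)^{p-1}$ itself: a priori $c_n$ is a complicated unsigned sum of multinomial coefficients over nonnegative integer matrices with all row and column sums equal to $p-1$, and it is not obvious that it collapses to a power of the determinant. The interpolation-plus-counting argument above is exactly what makes this transparent, sidestepping any direct manipulation of those multinomials. As an instructive sanity check, the characteristic-two case is immediate: there $p-1=1$, the coefficient $c_n$ is the permanent of $A$, and the permanent coincides with $\det(A)$ in characteristic two. Finally, the hypothesis that $q$ is a proper prime power enters only in that the sets $S_i$ of size $p$ are then proper subsets of $\mathbb{F}_q$, which is what makes the conclusion nontrivial; the coefficient identity powering the argument is purely a characteristic-$p$ phenomenon.
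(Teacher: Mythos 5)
Your proposal is correct and complete. One important point of context: this paper contains no proof of Lemma \ref{alon89} at all; the statement is imported verbatim from Alon and Tarsi \cite{Alon89} (Proposition 1 there) and used as a black box in the proof of Theorem \ref{linearK_n,n}, so the only meaningful comparison is with that source rather than with anything in this paper. Your argument is essentially the standard proof of the Alon--Tarsi result, phrased in its Combinatorial Nullstellensatz form: non-vanishing of $\prod_{i=1}^n\bigl(L_i(x)-s_i\bigr)^{p-1}$ somewhere on $S_1\times\cdots\times S_n$ follows from the Nullstellensatz once the coefficient $c_n$ of $\prod_{j=1}^n x_j^{p-1}$ in the leading form $\prod_{i=1}^n L_i(x)^{p-1}$ is identified as $\det(A)^{p-1}\neq 0$, and this determinant identity is exactly what powers \cite{Alon89} (see also the treatment in \cite{AlonCombi}). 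Your interpolation-plus-counting derivation of the identity is sound: both sides are polynomials with coefficients in the prime field in the $n^2$ entries, of individual degree at most $p-1$; such polynomials are determined by their values on $\mathbb{F}_p^{n^2}$, so equality as functions on prime-field matrices gives equality as formal polynomials, which then transfers to evaluation at matrices over $\mathbb{F}_q$; and the evaluation via $c_n=(-1)^n\sum_{x\in\mathbb{F}_p^n}\prod_i L_i(x)^{p-1}$ together with the count of vectors with nowhere-zero image correctly handles both cases (the count is $(p-1)^n\equiv(-1)^n$ when $A$ is nonsingular, and divisible by $p$ when $A$ is singular). Two small points you leave implicit but which hold: the Nullstellensatz hypothesis $\deg F=\sum_j t_j$ is met because $c_n\neq 0$ forces $\deg F=n(p-1)$ exactly; and it is the polynomial (not merely functional) identity that you need in order to evaluate at entries lying in $\mathbb{F}_q\setminus\mathbb{F}_p$, which is precisely what your degree bound delivers. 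Your closing remark is also accurate: properness of $q$ is not needed by the argument itself, only to make the statement non-vacuous, and it is in the application in Theorem \ref{linearK_n,n}, where the sets $S_j$ must avoid the prescribed values $b_j$ and $c_j$, that properness becomes essential.
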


To prove Theorem \ref{linearK_n,n}, we use the following simple consequence of Lemma \ref{alon89}.

\begin{lemma}\label{modify}
  For positive integers $m\le n$, let $A'$ be an $m\times n$ matrix over $\mathbb{F}_q$ with full row rank, where $q\ge 4$ is a proper prime power with characteristic $p$. Let $S_1,\ldots,S_n\s\mathbb{F}_q$ be arbitrary subsets, each of cardinality $p$, and let $s=(s_1,\ldots,s_m)$ be an arbitrary vector of $\mathbb{F}_q^m$.
  Then there exists a vector $x=(x_1,\ldots,x_n)\in S_1\times\cdots\times S_n$ such that
  ${\rm{d}}(A'x,s)=m$.
\end{lemma}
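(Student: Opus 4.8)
The plan is to reduce the rectangular case to the square case already settled in Lemma \ref{alon89}. Since $A'$ is an $m\times n$ matrix of full row rank $m$, its columns span $\mathbb{F}_q^m$, so I may choose a set of indices $J\subseteq[n]$ with $|J|=m$ for which the $m\times m$ submatrix $A'_J$ obtained by keeping only the columns indexed by $J$ is nonsingular. Write $\bar J=[n]\setminus J$ for the remaining $n-m$ column indices.

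First I would fix the free coordinates: for every $j\in\bar J$ pick an arbitrary value $x_j\in S_j$, and collect these into a vector $x_{\bar J}$. This leaves only the coordinates indexed by $J$ to be determined. Decomposing the matrix–vector product according to the two blocks of columns gives
$$A'x = A'_J x_J + A'_{\bar J} x_{\bar J},$$
where $x_J=(x_j)_{j\in J}$ is the unknown and $A'_{\bar J}$ is the submatrix of the discarded columns. Setting $s'=s-A'_{\bar J}x_{\bar J}\in\mathbb{F}_q^m$ absorbs the contribution of the fixed coordinates, so that the requirement ${\rm{d}}(A'x,s)=m$ becomes exactly ${\rm{d}}(A'_J x_J, s')=m$, since each coordinate of $A'x-s$ equals the corresponding coordinate of $A'_J x_J - s'$.

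At this point the problem is square: $A'_J$ is a nonsingular $m\times m$ matrix over $\mathbb{F}_q$, the sets $(S_j)_{j\in J}$ are $m$ subsets of $\mathbb{F}_q$ each of cardinality $p$, and $s'$ is a fixed target vector in $\mathbb{F}_q^m$. Applying Lemma \ref{alon89} (with $n$ there replaced by $m$, the matrix $A'_J$, the subsets $(S_j)_{j\in J}$, and the vector $s'$) yields a choice $x_J\in\prod_{j\in J}S_j$ with ${\rm{d}}(A'_J x_J,s')=m$. Combining $x_J$ with the previously fixed $x_{\bar J}$ produces the desired $x\in S_1\times\cdots\times S_n$. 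I do not expect a genuine obstacle here: the only points requiring care are the bookkeeping of column indices when splitting $A'$ into $A'_J$ and $A'_{\bar J}$, and the observation that shifting the target from $s$ to $s'$ preserves the Hamming-distance condition coordinate by coordinate, so that all $m$ entries of $A'x-s$ are nonzero simultaneously.
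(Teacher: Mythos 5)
Your proof is correct, but it reduces to Lemma \ref{alon89} by a different decomposition than the paper. The paper's proof goes in the ``row'' direction: it completes the full-row-rank $m\times n$ matrix $A'$ to a nonsingular $n\times n$ matrix $A$ by appending $n-m$ further rows, extends the target vector $s$ by $n-m$ arbitrary entries, and applies Lemma \ref{alon89} at dimension $n$; the first $m$ coordinates of $Ax-s$ being nonzero is exactly ${\rm{d}}(A'x,s)=m$. You instead go in the ``column'' direction: select $m$ columns forming a nonsingular $m\times m$ submatrix $A'_J$, freeze the remaining $n-m$ variables at arbitrary values of their sets $S_j$, absorb their contribution into a shifted target $s'=s-A'_{\bar J}x_{\bar J}$, and apply Lemma \ref{alon89} at dimension $m$. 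Both reductions are one or two lines and equally rigorous; yours invokes the square lemma at the smaller dimension $m$ and makes explicit that the extra degrees of freedom play no role beyond being admissible ($x_j\in S_j$), while the paper's is marginally shorter to state but relies on the (easy) completion of a full-row-rank matrix to a nonsingular square one. There is no gap in your argument.
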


\begin{proof}
  Complete $A'$ to a nonsingular $n\times n$ matrix $A$ and choose  $s_{n-m+1},\ldots,s_n\in \mathbb{F}_q$ arbitrarily.
  Then the result follows easily from Lemma \ref{alon89}.
\end{proof}

\begin{proof}[\textbf{Proof of Theorem \ref{linearK_n,n}}] 
Assume that $q$ is a power of the prime number $p$. 
Let $V_L$ and $V_R$ be the  left and right vertex parts of the graph, respectively.
Let $\chi(V_L)=x=(x_1,x_2,\ldots,x_n)$ and $\chi(V_R)=y=(y_1,y_2,\ldots,y_n)$, where $x,y\in\mathbb{F}_q^n$.
By Lemma \ref{matrixrepresentation}, the linear guessing strategy of $K_{n,n}$ can be represented by the following $2\times 2$ block matrix

$$
 A=\begin{pmatrix}
 I_n & B   \\
 C & I_n
 \end{pmatrix} \text{, and the vector } (b,c),$$

\noindent where $I_n$ is the $n\times n$ identity matrix, $B,C$ are $n\times n$ matrices, and  $b,c\in\mathbb{F}_q^n$.

$K_{n,n}$ is not linearly $q$-solvable if and only if there exist color assignments $x, y\in\mathbb{F}_q^n$ such that
$${\rm{d}}\big(A\cdot \begin{pmatrix}
 x  \\
 y
 \end{pmatrix},(b,c)^t\big)=n,$$
or equivalently for $i=1,\ldots,n$

\begin{equation}\label{3}
  \begin{aligned}
    x_i+\langle B^i,y\rangle \neq b_i, \qquad  y_i+\langle C^i,x\rangle \neq c_i,
  \end{aligned}
\end{equation}
where $B^i$ and $C^i$ are the $i$th rows of $B$ and $C$, respectively.
Define $2n$ linear forms $z_i$ in the variables $x_i,y_i$ as follows
\begin{equation}
\label{2nforms}
z_i=\begin{cases}
x_i+\langle B^i,y\rangle  & i=1,\ldots,n\\
y_{i-n}+\langle C^{i-n},x\rangle & i=n+1,\ldots,2n,
\end{cases}
\end{equation}
and note that $\{z_1,\ldots,z_n\}$ and $\{z_{n+1},\ldots,z_{2n}\}$ are two sets of linearly independent forms.
Complete the first $n$ forms to a basis $z_1,z_2,\ldots,z_n,z_{n+1},\ldots,z_{n+k}$ of these $2n$ forms, where we assume without loss of generality that the first $n+k$ forms among $z_1,\ldots,z_{2n}$ form such a basis.
Notice that possibly $k=0$.
The remaining $n-k$ forms $z_i$ for $i\in \{n+k+1,\ldots,2n\}$ are linearly independent, and each of them is a linear combination of the first $n+k$ forms.
Let $A'$ be the $(n-k)\times(n+k)$ coefficient matrix of these $n-k$ forms, i.e., $A'\cdot(z_1,\ldots,z_{n+k})^t=(z_{n+k+1},\ldots,z_{2n})$.
Applying Lemma \ref{modify} with $A'$, $s_i=c_{k+i}$ for $i=1,\ldots,n-k$, and $S_j$ being an arbitrary $p$-subset of $\mathbb{F}_q\setminus\{b_j\}$ for $j=1,\ldots,n$ and of $\mathbb{F}_q\setminus\{c_{j-n}\}$ for $j=n+1,\ldots,n+k$ (such $S_j$'s do exist since $q$ is a proper prime power of $p$), it follows that there exist $z_1,\ldots,z_{n+k}\in\mathbb{F}_q$ such that $z_j\neq b_j$ for $1\le j\le n$, $z_j\neq c_{j-n}$ for $n+1\le j\le n+k$ and
\begin{equation}\label{5}
  \begin{aligned}
    {\rm{d}}\big(A'\cdot(z_1,\ldots,z_{n+k})^t,(c_{k+1},\ldots,c_n)^t\big)=n-k.
  \end{aligned}
\end{equation}
The result follows directly from (\ref{3}), (\ref{2nforms}) and (\ref{5}).
\end{proof}

\subsection{Degenerate graphs}
\label{subsec:degenerate}

\begin{proof}[\textbf{Proof of Theorem \ref{degeneratenonlinear}}]
  Let $G$ be a graph on $n$ vertices and let $v_1,\ldots,v_n$ be an ordering of its vertices such that for each $2\le i\le n$, $v_i$ is connected to at most $d$ vertices among $v_1,\ldots,v_{i-1}$.
  We prove the theorem by contradiction.
  It suffices to show that $G$ is not linearly $q$-solvable for any prime power $q\ge d+2$.
  For $1\le i\le n$, let $x_i\in \mathbb{F}_q$ and the affine function $f_i$ be the hat color and the guessing strategy of $v_i$, respectively.
  By (\ref{guessingstrategy}),
  $$F(x):=\prod_{i=1}^n(x_i-f_i)$$ vanishes on $\mathbb{F}_q^n$. 
  For $1\le i,j\le n$, let $p_i=x_i-f_i$ and write $x_j\in p_i$ if the monomial $x_j$ appears in  $p_i$ with a  nonzero coefficient.
  Next, we define successively $n$  subsets $P_n,\ldots,P_1\s\{p_1,\ldots,p_n\}$  as follows.
  Let $P_n=\{p_j:x_n\in p_j\}$, and for  $i=n-1,\ldots,1$,  $$P_i=\{p_j\not\in\cup_{k=i+1}^n P_k:x_i\in p_j\}.$$
  Notice that  some of the $P_i$'s might be the empty set, and  $\cup_{i=1}^n P_i=\{p_1,\ldots,p_n\}$.
  Furthermore, by the $d$-degeneracy of $G$, $|P_i|\le d+1$ for any $i$.
  Rewrite $F$ as
  $$F(x)=\prod_{i=1}^n\prod_{p\in P_i} p(x),$$
  and  consider its monomial $m(x):=x_n^{|P_n|}x_{n-1}^{|P_{n-1}|}\cdots x_1^{|P_1|}$.
  Obviously, $\deg m(x)=\sum_{i=1}^n |P_i|=n=\deg F$.
  Moreover, the coefficient of $m(x)$ must be nonzero, since the coefficient of $x_i^{|P_i|}$ in $\prod_{p\in P_i} p(x)$ is nonzero, and $m(x)$ is formed only by successively multiplying the monomials $x_i^{|P_i|}$ in $\prod_{p\in P_i} p(x)$ for $i=n,\ldots,1$.
  Therefore, by the Combinatorial Nullstellensatz  $F(x)$ does not vanish on $\mathbb{F}_q^n$ since $q\geq d+2$.
\end{proof}

\subsection{Linear solvability and min-rank}
\label{subsec:linear_min_rank}

\begin{proof}[\textbf{Proof of Theorem \ref{minrk1}}]
Similar to the proof of Theorem \ref{degeneratenonlinear}, it suffices to show that $G$ is not linearly $q$-solvable for any prime power $q\ge n-{\rm{mr}}(G)+2$.
Assume the opposite.
By Lemma \ref{matrixrepresentation} let the matrix $A \in \mathbb{F}_q^{n\times n}$ and the vector $b\in \mathbb{F}_q^n$ represent the linear guessing strategy of $G$, where vertex $v_i$ guesses $\langle A^i,x\rangle=b _i$, and $A^i$ is the $i$th row of $A$.
It is clear  from  Lemma \ref{matrixrepresentation}  $(i),(ii)$ that $A$ fits $G$.
Suppose ${\rm{rank}}(A)=r$, then by definition ${\rm{mr}}(A)\le r$.

Without loss of generality, assume that the first $r$ rows of $A$ are linearly independent, then  for $j\in\{r+1, \ldots, n\}$ write $A^j=\sum_{i=1}^r\lambda_{ij}A^i$, and by linearity
$\langle A^j,x \rangle = \sum_{i=1}^r\lambda_{ij} \langle A^i,x \rangle$.
Let $A'$ be the submatrix of $A$ formed by its first $r$ rows.
Since $A'$ is of full row rank,  $A'x$ is onto $\mathbb{F}_q^r$ as $x$ ranges over $\mathbb{F}_q^n$.

For $i=1,\ldots,n$, let $z_i=\langle A^i,x \rangle$ be a nonzero linear form, since $A^i\neq 0$.
By Lemma \ref{matrixrepresentation} $(iii)$ and \eqref{guessingstrategy}
$$\prod_{i=1}^n(\langle A^i,x \rangle-b_i)=\prod_{i=1}^r(z_i-b_i)\prod_{j=r+1}^n(\sum_{i=1}^r\lambda_{ij}z_i-b_j)$$
vanishes on $\mathbb{F}_q^n$. 	
Denote $$f(z_1,\ldots,z_r):=\prod_{j=r+1}^n(\sum_{i=1}^r\lambda_{ij}z_i-b_j),$$
and let $S_i=\mathbb{F}_q\setminus\{b_i\}$ for $1\le i\le r$, then $f$ vanishes on every
$(z_1,\ldots,z_r)\in\ S_1\times\cdots\times S_r$.
For $r+1\le j\le n$, since $A^j\neq 0$ then $\lambda_{1j},\ldots,\lambda_{rj}$ are not all zero.
Set $j^*$ to be the smallest index $i$ such that  $\lambda_{ij}\neq 0$, and notice that the coefficient of the monomial
$\prod_{j=r+1}^n z_{j^*}$ in $f(z_1,\ldots,z_r)$ is nonzero.
Finally, since $\deg f=n-r$, $|S_1|=\cdots=|S_r|=q-1$, we have that $n-r\le n-{\rm{mr}}(A)<q-1=|S_i|$, and by the Combinatorial Nullstellensatz $f$ cannot vanish on $S_1\times\cdots\times S_r$, a contradiction.

\end{proof}

\noindent\textbf{Note added in proof:} After posting the paper in the arXiv we learned from Oleg Pikhurko that Ostap Chervak \cite{Chervak} proved
independently some of our results including Theorem 1.2 for $r=2$ and Theorem 1.3 with similar parameters.
We thank Oleg for pointing this out.

\section*{Acknowledgements}
\noindent The research of Noga Alon was supported by NSF grant DMS-1855464, ISF grant 281/17, BSF grant 2018267 and the Simons Foundation.
The research of Chong Shangguan and Itzhak Tamo was supported by ISF grant No. 1030/15 and NSF-BSF grant No. 2015814.

\bibliographystyle{plain}

\bibliography{solvability}

\appendix
\section{Proof of Lemma \ref{saturated}}
\label{sec:proof_lem_saturated}

\noindent The proof is a standard application of the alteration method, (see e.g. Chapter 3 of \cite{probmethod}).
Construct an $n\times 2l$ $q$-ary matrix $M$ by picking each entry independently and  uniformly at random   from $[q]$.
We say that $S$, a subset of $t$ columns of $M$, is a bad $t$-tuple if there exists no row whose restriction to $S$ is onto $[q]$, i.e., for any $r\in [n]$
$$r(S):=\{M_{r,s}:s\in S\}\neq [q],$$
where $M_{r,s}$ is the entry of $M$ in row $r$ and column $s$.
For part $(i)$, let $S$ be a subset of size $q$, then for any $r$

$$\Pr[r(S)\neq[q]]=1-\fr{q!}{q^q}, \text{ hence } \Pr[S~is~bad]=(1-\fr{q!}{q^q})^n.$$

\noindent Therefore the expected number of bad $q$-tuples is bounded from above by $(2l)^q(1-\fr{q!}{q^q})^n$,
and there exists a matrix  with at most this many bad $q$-tuples.
Delete one column from each bad $q$-tuple of the matrix.
Clearly, if $(2l)^q(1-\fr{q!}{q^q})^n\le l$ then the resulting matrix is $q$-saturated with at least $l$ columns, as desired.

The proof of part $(ii)$ is similar.
Let $S$ be a fixed subset of $t\geq q$ columns of $M$, then for any $r\in[n]$

$$\Pr[r(S)\neq[q]]=\Pr[\exists~a\in[q],~s.t.~a\not\in r(S)]\le q(1-\fr{1}{q})^t.$$

\noindent Note that the bound for the probability of a bad $t$-tuple is different from the one given in the proof of part $(i).$
Thus, $$\Pr[S~is~bad]\le(q(1-\fr{1}{q})^t)^n=q^n(1-\fr{1}{q})^{tn},$$
\noindent and the expected number of bad $q$-tuples is bounded from above by $(2l)^tq^n(1-\fr{1}{q})^{tn}.$
The rest of the proof is identical to the proof of part $(i)$, and therefore is omitted.


\section{Proof of Theorem \ref{trees}}
\label{sec:proof_thm_trees}

\begin{proof}[\textbf{Proof of Theorem \ref{trees}}]
Assume $G$ has $n$ vertices $v_1,\ldots,v_n$, with $\deg(v_1)=1$ and $v_2$ is the only neighbor of $v_1$.
Let the set of $q$ colors be $[q]$.
For $1\le i\le n$, let $f_i:\mathbb{R}^{|N_i|}\rightarrow\mathbb{R}$ be the function which represents the guessing strategy of $v_i$, where $N_i$ is the set of neighbors of $v_i$ in $G$.
By (\ref{guessingstrategy})
$$F(x)=\prod_{i=1}^n\big(x_i-f_i(x)\big)$$
vanishes on $[q]^n$.
By our assumption on $v_1$,  $F(x)$ can be written as

$$F(x)=\big(x_1-f_1(x_2)\big)\big(x_2-f_2(x_1,x_{i_1},\ldots,x_{i_d})\big)H(x_2,\ldots,x_n),$$

\noindent where $ I:=\{i_1,\ldots,i_d\}$ is the set of indices for which $v_{i_1},\ldots,v_{i_d}$ are the neighbors of $v_2$ in $G\setminus\{v_1\}$ and $H(x_2,\ldots,x_n)=\prod_{i=3}^n\big(x_i-f_i(x)\big)$ does not depend on $x_1$.
The result will follow by constructing a guessing strategy $f'_2(x_{i_1},\ldots,x_{i_d})$ for $v_2$ in the graph $G\setminus\{v_1\}$ such that

\begin{equation}\label{fomulatree2}
	F'(x_2,\ldots,x_n)=\big(x_2-f'_2(x_{i_1},\ldots,x_{i_d})\big)H(x_2,\ldots,x_n)=0
\end{equation}
	
\noindent for any $(x_2,\ldots,x_n)\in [q]^{n-1}$.
	
For a function $f:\mathbb{R}^n\rightarrow \mathbb{R}$, let $Z(f)=\{x\in \mathbb{R}^n:f(x)=0\}$ be its zero set, and denote $A=\{(a,0,\ldots,0):a\in[q]\}$.
If $[q]^{n-1}\s Z(H)$ then there is nothing to prove, since for any function $f'_2$ \eqref{fomulatree2} holds.
Otherwise, for any $(a_2,\ldots,a_n)\in [q]^{n-1}\backslash Z(H)$ the set $A+(0,a_2,\ldots,a_n)=\{(a,a_2,\ldots,a_n):a\in[q]\}$ satisfies
$$\big(A+(0,a_2,\ldots,a_n)\big)\s\cup_{i=1}^2Z(x_i-f_i).$$
Indeed, since $F(x)$ vanishes on $[q]^n$ then $\big(A+(0,a_2,\ldots,a_n)\big)\s\cup_{i=1}^2Z(x_i-f_i)\cup Z(H)$.
We claim that $\big(A+(0,a_2,\ldots,a_n)\big)\cap Z(H)=\emptyset$.
Otherwise, if $(a_1,a_2,\ldots,a_n)\in Z(H)$ for some $a_1\in [q]$ then $H(a_2,\ldots,a_n)=0$, which is a contradiction.
Observe also that for any set $A+(0,a_2,\ldots,a_n)$ the following holds

$$\big(A+(0,a_2,\ldots,a_n)\big)\cap Z(x_1-f_1)=\big(f_1(a_2),a_2,\ldots,a_n\big),$$

\noindent hence if  $(a_2,\ldots,a_n)\not\in Z(H)$ then

\begin{equation}\label{fomulatree3}
	\begin{aligned}
	|\big(A+(0,a_2,\ldots,a_n)\big)\cap Z(x_2-f_2)|\ge q-1.
	\end{aligned}
\end{equation}
	
Consider the set $\ma{C}=[q]^{n-1}\backslash Z(H)$.
A guessing strategy $f'_2$ which satisfies $\ma{C}\s Z(x_2-f'_2)$ is sufficient for (\ref{fomulatree2}) to hold, since then   $Z(x_2-f'_2)\cup Z(H)=[q]^{n-1}$.	
The value of  $f'_2(x_{i_1},\ldots,x_{i_d})$ at the point $(a_{i_1},\ldots,a_{i_d})$ is defined as follows.

\begin{itemize}
		\item [$(i)$] If there exists $(a_2,\ldots,a_n)\in\ma{C}$ such that $(a_2,\ldots,a_n)|_I=(a_{i_1},\ldots,a_{i_d})$, set $f'_2(a_{i_1},\ldots,a_{i_d})=a_2$;
		\item [$(ii)$] Otherwise, set $f'_2(a_{i_1},\ldots,a_{i_d})\in[q]$ arbitrarily.
\end{itemize}
		
It remains to verify that $(1)$ $f'_2$ is a well-defined function; $(2)$ it satisfies $Z(x_2-f'_2)\cup Z(H)=[q]^{n-1}$.
For $(1)$ assume by contradiction that it is not well-defined, and $\ma{C}$ contains two vectors $(a_2,a_3\ldots,a_n),(a'_2,a'_3,\ldots,a'_n)\in[q]^{n-1}$ satisfying $a_{i} = a'_{i}$ for $i \in I$ and $a_2\neq a'_2$.
Let
$$\Lambda=\{a\in[q]: (a,a_2,\ldots,a_n)\in \big(A+(0,a_2,\ldots,a_n)\big)\cap Z(x_2-f_2)\}$$
and
$$\Lambda'=\{a\in[q]: (a,a_2',\ldots,a_n')\in \big(A+(0,a_2',\ldots,a_n')\big)\cap Z(x_2-f_2)\}.$$
By (\ref{fomulatree3})  $\Lambda,\Lambda'\subseteq [q]$  are of  size at least $q-1$, therefore $\Lambda\cap\Lambda'\neq\emptyset$.
Let $a_1\in\Lambda\cap\Lambda'\s [q]$ be an arbitrary element.
By definition, $Z(x_2-f_2)$ contains both $(a_1,a_2,a_3,\ldots,a_n)$ and $(a_1,a'_2,a'_3,\ldots,a'_n)$, which implies
$a_2=f_2(a_1,a_{i_1},\ldots,a_{i_d})=f_2(a_1,a'_{i_1},\ldots,a'_{i_d})=a'_2$, which is a contradiction and hence $f'_2$ is well-defined.
For $(2)$ if $(a_2,\ldots,a_n)\in\ma{C}$, $(i)$ implies that $a_2=f'_2(a_{i_1},\ldots,a_{i_d})$, or equivalently, $(a_2,\ldots,a_n)\in Z(x_2-f'_2)$, as desired.
We conclude that \eqref{fomulatree2} holds and the polynomials  $f'_2, H$ form a proper guessing strategy for the graph $G\setminus\{v_1\}$.
\end{proof}

It would be interesting to extend the proof scheme to more general cases, say, $2$-degenerate graphs.
 \end{document}